\numberwithin{equation}{section}
\numberwithin{figure}{section}
\theoremstyle{plain}
\newtheorem{thm}{\protect\theoremname}[section]
\theoremstyle{remark}
\newtheorem{rem}[thm]{\protect\remarkname}
\theoremstyle{plain}
\newtheorem{lem}[thm]{\protect\lemmaname}
\theoremstyle{definition}
\newtheorem{problem}{\protect\problemname}[section]
\newcommand{\Rmnum}[1]{\expandafter\@slowromancap\romannumeral#1@}\makeatother%%%----------------------------------------------------------------------------
\numberwithin{equation}{section}
\newcommand{\norm}[1]{\left\Vert#1\right\Vert}
\newcommand{\set}[1]{\left\{#1\right\}}
\newcommand{\pr}[1]{\left(#1\right)}
\newcommand{\defs}{:=}
\newcommand{\me}{\mathrm{e}}
\newcommand{\dif}{\mathrm{d}}
\DeclareSymbolFont{lettersA}{U}{pxmia}{m}{it}
\DeclareMathSymbol{\piup}{\mathord}{lettersA}{"19}
\newcommand{\Real}{\mathbb R}
\newcommand{\mr}[1]{\mathrm{#1}}
\newcommand{\mf}[1]{\mathscr{#1}}
\newcommand{\mcc}{\mathcal{C}}
\providecommand{\lemmaname}{Lemma}
\providecommand{\problemname}{Problem}
\providecommand{\remarkname}{Remark}
\providecommand{\theoremname}{Theorem}
\begin{document}
\title[Steady Shocks for Reacting Euler Flows]{Transonic Shocks for 2-D Steady Exothermically Reacting Euler Flows in a Finite Nozzle}

%\author[B.X. Fang]{Beixiang Fang}
\author{Beixiang Fang}
\author{Piye Sun}
\author{Qin Zhao}

\address{B.X. Fang: School of Mathematical Sciences, MOE-LSC, and SHL-MAC,
Shanghai Jiao Tong University, Shanghai 200240, China }
\email{\texttt{bxfang@sjtu.edu.cn}}

\address{P.Y. Sun: School of Mathematical Sciences,
	Shanghai Jiao Tong University, Shanghai 200240, China }
\email{\texttt{sunpiye@amss.ac.cn}}

\address{Q. Zhao: Department of Mathematics, School of Science, Wuhan University
of Technology, Wuhan, 430070, China}
\email{\texttt{qzhao@whut.edu.cn}}

\keywords{reacting Euler system, transonic shocks, free boundary, receiver pressure, stability}
\subjclass[2010]{35A02, 35L65, 35L67, 35Q31, 76L05, 76N10, 76N17}

\date{\today}

\begin{abstract}
This paper concerns the existence of transonic shocks for steady exothermically reacting Euler flows in an almost flat nozzle with the small rate of the exothermic reaction. One of the key points is to quantitatively determine the position of the shock front in the nozzle.
%For steady horizontal flows, the existence of normal shocks with the position of the shock front being arbitrary in the nozzle can be easily established.
%In this paper,
We focus on the contributions of the perturbation of the flat nozzle and the exothermic reaction in determining the position of the shock front by comparing the orders of $\sigma$ and $\kappa$, where $\sigma$ represents the scale of the perturbation of the flat nozzle and $\kappa$ the rate of the exothermic reaction.
To this end, a free boundary problem for the linearized reacting Euler system is proposed to catch an approximating position of the shock front as well as the associated approximating shock solution, with which the existence of a shock solution close to it can be established via a nonlinear iteration scheme.
One of the key steps is solving the nonlinear equation derived from the solvability condition for the elliptic sub-problem in the domain of the subsonic flow behind the shock front, which determines the free boundary of the proposed problem.
Four typical cases are analyzed which describe possible interactions between the geometry of the nozzle boundary and the exothermic reaction.
%, whose solution gives the information of approximating positions of the shock front. Once the initial approximation of the shock solution is obtained, a nonlinear iteration scheme can be constructed to show that the position of the shock front is a small perturbation of the initial approximation.
%Finally, we establish a quantitative expression for the shock solution, under the conditions that the receiver
%pressure on the exit lies in an appropriate scope and the parameters $\sigma$ and $\kappa$ are sufficiently small, particularly for $\sigma$ being a higher order term of $\kappa$.
The results also manifest that exothermic reaction has a stabilization effect on transonic shocks in the nozzles.
%in consideration of previous works have shown that transonic shocks in purely steady Euler flows are not stable in such nozzles.
\end{abstract}

\maketitle
\tableofcontents

\section{Introduction}
In this paper we are concerned with the transonic shocks for 2-D steady exothermically reacting Euler flows in an almost flat nozzle.
Assume that a supersonic flow enters a 2-D nozzle, in which exothermic reaction occurs, and leaves it with a relatively high pressure, then it is expected that a shock front appears in the flow field such that the pressure rises to coincide with the value at the exit.
Then catching the position of the shock front is one of the most important ingredients in determining the flow field within the nozzle.
For gas flows without exothermic reactions, Courant and Friedrichs have pointed out in \cite{CourantFriedrichs1948} that, in order to determine the position of the shock front, additional conditions are needed at the exit of the nozzle and they propose to use the pressure condition.
This fact has been verified by rigorous mathematical analysis, for instance, see \cite{Chen_S2009CMP,FangXin2021CPAM,LiXinYin2013ARMA} and references therein.
As the exothermic reaction is involved in the flow field, it is natural to ask, in catching the position of the shock front, whether the pressure condition at the exit is still sufficient or not, and what the effects the exothermic reaction brings.
This paper is devoted to study these problems.

\subsection{Mathematical Formulation of the Problem.}

Let $ \varphi_w(x_1) $ be a smooth function and
\begin{equation*}
	\mathscr{D} = \{ (x_1,x_2): 0 \leq x_1 \leq L, 0 \leq x_2 \leq \varphi_w(x_1) \}
\end{equation*}
be the domain bounded by the nozzle (See Figure \ref{fig:domain}).
Denote the entrance, the exit, the lower boundary and the upper boundary of the nozzle by, respectively,
\begin{equation*}
	\begin{split}
		E_1 &= \{(x_1,x_2): x_1=0,x_2 \in [0,1]\},\\	
		E_2 &= \{(x_1,x_2): x_1=L,x_2 \in [0, \varphi_w (L)] \},\\
		W_1 &= \{(x_1,x_2): x_1 \in [0,L], x_2=0 \},\\
		W_2 &= \{(x_1,x_2): x_1 \in [0,L], x_2=\varphi_w(x_1) \}.
	\end{split}
\end{equation*}
\begin{figure}[htbp]
	\centering
	\includegraphics[height=5cm, width=11cm]{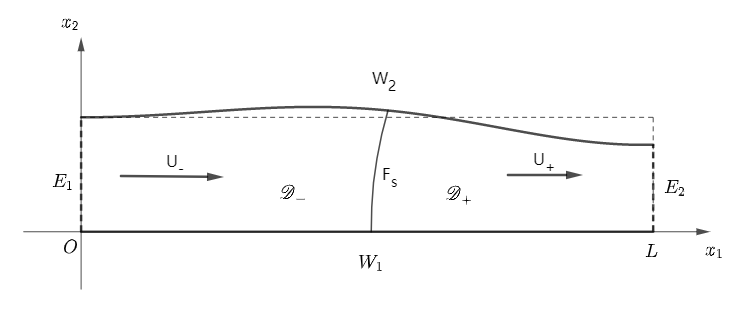}
	\caption{A transonic flow with a single shock in an almost flat nozzle.}
	\label{fig:domain}
\end{figure}

The steady exothermically reacting flow in the nozzle is governed by the two-dimensional Euler system with a combustion term:
\begin{equation}\label{Euler}
	\begin{cases}
		\partial_{x_1}(\rho u_1) + \partial_{x_2}(\rho u_2)=0,
		\\\partial_{x_1} (\rho u_1^2 +p) + \partial_{x_2}( \rho u_1 u_2)=0,
		\\ \partial_{x_1}(\rho u_1 u_2) + \partial_{x_2} ( \rho u_2^2 +p)=0,
		\\ \partial_{x_1}\left(\rho u_1 \left(E+\displaystyle\frac{p}{\rho}\right)\right)+\partial_{x_2}\left(\rho u_2 \left(E+\displaystyle\frac{p}{\rho}\right)\right)=0,
		\\ \partial_{x_1}(\rho u_1 Z) + \partial_{x_2}(\rho u_2 Z)=-\kappa \rho \phi(T)Z.
	\end{cases}
\end{equation}
Here $(u_1, u_2)$ represents the horizontal and vertical component of the velocity
and $(p, \rho, T)$ stands for the pressure, density and temperature, respectively. $Z \in [0,1]$ represents the fraction of unburned gas in the mixture.
$\kappa>0$ is the rate of the exothermic reaction.
Typically, the ignition function $\phi (T)$ has the Arrhenius form,
\begin{align*}
	\phi(T)=
	\begin{cases}
		0, &\text{if $T<T_0$}, \\
		T^{a}\me^{-\frac{\mathcal{E}}{\mathcal{R}_0 (T-T_0)}}, &\text{if $T \geq T_0$},
	\end{cases}
\end{align*}
where $a$ is a positive constant, $\mathcal{R}_0$ is the universal gas constant,  $T_0$ is the ignition temperature and $\mathcal{E}$ is activation energy for the gas.
$E$ represents the total energy
\begin{equation}
	E=\frac{1}{2}(u_1^2+u_2^2)+e+{q_{\mr{e}}} Z,
\end{equation}
where $e$ is the internal energy of the gas, and the constant ${q_{\mr{e}}}$ is the binding energy of unburned gas. In particular, for polytropic gases with the adiabatic exponent $\gamma$, the pressure and the internal energy can be represented by
\begin{equation}
	p=\rho \mathcal{R} T=(\gamma-1) \me ^{\frac{S}{c_{\mr{v}}}}\rho^\gamma, \qquad\text{or }\qquad e=\frac{1}{\gamma-1} \frac{p}{\rho},
\end{equation}
where $S$ is the entropy, $c_{\mr{v}}$ is the specific heat at constant volume and $\mathcal{R}$ is a constant which is the universal gas constant $\mathcal{R}_0$ divided by the effective molecular weight of the gas. Then the sound speed is given by $c=\sqrt{{\gamma p}/{\rho}}$.
In this paper we shall choose $U=(p,\theta,q,S,Z)^\top$ to represent the independent parameters of the fluid,
where $\theta=\arctan \displaystyle\frac{u_2}{u_1}$ and $q=\sqrt{u_1^2+u_2^2}$ stands for the velocity direction and the speed of the fluid. Recall that the flow is supersonic in case that the Mach number $M\defs q/c>1$, and it is subsonic if $M<1$.

In this paper, a supersonic flow will be given at the entrance of the nozzle, in which exothermic reaction occurs. And the goal is to clarify whether or not there exists a transonic shock front across which the flow will become subsonic downstream with a given pressure at the exit.
As the notation shown in Figure \ref{fig:domain}, the subscript ``$ - $'' will represent the parameters of the flow ahead of the shock front and the subscript ``$ + $'' behind the shock front.
For a shock front at the position $x_1=\varphi_s(x_2)$, the domain $\mathscr{D}$ is seperated into two parts: the upstream domain
\begin{align*}
	\mathscr{D}_- = \{(x_1,x_2): 0 < x_1 < \varphi_s(x_2), 0 < x_2 < \varphi_w(x_1) \},
\end{align*}
and the downstream domain
\begin{align*}
	\mathscr{D}_+ &= \{(x_1,x_2): \varphi_s(x_2) < x_1 < L, 0 < x_2 < \varphi_w(x_1) \}.
\end{align*}
Let $U_-=(p_-,\theta_-,q_-,S_-,Z_-)^\top$ and $U_+=(p_+,\theta_+,q_+,S_+,Z_+)^\top$ be the state of the upstream and downstream flow in $\mathscr{D}_-$ and $\mathscr{D}_+$, respectively.
Since a discontinuous jump will occur at the shock front, the following Rankine-Hugoniot conditions (which will be abbreviated as R-H conditions) should be satisfied on the shock:
\begin{equation}\label{eq:RH}
	\begin{cases}
		[\rho u_1]= \varphi_s'(x_2) [\rho u_2],
		\\ [\rho u_1^2 +p] = \varphi_s'(x_2) [\rho u_1 u_2],
		\\ [\rho u_1 u_2]=\varphi_s'(x_2) [\rho u_2^2 +p],
		\\ \left[\rho u_1 \left(E+\displaystyle\frac{p}{\rho}\right)\right]=\varphi_s'(x_2) \left[ \rho u_2 \left(E+\displaystyle\frac{p}{\rho}\right) \right],
		\\ [\rho u_1 Z]=\varphi_s'(x_2) [\rho u_2 Z],
	\end{cases}
\end{equation}
where $\displaystyle \varphi_s'(x_2)=\frac{\dif \varphi_s}{\dif x_2}(x_2)$ and $[\cdot]$ stands for the jump of the corresponding quantity across the shock front, i.e. $[w]=w_+ - w_-$.

Finally, on the nozzle boundaries $W_1$ and $W_2$, it is assumed that the fluid cannot penetrate them, which yields the following slip boundary conditions:
\begin{equation}\label{eq:slip_bdry_cond}
	\begin{aligned}
		& \theta_{\pm} = 0,& &\text{ on }W_1 \cap \overline{\mathscr{D}_\pm},\\
		& \tan \theta_{\pm}  = \varphi_w'(x_1),& &\text{ on }W_2 \cap \overline{\mathscr{D}_\pm}.
	\end{aligned}
\end{equation}

Hence, the problem for the existence of the transonic shock for the steady exothermic reacting flow in a nozzle could be formulated as the following free boundary problem.

\begin{problem}\label{pr:SP} %[Existence of transonic shocks for steady exothermical reacting flows in a nozzle.]	
	Given a supersonic state $ U_-=U_{\mr{in}}(x_2) $ at the entrance $ E_{1} $, and a relatively high pressure $ p_+=P_{\mr{out}}(x_2) $ at the exit $ E_{2} $, whether or not there exists a shock solution $ (U_-, U_+) $ in $ \mf{D} $ to the 2-D steady reacting Euler system \eqref{Euler} for the  exothermically reacting flow, with the position of the shock front being
	\begin{equation*}
		F_s\defs\set{(x_1,x_2)\in\Real^2: x_1 = \varphi_s(x_2), x_2 \in [0,Y_s]},
	\end{equation*}
	where $\varphi_s(x_2)$ is an unknown function and $(Y_s, \varphi_s (Y_s))$ is the intersection of the shock-front and the upper boundary, i.e. $Y_s=\varphi_w(\varphi_s(Y_s))$,
	such that the R-H conditions \eqref{eq:RH} are satisfied on $ F_s $, and the boundary condition \eqref{eq:slip_bdry_cond} holds on $W_1$ and $W_2$ (see Figure \ref{fig:domain}).
\end{problem}

The reacting Euler system \eqref{Euler} is a nonlinear system of mixed type partial differential equations which is hyperbolic in the supersonic region $\mathscr{D}_-$ and elliptic-hyperbolic composite in the subsonic region $\mathscr{D}_+$. Moreover, the location of the shock front is unknown so that $\mathscr{D}_-$ and $\mathscr{D}_+$ are both undetermined domains connected by a free boundary.
Not only the state of incoming flow and the pressure at the exit, the shape of the boundary of the nozzle and exothermic reactions can also influence the location of the shock front.
In this paper, we focus on perturbation problems which involve a small rate of exothermic reactions and a small perturbation of the boundary of a flat nozzle. Hence, firstly we introduce the steady normal shock solutions without exothermic reactions in a flat nozzle as the background solution.

Let $\mathscr{D}_0$ be a flat nozzle with length $L$ and height $1$:
\begin{equation*}
	\mathscr{D}_0=\{ (x_1,x_2): 0 \leq x_1 \leq L, 0 \leq x_2 \leq 1 \}.
\end{equation*}
For a given uniform supersonic state $\bar{U}_- = (\bar{p}_-, 0, \bar{q}_-, \bar{S}_-, \bar{Z})^\top$ with the temperature $\bar{T}_->T_0$ ,
it can be easily verified that there exists a unique subsonic state $\bar{U}_+ = (\bar{p}_+, 0, \bar{q}_+, \bar{S}_+, \bar{Z})^\top$ such that
\begin{equation}\label{RH}
	\begin{cases}
		[\bar{\rho} \bar{q}]=0,
		\\ [\bar{\rho} \bar{q}^2 + \bar{p}]=0,
		\\ \left[\bar{E}+\displaystyle\frac{\bar{p}}{\bar{\rho}}\right] =0,
		\\ [\bar{Z}]=0.
	\end{cases}
\end{equation}
That is, $ \bar{U}_+ $ can be connected to $\bar{U}_-$ through a plane normal shock.

\begin{rem}
 If the temperature $\bar{T}_-$ of the incoming flow is lower than the ignition temperature $T_0$, the gas ahead of the shock front will be unburnt. Since the temperature increases across the shock, if the temperature $\bar{T}_+$ across the shock front is greater than the ignition temperature $T_0$, combustion reactions will take place behind the shock front. The combustion process with its precompression shock wave is usually called a detonation wave(see \cite{CourantFriedrichs1948}, Section 92). For this model,  similar results can be established by the same arguments in this paper. For simplicity of presentation, we focus on the case that the combustion reaction occurs in the whole nozzle.
\end{rem}

Let $ U_{\mr{in}}(x_2)\equiv\bar{U}_- $ and $ P_{\mr{out}}(x_2)\equiv\bar{p}_+ $. Then, for any $\bar{x}_s\in [0,L]$, as $ \bar\varphi_w(x_1)\equiv 1 $ and $ \kappa = 0 $, $ \pr{\bar{U}_-;\ \bar{U}_+;\ \bar{x}_s} $ consists a steady plane normal shock solution to Problem \ref{pr:SP} in the sense that
\begin{equation}\label{eq:normal-shock}
	\bar{U}(x_1,x_2)\defs
	\begin{cases}
		\bar{U}_-, & \text{ for }0 < x < \bar{x}_s,\, 0 < y < 1,\\
		\bar{U}_+, & \text{ for }\bar{x}_s < x < L,\, 0 < y < 1.
	\end{cases}
\end{equation}

Without loss of generality, we may assume
\begin{equation}
	\bar{\rho}_- \bar{q}_- = \bar{\rho}_+ \bar{q}_+ =1.
\end{equation}

Based on the steady normal shock solution \eqref{eq:normal-shock}, this paper is going to solve Problem \ref{pr:SP} with small perturbed boundary data and sufficiently small $ \kappa>0 $.

Assume that $\varphi_w (x_1)$ is a small perturbation of $\bar{\varphi}_w (x_1)$ with the form:
\begin{equation}\label{perturb2}
	\varphi_w (x_1) := 1 + \int_0^{x_1} \tan (\sigma\Theta(s)) ds,
\end{equation}
and the pressure $ P_{\mr{out}}(x_2) $ at the exit is a small perturbation of $ \bar{p}_+ $ with the form:
\begin{equation}\label{perturb1}
	P_{\mr{out}} (x_2) := \bar{p}_+ + P_\mr{e}(x_2; \sigma, \kappa),
\end{equation}
where $ \sigma, \kappa>0 $ are sufficiently small, $P_{\mr{e}}(\cdot ; \sigma, \kappa) \in C^{2,\alpha}(\overline{\mathbb{R}_+})$,  $\Theta \in C^{2,\alpha}[0,L]$ are given functions with $0 < \alpha < 1$, satisfying
\begin{equation}\label{perturb3}
	\norm{P_\mr{e}(\cdot ; \sigma, \kappa)}_{C^{2,\alpha}(\overline{\mathbb{R}_+})} < + \infty,  \qquad
	\| \Theta \| _{C^{2,\alpha}[0,L]} < + \infty,
\end{equation}
\begin{equation}\label{perturb4}
	\Theta(0)=\Theta'(0)=\Theta''(0)=0.
\end{equation}

In this paper, we are going to establish the existence of the shock solution to Problem \ref{pr:SP} with the detailed boundary data as below.

\begin{problem}\label{pr:SmallPerturbation}
	Assume that the upper boundary of the nozzle is described by $ \varphi_w (x_1)$ in \eqref{perturb2}. Let $ U_{\mr{in}}(x_2)\equiv\bar{U}_- $ and $ P_{\mr{out}}(x_2) $ be given as \eqref{perturb1}.
	Then try to determine a transonic shock solution $ \pr{ U_-(x_1,x_2);\ {U}_+(x_1,x_2);\ \varphi_s(x_2)} $ (see Figure \ref{fig:domain}) to Problem \ref{pr:SP} in the sense that:
	\begin{enumerate}
		\item The location of the shock-front is
		\begin{equation}
			F_s = \{(x_1,x_2): x_1 = \varphi_s(x_2), x_2 \in [0,Y_s] \},
		\end{equation}
		where $(Y_s, \varphi_s (Y_s))$ is the intersection of the shock-front and the upper boundary, i.e. $Y_s=\varphi_w(\varphi_s(Y_s))$.
		Then the domain $\mathscr{D}$ is separated by $ F_s $ into two parts:
		\begin{equation}
			\begin{split}
				\mathscr{D}_- &= \{(x_1,x_2): 0 < x_1 < \varphi_s(x_2), 0 < x_2 < \varphi_s(x_1) \},\\
				\mathscr{D}_+ &= \{(x_1,x_2): \varphi_s(x_2) < x_1 < L, 0 < x_2 < \varphi_s(x_1) \},
			\end{split}		
		\end{equation}
		where $ \mf{D}_- $ is the region of the supersonic flow ahead of the shock front, and $ \mf{D}_+ $ is the region of the subsonic flow behind it.
		\item $U_-(x_1,x_2)$ and $U_+(x_1,x_2)$ satisfy the reacting Euler system \eqref{Euler} in the sense of classical solutions in the domain $\mathscr{D}_-$ and $\mathscr{D}_+$ respectively.
		\item The state at the entrance coincides with the uniform supersonic state:
		\begin{equation}
			U_- = \bar{U}_-, \quad \text{on $E_1$},
		\end{equation}
		and the pressure at the exit is the given receiver pressure:
		\begin{equation}
			p_+ = \bar{p}_+ + P_\mr{e}(x_2; \sigma, \kappa), \quad \text{on $E_2$}.
		\end{equation}
		\item The slip condition holds along the boundary $W_1$ and $W_2$:
		\begin{align}
			&\theta_-(x_1) = 0, & \text{on } &W_1 \cap \overline{\mathscr{D}_-}, \\
			&\theta_-(x_1) = \sigma\Theta(x_1), &\text{on } &W_2 \cap \overline{\mathscr{D}_-}, \\
			&\theta_+(x_1) = 0, &\text{on } &W_1 \cap \overline{\mathscr{D}_+}, \\
			&\theta_+(x_1) = \sigma\Theta(x_1), &\text{on } &W_2 \cap \overline{\mathscr{D}_+}.
		\end{align}
		\item	The R-H conditions \eqref{eq:RH} hold along the shock-front $F_s$.
	\end{enumerate}
\end{problem}

Notice that the position of the normal steady shock $\bar{x}_s$ in \eqref{eq:normal-shock} can be anywhere in $(0,L)$ so that the background solutions \eqref{eq:normal-shock} cannot provide any information for the location of the shock front in Problem \ref{pr:SmallPerturbation}. Hence one of the key difficulties is to determine the location of the shock front.
Motivated by Fang and Xin's work \cite{FangXin2021CPAM}, a linearized problem will be introduced first and the position of an approximating shock front can be obtained by the solvability of an elliptic sub-problem in the subsonic region(see \cite{FangXin2021CPAM,Grisvard1985}).
The pressure at the exit $P_\mr{out}(x_2;\sigma, \kappa)$ should be chosen properly such that the shocks exist for small $\sigma$ and $\kappa$.
For explicitness of the argument, we assume that $P_\mr{e}$ has the form
\begin{align}
	P_\mr{e}(x_2;\sigma, \kappa) = \sigma P_\sigma (x_2) + \kappa P_\kappa (x_2),
\end{align}
and four typical cases will be considered:
\begin{enumerate}
	\item the perturbation of the boundary has the main effects compared with the exothermic reaction: $\kappa= A_1 \sigma^s$ with $s>1$ and $A_1>0$. Then $P_\mr{e}$ has the form $P_\mr{e}(x_2;\sigma, \kappa) = \sigma P_\sigma(x_2)+ \sigma^s A_1 P_\kappa(x_2)$.
	\item the exothermic reaction has the main effects compared with the perturbation of the boundary: $\sigma= A_2 \kappa^s$ with $s>1$ and $A_2>0$. Then $P_\mr{e}$ has the form $P_\mr{e}(x_2;\sigma, \kappa) = \kappa P_\kappa(x_2)+ \kappa^s A_2 P_\sigma(x_2)$.
	\item the influence of the perturbation of the boundary and the exothermic reaction is at the same level: $\sigma=A \kappa$ for some constant $A>0$ so that $P_\mr{e}$ can be rewritten into $P_\mr{e}(x_2;\sigma, \kappa)=\kappa P_A(x_2)$, where $P_A(x_2)= A P_\sigma(x_2) + P_\kappa(x_2)$.
	\item $\big(P_{\sigma}(x_2), P_{\kappa}(x_2)\big)$ satisfies a special condition such that the initial approximating location of the shock front is independent of $\sigma$ and $\kappa$.
\end{enumerate}

For each case, the range of a proper pressure is analysed. Then with a proper pressure, an approximating shock solution can be solved from the linearized problem.
Based on the approximation of the shock solution, a nonlinear iteration scheme will be designed and executed to approach a shock solution to Problem \ref{pr:SmallPerturbation}.
In this paper, the existence of the shock solution to Problem \ref{pr:SmallPerturbation} will be established by showing the following theorem.
\begin{thm}
	Given $\Theta$ and a proper pressure $P_\mr{e}(\cdot; \sigma, \kappa)$ at the exit satisfying one of Perturbation Hypotheses defined in Section \ref{section: main theorem}, there exists a sufficiently small constant $\epsilon_0>0$, such that for any positive $\kappa$ and $\sigma$ satisfying $\kappa+\sigma < \epsilon_0$, there exists a transonic shock solution $(U_-;U_+;\varphi_s)$ to Problem \ref{pr:SmallPerturbation} such that $(U_-;U_+)$ is a small perturbation of $(\bar{U}_-, \bar{U}_+)$.
\end{thm}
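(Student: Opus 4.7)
The plan is to construct the transonic shock in two layers, following the strategy of Fang and Xin \cite{FangXin2021CPAM}. First, linearize the reacting Euler system around the background normal shock $(\bar{U}_-,\bar{U}_+;\bar{x}_s)$ and build an approximating shock solution whose free-boundary position $x_s^{\ast}$ is fixed by the solvability condition of an over-determined elliptic sub-problem in $\mathscr{D}_+$. Second, use this approximating solution as the base point of a nonlinear Picard-type iteration and obtain the true solution as its fixed point.

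For the first layer, the supersonic flow in $\mathscr{D}_-$ is obtained as an $O(\sigma+\kappa)$ perturbation of $\bar{U}_-$ by a standard characteristic argument using the entrance datum $\bar{U}_-$ and the slip conditions on $W_1\cup W_2$. In $\mathscr{D}_+$ the linearization at $\bar{U}_+$ decouples: the perturbations of $S$ and $Z$ solve transport equations along the horizontal background streamlines, driven by the R-H jumps on the trial shock $F_s$ and by the source $-\kappa\bar{\rho}\phi(\bar{T}_+)\bar{Z}$, while $(\delta p,\delta\theta)$ solves a first-order elliptic system. The R-H conditions supply Cauchy data on $F_s$, the slip conditions supply Neumann-type data on $W_1,W_2$, and the receiver pressure supplies Dirichlet data on $E_2$. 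Arguing as in \cite{Grisvard1985, FangXin2021CPAM}, this over-determined problem admits a solution if and only if a single scalar solvability relation
\begin{equation*}
\mathcal{F}(x_s;\sigma,\kappa)=0
\end{equation*}
is satisfied, where $\mathcal{F}$ is a weighted combination of integrals of $P_\mr{e}$, of $\Theta$ along the upper wall, and of the reaction source over $\mathscr{D}_+$.

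Locating $x_s^{\ast}$ is the heart of the argument. In each of the four Perturbation Hypotheses the leading behavior of $\mathcal{F}$ in $(\sigma,\kappa)$ has a different structure, and the admissible class of $P_\mr{e}$ is tailored precisely so that a unique $x_s^{\ast}\in(0,L)$ exists and $\partial_{x_s}\mathcal{F}\ne 0$ holds at the correct order. The implicit function theorem then yields $x_s^{\ast}=x_s^{\ast}(\sigma,\kappa)$ with quantitative control, and the linearized problem in turn produces the approximating subsonic state $U_+^{(0)}$.

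For the nonlinear layer, introduce an iteration on the ball
\begin{equation*}
\mathcal{N}_\delta=\{(U_-,U_+,\varphi_s):\ \|U_\pm-\bar{U}_\pm\|_{C^{2,\alpha}}+\|\varphi_s-x_s^{\ast}\|_{C^{2,\alpha}}\le C(\sigma+\kappa)\}.
\end{equation*}
One step consists of resolving the supersonic region by characteristics, solving the transport equations for $S_+,Z_+$ along the curved streamlines, and solving the linearized elliptic sub-problem for $(p_+,\theta_+)$ with coefficients frozen at the current state and the free boundary updated to $\varphi_s^{\mr{new}}$ through the solvability condition. Schauder estimates for first-order elliptic systems in Lipschitz domains, together with transport estimates along nearly horizontal streamlines, keep the iteration inside $\mathcal{N}_\delta$ and yield a contraction in a weaker norm; the fixed point is the shock solution claimed by the theorem. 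The main obstacle is the solvability analysis: the two small parameters interact nontrivially with the free boundary, and in case (iv) the pressure is tuned to cancel the leading contributions of $\sigma$ and $\kappa$ so that $x_s^{\ast}$ stays $O(1)$ as $\sigma,\kappa\to 0$. Verifying that the non-degeneracy $\partial_{x_s}\mathcal{F}\ne 0$ survives when the linearization is replaced by the full nonlinear system, so that the shock position can be updated consistently with the R-H conditions and the receiver pressure at every iteration step, is the most delicate part of the argument.
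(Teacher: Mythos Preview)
Your overall two-layer strategy---linearize, solve an over-determined elliptic sub-problem whose solvability condition pins down the shock location, then iterate nonlinearly---matches the paper. However, there are two substantive discrepancies worth flagging.

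First, the paper does \emph{not} work in the physical domain $\mathscr{D}$ but passes to Lagrangian coordinates $(y_1,y_2)$ at the outset. This flattens both nozzle walls (so the perturbed geometry enters only through the boundary datum $\theta=\sigma\Theta(y_1)$ on $\Gamma_4$) and reduces the transport of $S,Z,q$ to simple $\partial_{y_1}$-ODEs. Your proposal to integrate along ``nearly horizontal streamlines'' in the physical domain is in principle equivalent, but you then have to track the curved geometry of $W_2$ everywhere, and the shock becomes a graph over $x_2$ rather than over the streamline parameter. The paper's choice is not cosmetic: it is what allows the clean form of the solvability relation \eqref{R=P} and the explicit identification of $R_\sigma,R_\kappa$.

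Second, and more seriously, the function spaces you propose for the subsonic region will not close. You place the iteration in $C^{2,\alpha}$ and appeal to Schauder estimates for first-order elliptic systems. The subsonic domain $\Omega_+$ (or $\mathscr{D}_+$) has four right-angle corners where the shock and the exit meet the walls, and elliptic regularity in such domains does not yield H\"older second derivatives up to the boundary. The paper works instead with $W^1_\beta$, $\beta>2$, for $(p_+,\theta_+)$ and with a mixed norm $\|\cdot\|_{(\Omega_+;\Gamma_s)}$ that only asks for $W^{1-1/\beta}_\beta$ traces of $(q,S,Z)$ on the shock. The $W^1_\beta$ theory for the Cauchy--Riemann-type system in a rectangle (cf.\ \cite{Grisvard1985}) is what makes the estimates in Lemma~\ref{lem: 3.4} and the iteration in Section~4 go through. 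Relatedly, the iteration ball in the paper is not centered at the background with radius $C(\sigma+\kappa)$ as you write, but at the \emph{linearized} approximating solution $(\dot{U}_+,\dot{\psi}')$ with radius $\tfrac12(\sigma+\kappa)^{3/2}$; this higher-order centering is what produces a genuine contraction in the \emph{same} norm, not merely in a weaker one, and it is essential for extracting the refined estimates stated in Theorem~\ref{Main Theorem}.
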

The detailed version of the main theorem will be stated in Theorem \ref{Main Theorem}.

\subsection{Related Literatures.}

It is well-known that the study of gas flows in nozzles plays a fundamental role in aircraft engines, wind tunnels, rockets, etc.
For flows of general fluids such as polytropic gases, Courant and Friedrichs first gave a systematic analysis in \cite{CourantFriedrichs1948} from the viewpoint of nonlinear partial differential equations.
As shocks occur in the flow field, they point out that, the position of the shock front cannot be determined unless additional conditions are imposed at the exit and the pressure condition is suggested and preferred(see \cite[Page 373-374]{CourantFriedrichs1948}). Since then, in order to establish a rigorous mathematical theory for flows with shocks in a nozzle, various nonlinear PDE models and different boundary conditions have been proposed, fruitful ideas and methods had been developed, and substantial progresses had been made, for instance, see \cite{BaeFeldman2011ARMA, ChenFeldman2003JAMS, Chen_S2005TAMS, Chen_S2009CMP, ChenYuan2008ARMA, FangGao2021JDE, FangLiuYuan2013ARMA, FangXin2021CPAM, LiXinYin2010JDE, LiXinYin2013ARMA, LiuXuYuan2016AdM, ParkRyu_2019arxiv, ParkY_2021ARMA, WengXieXin2021SIMA, XinYanYin2011ARMA, XinYin2005CPAM, YuanZhao2020AcMath} and reference therein. In particular,
two typical kinds of nozzles are studied. One is an expanding nozzle of an angular sector or a diverging cone.
In \cite{CourantFriedrichs1948}, Courant and Friedrichs established the unique existence of a transonic shock solution in such a nozzle with given constant pressure at the exit. Based on this shock solution, the well-posedness of shock solutions in an expanding nozzle has been established in \cite{Chen_S2009CMP, LiXinYin2009CMP, LiXinYin2013ARMA}, with prescribed pressure at the exit as suggested by Courant and Friedrichs. The other is a flat nozzle with two parallel walls.
In this case, the existence of planar normal shock solutions can be easily established. However, the position of the shock front cannot be determined since it can be arbitrary in the flat nozzle. An idea to deal with this difficulty is presuming that the shock front goes through a fixed point which is given in advance artificially, and spontaneously replacing the pressure condition at the exit by other conditions, for instance, see \cite{ChenFeldman2003JAMS,Chen_S2005TAMS, XinYanYin2011ARMA,XinYin2005CPAM}.
Recently, in \cite{FangXin2021CPAM}, Fang and Xin proposed another idea to deal with this difficulty and successfully obtain the position of the shock front with the pressure condition at the exit, as suggested by Courant and Friedrichs. See also \cite{FangGao2021JDE, FangGao2022SIMA} for more applications of the idea.
%Recently, another idea to determine the position of the shock front with the pressure condition at the exit has been proposed in \cite{FangXin2021CPAM}.
%In this paper, we shall study the problem that, if exothermically reactions occur in the nozzle, .
Based on the methods and results developed in these literature, this paper studies the nozzle shock problem for steady exothermically reacting Euler flows.
One may refer to, for instance, \cite{CKZ2017, ChenWagner2003, XZZ2018, Zumbrun2017} for studies on weak solutions and detonation fronts of the exothermically reacting Euler system.

\subsection{Organization of the Paper.}

In Section 2, Problem \ref{pr:SmallPerturbation} is reformulated under the Lagrange transformation first.
Then the free boundary problem based on the background solution $(\bar{U}_-, \bar{U}_+)$ is introduced, whose solution gives the initial approximating position of the shock front as well as the associated approximating shock solution.
Main theorems are described in the last part of Section 2.
%Then we linearize the free boundary problem based on the background solution $(\bar{U}_-, \bar{U}_+)$. The approximating location of the shock front and the initial approximation of the transonic solution will be given by solving the linearized problem with assumptions on the perturbation of the boundaries and the exothermal reaction. The main theorem is stated in the last part of Section 2.
In Section 3, the free boundary problem introduced in Section 2 is solved and the existence of the solution is established.
Four typical cases are analyzed such that there exists a solution to the nonlinear equation derived from the solvability condition for the elliptic sub-problem for the downstream flows behind the shock front.
%In Section 3, we give the details of solving the linearized problem. The solvability condition from the elliptic system in downstream shows the relation between the approximating location of the shock front and pressure at the exit under the influence of the exothermic reaction.
In Section 4, a nonlinear iteration scheme is designed and carried out which converges to a shock solution of the reacting Euler system close to the approximation obtained in Section 3.
%In Section 4, we construct the nonlinear iteration with the initial approximating transonic solution. By proving the iteration mapping is contractive, we can get the fixed point of the iteration which is the solution of the nonlinear reacting Euler system.

\section{Lagrange Transformation and Main Theorems}

Since there are conserved quantities along streamlines in the Euler system, the Lagrange transformation, which straightens the streamlines, is often used to simplify the equations (cf. \cite{Chen_S2005TAMS,CourantFriedrichs1948}). Meanwhile, the perturbed boundary will become flat via Lagrange transformation because it is a streamline under the slip boundary condition. Hence Lagrange transformation will also be employed for the reacting Euler system in this paper.

\subsection{Reformulation of the Problem under Lagrange Transformation.}

From the first equation of \eqref{Euler}, there exists a function $\psi(x)$ such that $\nabla \psi = (-\rho u_2 , \rho u_1)$ and $\psi(0,0)=0$. Then the Lagrange transformation can be defined as
\begin{equation}\label{LTrans}
\begin{cases}
y_1 = x_1, \\
y_2 = \psi(x_1, x_2) := \int_{(0,0)}^{(x_1,x_2)}  \rho u_1(s,t) \dif t -\rho u_2(s,t)\dif s .
\end{cases}
\end{equation}
Under the transformation, the reacting Euler equations \eqref{Euler} becomes
\begin{align}
\label{LEuler1} &\partial_{y_1}\left(\frac{1}{\rho u_1}\right) - \partial_{y_2}\left(\frac{u_2}{u_1}\right)=0,\\
\label{LEuler2} &\partial_{y_1}\left(u_1 + \frac{p}{\rho u_1}\right) - \partial_{y_2}\left(\frac{p u_2}{u_1}\right)=0,\\
\label{LEuler3} &\partial_{y_1} u_2 + \partial_{y_2} p=0,\\
\label{LEuler4} &\partial_{y_1} \left( \frac{1}{2} q^2 +\frac{\gamma p}{(\gamma-1)\rho}+ {q_{\mr{e}}} Z \right)=0,\\
\label{LEuler5} &\partial_{y_1} Z=-\kappa \frac{\phi(T)}{u_1} Z.
\end{align}
The shock front $F_s$ becomes
\begin{equation}
\Gamma_s=\{ (y_1,y_2): y_1=\psi_s(y_2), 0<y_2<1 \}
\end{equation}
and R-H conditions become
\begin{align}
\label{LRH1} &\left[\frac{1}{\rho u_1}\right]
+ \psi'_s(y_2) \left[\frac{u_2}{u_1}\right] =0, \\
\label{LRH2} &\left[u_1 + \frac{p}{\rho u_1}\right]
+ \psi'_s(y_2) \left[\frac{p u_2}{u_1}\right]=0, \\
\label{LRH3} &\left[u_2\right]-\psi'_s(y_2) \left[p\right]=0, \\
\label{LRH4} &\left[\frac{1}{2} q^2 + \frac{\gamma p}{(\gamma-1)\rho}\right]=0, \\
\label{LRH5} &\left[ Z \right]=0.
\end{align}

Eliminating $\psi_s'$ in the R-H conditions yields
\begin{align}
\label{G1}G_1(U_+, U_-) &:= \left[\frac{1}{\rho u_1}\right][p]+ \left[\frac{u_2}{u_1}\right][u_2]=0, \\
\label{G2}G_2(U_+, U_-) &:= \left[u_1 + \frac{p}{\rho u_1}\right][p]+
\left[\frac{p u_2}{u_1}\right][u_2]=0, \\
\label{G3}G_3(U_+, U_-) &:= \left[\frac{1}{2} q^2 + \frac{\gamma p}{(\gamma-1)\rho}\right] =0, \\
\label{G4}G_4(U_+, U_-) &:= [Z]=0,
\end{align}
and \eqref{LRH3} can be rewritten as
\begin{equation}
\label{G5}G_5(U_+, U_- ; \psi'_s) := [u_2] - \psi_s' [p] =0.
\end{equation}
Then on the shock front, R-H conditions \eqref{LRH1}-\eqref{LRH5} can be replaced by conditions \eqref{G1}-\eqref{G5}.

Under the Lagrange transformation, the entrance $E_1$ and the exit $E_2$ become
\begin{align}
&\Gamma_1 = \{(y_1,y_2): y_1=0, 0<y_2<1 \}, \\
&\Gamma_3 = \{(y_1,y_2): y_1=L, 0<y_2<1 \},
\end{align}
and the lower wall $W_1$ and the upper wall $W_2$ become
\begin{align}
&\Gamma_2 = \{(y_1,y_2): 0<y_1<L, y_2=0 \}, \\
&\Gamma_4 = \{(y_1,y_2): 0<y_1<L, y_2=1 \}.
\end{align}
Hence the nozzle $\mathscr{D}$ becomes a rectangle(see Figure \ref{fig:domainL})
\begin{equation}
\Omega= \{(y_1,y_2): 0<y_1<L, 0<y_2<1 \}.
\end{equation}
Similarly, the supersonic and subsonic region can be denoted as
\begin{align}
&\Omega_- = \{(y_1,y_2): 0<y_1<\psi_s(y_2) , 0<y_2<1 \}, \\
&\Omega_+ = \{(y_1,y_2): \psi_s(y_2)<y_1<L , 0<y_2<1 \}.
\end{align}
\begin{figure}[htbp]
	\centering
	\includegraphics[height=5cm, width=11cm]{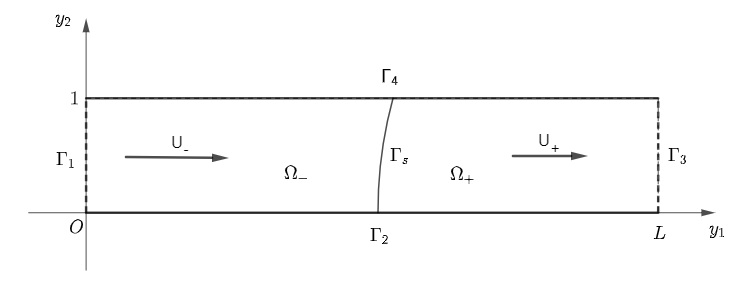}
	\caption{The domain under the Lagrange transformation}
	\label{fig:domainL}
\end{figure}

Thus the free boundary problem can be reformulated as follows:

\begin{problem} \label{problem: Lagrange}
Suppose that $\bar{U}_-$, $P_\mr{e}$ and $\Theta$ are given as Problem \ref{pr:SmallPerturbation}. Then look for a transonic shock solution $(U_-,U_+,\psi_s)$ such that
\begin{enumerate}
	\item $U_-$ and $U_+$ satisfy the reacting Euler system \eqref{LEuler1}-\eqref{LEuler5} in the domain $\Omega_-$ and $\Omega_+$ respectively.
	\item The state at the entrance coincides with the uniform supersonic state:
	\begin{equation}
	U_- = \bar{U}_-, \quad \text{on $\Gamma_1$},
	\end{equation}
	and the pressure at the exit is the given receiver pressure:
	\begin{equation}
	p_+ = \bar{p}_+ + P_\mr{e} (Y(L,y_2);\sigma,\kappa), \quad \text{on $\Gamma_3$},
	\end{equation}
	where
	\begin{equation}
	Y(L,y_2)=\int_0^{y_2} \frac{1}{(\rho q \cos \theta) (L,s)} ds.
	\end{equation}
	\item The slip boundary conditions hold along the boundary $W_1$ and $W_2$:
	\begin{align}
	&\theta_-(y_1) = 0,  &\text{on $\Gamma_2 \cap \overline{\Omega_-}$}, \\
	&\theta_-(y_1) = \sigma\Theta(y_1),  &\text{on $\Gamma_4 \cap \overline{\Omega_-}$}, \\
	&\theta_+(y_1) = 0,  &\text{on $\Gamma_2 \cap \overline{\Omega_+}$}, \\
	&\theta_+(y_1) = \sigma\Theta(y_1),  &\text{on $\Gamma_4 \cap \overline{\Omega_+}$}.
	\end{align}
	\item
	The R-H conditions \eqref{LRH1}-\eqref{LRH5} hold along the shock-front $\Gamma_s$.
\end{enumerate}
\end{problem}

By direct computations, \eqref{LEuler1}-\eqref{LEuler5} can be rewritten into non-divergence form as below:
\begin{align}
\label{NDF1}
&-\frac{\cos \theta}{\rho q} \frac{1-M^2}{\rho q^2} \partial_{y_1} p
-\frac{\sin \theta}{\rho q} \partial_{y_1}\theta + \partial_{y_2}\theta
= \kappa f_1(U), \\
\label{NDF2}
& -\frac{\sin \theta}{\rho q} \partial_{y_1} p + q\cos \theta \partial_{y_1}\theta + \partial_{y_2}p=0, \\
\label{NDF3}
& \partial_{y_1} p +\rho q \partial_{y_1}q =0, \\
\label{NDF4}
& \partial_{y_1} S = \kappa f_4(U),\\
\label{NDF5}
& \partial_{y_1} Z = -\kappa f_5(U),
\end{align}
where
\begin{align*}
	&f_1(U):=\frac{1}{\gamma c_v} \frac{1}{\rho q^2} \frac{\phi(T)}{T} {q_{\mr{e}}} Z, \\
	&f_4(U):=\frac{1}{q\cos \theta} \frac{\phi(T)}{T} {q_{\mr{e}}} Z, \\
	&f_5(U):=\frac{\phi(T)}{q \cos \theta} Z.
\end{align*}

Equation \eqref{NDF3} in the system can be replaced by
\begin{align}\label{NDF3'}
	\partial_{y_1} \left( \frac{1}{2} q^2 +\frac{\gamma p}{(\gamma-1)\rho}+ {q_{\mr{e}}} Z \right)=0.
\end{align}

It is obvious that \eqref{NDF3}-\eqref{NDF3'} are simple hyperbolic equations. To clarify the type of equations \eqref{NDF1}-\eqref{NDF2},  \eqref{NDF1}-\eqref{NDF2} can be rewritten into
\begin{equation}
A(U)
\partial_{y_1}
\left(\begin{matrix} p  \\ \theta \end{matrix}\right)
 +
 \partial_{y_2}
\left(\begin{matrix} p  \\ \theta \end{matrix}\right)
=
\left(\begin{matrix} 0 \\ \kappa f_1(U) \end{matrix}\right),
\end{equation}
where
\begin{equation}
A(U)=\frac{1}{\rho q}
\left(\begin{matrix} -\sin \theta & \rho q^2 \cos \theta
\\ 	-\cos \theta \cdot \displaystyle \frac{1-M^2}{\rho q^2} & -\sin \theta \end{matrix} \right).
\end{equation}
The eigenvalues of $A(U)$ are
\begin{equation}
\lambda_\pm =\frac{1}{\rho q} \left(-\sin \theta \pm \cos \theta \sqrt{M^2-1}\right),
\end{equation}
which implies \eqref{NDF1}-\eqref{NDF2} form a hyperbolic system when the flow is supersonic
and form an elliptic system when the flow is subsonic.
Hence equations \eqref{NDF1}-\eqref{NDF5} is a hyperbolic system in the supersonic region and an elliptic-hyperbolic composite system in the subsonic region.
Due to the theory of hyperbolic systems, The upstream flow can be solved in the whole nozzle if the perturbation of the boundary and the exothermic reaction is sufficiently small. However, the problem in the downstream region is a nonlinear elliptic-hyperbolic composite system and one of the boundaries, the shock front, is the very free boundary since there is no information of the location from the background solution.

To deal with the complicated free boundary problem, the key is to determine the location of the shock front. Next, we will start from a free boundary problem for the linearized reacting Euler system and get an approximating location of the shock front. And we will clarify how the boundaries, exothermic reaction and pressure at the exit influence the location of the shock front. Then the iteration scheme will be constructed based on the solution solved by the free boundary problem.

\subsection{The Free Boundary Problem for the Initial Approximation.}

Let $\overline{\psi}_s(y_2)\equiv\dot{\xi}$, with $\dot{\xi}\in\left(0,L\right)$ an unknown constant to be determined, and
\[
\dot{\Gamma}_s=\set{(y_1,y_2):\ y_1=\overline{\psi}_s(y_2),\ 0<y_2<1},
\]
which is taken as the initial approximating location of the shock-front. Clearly, $\overline{\psi}_s'(y_2)\equiv 0$. Then, $\dot{\Gamma}_s$ divides the domain $\Omega$ into two parts $\dot{\Omega}_{-}$ and $\dot{\Omega}_{+}$ as:
\begin{align*}
&\dot{\Omega}_{-} =  \set{(y_1,y_2):\ 0<y_1<\dot{\xi},\ 0<y_2<1},\\
&\dot{\Omega}_{+} =  \set{(y_1,y_2):\ \dot{\xi}<y_1<L,\ 0<y_2<1},
\end{align*}
and the boundaries $\Gamma_{2}$ and $\Gamma_{4}$  consist of
\begin{align*}
& \dot{\Gamma}_{2}^{-}  =  \set{(y_1,y_2):\ 0<y_1<\dot{\xi},\ y_2=0},\\
& \dot{\Gamma}_{2}^{+}  = \set{(y_1,y_2):\ \dot{\xi}<y_1<L,\ y_2=0},\\
& \dot{\Gamma}_{4}^{-}  =  \set{(y_1,y_2):\ 0<y_1<\dot{\xi},\ y_2=1},\\
& \dot{\Gamma}_{4}^{+}  =  \set{(y_1,y_2):\ \dot{\xi}<y_1<L,\ y_2=1}.
\end{align*}
\begin{figure}[htbp]
	\centering
	\includegraphics[height=5cm, width=11cm]{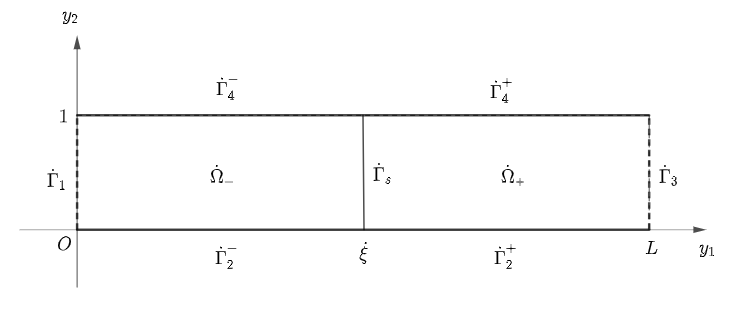}
	\caption{The domain for the linearized problem with an approximating shock-front}
	\label{fig:domain_initial}
\end{figure}

Assume that $\left(\bar{U}_{-},\bar{U}_{+}\right)$ is the unperturbed normal shock solution, with $\bar{U}_{-}=\left(\bar{p}_{-},0,\bar{q}_{-},\bar{S}_{-},\bar{Z}\right)^{\top}$ and $\bar{U}_{+}=\left(\bar{p}_{+},0,\bar{q}_{+},\bar{S}_{+},\bar{Z}\right)^{\top}$.
Let $\dot{U}_{-} \defs \left(\dot{p}_{-},\dot{\theta}_{-},\dot{q}_{-},\dot{S}_{-}, \dot{Z}_{-}\right)^{\top}$  satisfy the linearized reacting Euler system at the uniform supersonic state $\bar{U}_{-}$ below:
\begin{align}
& \partial_{y_2}\dot{p}_{-}+\overline{q}_{-}\partial_{y_1}\dot{\theta}_{-}=0,\label{eq:LEuler.1}\\
& \partial_{y_2}\dot{\theta}_{-}-\frac{1}{\overline{\rho}_{-}\overline{q}_{-}}\cdot\frac{1-\overline{M}_{-}^{2}}
{\overline{\rho}_{-}\overline{q}_{-}^{2}}\partial_{y_1}\dot{p}_{-}= \kappa \bar{f}_1^-,\label{eq:LEuler.2}\\
& \overline{\rho}_{-}\overline{q}_{-}\partial_{y_1}\dot{q}_{-}+\partial_{y_1}\dot{p}_{-}=0,\label{eq:LEuler.3}\\
& \partial_{y_1}\dot{S}_{-}
=\kappa \bar{f}_4^-,\label{eq:LEuler.4}\\
  & \partial_{y_1}\dot{Z}_{-}=-\kappa \bar{f}_5^-,\label{eq:LEuler.5}
\end{align}
where $\bar{f}_i^{-}:= f_i(\bar{U}_{-}), i=1,4,5$, i.e,
\begin{align*}
	&\bar{f}_1^-(U):=\frac{1}{\gamma c_{\mr{v}}}\cdot\frac{1}{\overline{\rho}_{-}\overline{q}_{-}^2}\cdot
	\frac{\phi(\overline{T}_-)}{\overline{T}_-}{q_{\mr{e}}}\bar{Z}, \\
	&\bar{f}_4^-(U):=\frac{1}{\overline{q}_-}\cdot\frac{\phi(\overline{T}_-)}{\overline{T}_-}{q_{\mr{e}}}\bar{Z}, \\
	&\bar{f}_5^-(U):=\frac{\phi(\overline{T}_-)}{\overline{q}_-}\bar{Z}.
\end{align*}

Let $\dot{U}_{+} \defs \left(\dot{p}_{+},\dot{\theta}_{+},\dot{q}_{+},\dot{S}_{+},\dot{Z}_{+}\right)^{\top}$ satisfy the linearized reacting Euler system at the uniform subsonic state $\overline{U}_{+}$ below:
\begin{align}
& \partial_{y_2}\dot{p}_{+}+\overline{q}_{+}\partial_{y_1}\dot{\theta}_{+}=0,\label{eq:LEuler.6}\\
& \partial_{y_2}\dot{\theta}_{+}-\frac{1}{\overline{\rho}_{+}\overline{q}_{+}}
\cdot\frac{1-\overline{M}_{+}^{2}}{\overline{\rho}_{+}\overline{q}_{+}^{2}}\partial_{y_1}\dot{p}_{+}
= \kappa \bar{f}_1^+,\label{eq:LEuler.7}\\
& \partial_{y_1}\left(\overline{q}_{+}\dot{q}_{+}+\frac{1}{\overline{\rho}_{+}}
\dot{p}_{+}+\overline{T}_{+}\dot{S}_{+}+{q_{\mr{e}}}\dot{Z}_{+}\right)=0,\label{eq:LEuler.8}\\
& \partial_{y_1}\dot{S}_{+}
= \kappa \bar{f}_4^+,\label{eq:LEuler.9}\\
  & \partial_{y_1}\dot{Z}_{+}=-\kappa \bar{f}_5^+,\label{eq:LEuler.10}
\end{align}
where $\bar{f}_i^{+}:= f_i(\bar{U}_{+}), i=1,4,5$, i.e,
\begin{align*}
	&\bar{f}_1^+(U):=\frac{1}{\gamma c_{\mr{v}}}\cdot\frac{1}{\overline{\rho}_{+}\overline{q}_{+}^2}\cdot
	\frac{\phi(\overline{T}_+)}{\overline{T}_+}{q_{\mr{e}}}\bar{Z}, \\
	&\bar{f}_4^+(U):=\frac{1}{\overline{q}_+}\cdot\frac{\phi(\overline{T}_+)}{\overline{T}_+}{q_{\mr{e}}}\bar{Z}, \\
	&\bar{f}_5^+(U):=\frac{\phi(\overline{T}_+)}{\overline{q}_+}\bar{Z}.
\end{align*}

Then the desired linear free boundary problem can be summarized as follows.
\begin{problem}  \label{problem: linearized}
Suppose that $\bar{U}_-$, $P_\mr{e}$ and $\Theta$ are given as Problem \ref{pr:SmallPerturbation}.
Determine an approximate solution $\left(\dot{U}_{-},\dot{U}_{+};\dot{\psi}_s', \dot{\xi}\right)$
such that
\begin{enumerate}
	\item $\dot{U}_{-}$ satisfies the linearized equations \eqref{eq:LEuler.1}-\eqref{eq:LEuler.5} in $\dot{\Omega}_{-}$ and the boundary conditions:
	\begin{align}
	& \dot{U}_{-}=0, \quad  \text{on} \ \  {\Gamma}_{1}, \label{eq:LBoun.1}\\
	& \dot{\theta}_{-}=0, \quad  \text{on} \ \  \dot{\Gamma}_{2}^{-}, \label{eq:LBoun.2}\\
	& \dot{\theta}_{-}=\sigma\Theta(y_1), \quad  \text{on} \ \  \dot{\Gamma}_{4}^{-}; \label{eq:LBoun.3}
	\end{align}
	\item $\dot{U}_{+}$ satisfies the linearized equations \eqref{eq:LEuler.6}-\eqref{eq:LEuler.10}	in $\dot{\Omega}_{+}$ and the boundary conditions:
	\begin{align}
	& \dot{p}_{+}=P_{\mr{e}}(y_2; \sigma, \kappa), \quad  \text{on} \ \  {\Gamma}_{3}, \label{eq:LBoun.4}\\
	& \dot{\theta}_{+}=0, \quad  \text{on} \ \  \dot{\Gamma}_{2}^{+},\label{eq:LBoun.5}\\
	& \dot{\theta}_{+}=\sigma\Theta(y_1), \quad  \text{on} \ \   \dot{\Gamma}_{4}^{+};\label{eq:LBoun.6}
	\end{align}
	\item Across the free boundary $\dot{\Gamma}_s$, where $\dot{\xi}\in(0,L)$ will be determined together with $\dot{U}_{-}$ and $\dot{U}_{+}$, $\left(\dot{U}_{-},\dot{U}_{+};\ \dot{\psi}_s' \right)$ satisfies the linearized R-H condition	at $\left(\overline{U}_{+},\overline{U}_{-};\ \overline{\psi}_s'\right)$ below:
	\begin{align}
	& \beta_{j}^{+}\cdot\dot{U}_{+}+\beta_{j}^{-}\cdot\dot{U}_{-}=0,\quad j=1,2,3,4,
\quad  \text{on} \ \    \dot{\Gamma}_{s},\label{eq:LRH.1}\\
	& \beta_{5}^{+}\cdot\dot{U}_{+}+\beta_{5}^{-}\cdot\dot{U}_{-}-\left[\overline{p}\right]\dot{\psi}_s'=0,
\quad  \text{on} \ \    \dot{\Gamma}_{s},\label{eq:LRH.2}
	\end{align}
	where the coefficients are given by $\beta_{j}^{\pm}=\nabla_{U_{\pm}}G_{j}|_{\left(\overline{U}_{+},\overline{U}_{-}\right)} , j=1,2,3,4,$  and $\beta_{5}^{\pm}=\nabla_{U_{\pm}}G_{5}|_{\left(\overline{U}_{+},\overline{U}_{-};\overline{\psi}_s'\right)}$.
\end{enumerate}
\end{problem}

By direct calculations, we have the following lemma.
\begin{lem}\label{lem:3.1}
The coefficients of the linearized R-H conditions \eqref{eq:LRH.1} and \eqref{eq:LRH.2} have explicit forms given below:
\begin{eqnarray*}
&&\beta_{1}^{\pm}=\nabla_{U_{\pm}}G_{1}|_{\left(\overline{U}_{+},\overline{U}_{-}\right)}  =  \pm\frac{1}{\overline{\rho}_{\pm}\overline{q}_{\pm}}\left[\overline{p}\right]
\cdot\left(-\frac{1}{\overline{\rho}_{\pm}\overline{c}_{\pm}^{2}},0,-\frac{1}{\overline{q}_{\pm}},\frac{1}{\gamma c_{\mr{v}}},0\right)^{\top},\\
&&\beta_{2}^{\pm}=\nabla_{U_{\pm}}G_{2}|_{\left(\overline{U}_{+},\overline{U}_{-}\right)}  =  \pm\frac{1}{\overline{\rho}_{\pm}\overline{q}_{\pm}}\left[\overline{p}\right]
\cdot\left(1-\frac{\overline{p}_{\pm}}{\overline{\rho}_{\pm}\overline{c}_{\pm}^{2}},0,
\overline{\rho}_{\pm}\overline{q}_{\pm}-\frac{\overline{p}_{\pm}}{\overline{q}_{\pm}},
\frac{\overline{p}_{\pm}}{\gamma c_{\mr{v}}},0\right)^{\top},\\
&&\beta_{3}^{\pm}=\nabla_{U_{\pm}}G_{3}|_{\left(\overline{U}_{+},\overline{U}_{-}\right)}  =
\pm\left(\frac{1}{\overline{\rho}_{\pm}},0,\overline{q}_{\pm},\frac{1}{\left(\gamma-1\right)c_{\mr{v}}}
\cdot\frac{\overline{p}_{\pm}}{\overline{\rho}_{\pm}},0\right)^{\top},\\
&&\beta_{4}^{\pm}=\nabla_{U_{\pm}}G_{4}|_{\left(\overline{U}_{+},\overline{U}_{-}\right)}  =  \pm\left(0,0,0,0,1\right)^{\top},\\
&&\beta_{5}^{\pm}=\nabla_{U_{\pm}}G_{5}|_{\left(\overline{U}_{+},\overline{U}_{-};\overline{\psi}_{\mr{s}}'\right)}
 =  \pm\left(0,\overline{q}_{\pm},0,0,0\right)^{\top}.
\end{eqnarray*}
\end{lem}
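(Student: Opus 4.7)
The plan is to differentiate each $G_j$ at the background state $(\bar{U}_+, \bar{U}_-; \bar{\psi}_s' \equiv 0)$ component by component, exploiting several vanishing identities at the uniform normal shock so that the product-rule expansion collapses.

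First, I record the structure of the background. Since $\bar{\theta}_\pm = 0$, one has $\bar{u}_1^\pm = \bar{q}_\pm$ and $\bar{u}_2^\pm = 0$; together with $\bar{Z}_+ = \bar{Z}_-$, the normalization $\bar{\rho}_\pm \bar{q}_\pm = 1$, and the background R-H relations \eqref{RH}, this yields the key jump identities
\[
[\bar{u}_2] = 0, \quad \Big[\tfrac{\bar{u}_2}{\bar{u}_1}\Big] = 0, \quad \Big[\tfrac{\bar{p}\,\bar{u}_2}{\bar{u}_1}\Big] = 0, \quad \Big[\tfrac{1}{\bar{\rho}\,\bar{u}_1}\Big] = 0, \quad \Big[\bar{u}_1 + \tfrac{\bar{p}}{\bar{\rho}\,\bar{u}_1}\Big] = 0.
\]
I also note the thermodynamic derivatives $\partial_p \rho|_S = \rho/(\gamma p) = 1/c^2$ and $\partial_S \rho|_p = -\rho/(\gamma c_{\mathrm{v}})$ obtained from $p = (\gamma - 1) e^{S/c_{\mathrm{v}}} \rho^\gamma$, and the kinematic identities $\partial_q u_1 = 1$, $\partial_\theta u_1 = 0$, $\partial_q u_2 = 0$, $\partial_\theta u_2 = q$ valid at $\theta = 0$.

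Next, I apply the product rule to each $G_j$ at the background and collect the surviving terms. For $G_1 = [\frac{1}{\rho u_1}][p] + [\frac{u_2}{u_1}][u_2]$, the identities above kill three of the four summands in the total differential, leaving $dG_1|_{\bar{U}} = d[\frac{1}{\rho u_1}] \cdot [\bar{p}]$; an identical cancellation reduces $G_2$ to $dG_2|_{\bar{U}} = d[u_1 + \frac{p}{\rho u_1}] \cdot [\bar{p}]$. For $G_3$ and $G_4$, direct differentiation suffices; for $G_5 = [u_2] - \psi_s'[p]$ at $\bar{\psi}_s' = 0$, differentiation with respect to $U_\pm$ reduces to $\pm \nabla u_2|_{\bar{\theta} = 0}$. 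The opposite signs between $\beta_j^+$ and $\beta_j^-$ are precisely the $\pm$ in $\partial_{U_\pm}[f] = \pm \partial_{U_\pm} f_\pm$.

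It then remains to evaluate the gradients of the five surviving scalars $\frac{1}{\rho u_1}$, $u_1 + \frac{p}{\rho u_1}$, $\frac{1}{2}q^2 + \frac{\gamma p}{(\gamma - 1)\rho}$, $Z$, and $u_2$ at the background, which is a short calculation combining the thermodynamic and kinematic derivatives above. Factoring out the common prefactor $\pm [\bar{p}]/(\bar{\rho}_\pm \bar{q}_\pm)$ for $\beta_{1}^\pm, \beta_{2}^\pm$ and using $\bar{c}_\pm^2 = \gamma \bar{p}_\pm/\bar{\rho}_\pm$ then matches each vector to the stated form. No conceptual obstacle is expected; the principal care required is consistent sign bookkeeping and uniform application of the normalization $\bar{\rho}_\pm \bar{q}_\pm = 1$ so that the compact expressions on the right-hand sides agree with the computed gradients exactly.
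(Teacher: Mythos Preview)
Your proposal is correct and is exactly the direct calculation the paper has in mind; the paper itself offers no detailed proof for this lemma beyond the phrase ``by direct calculations,'' and your plan---recording the vanishing jumps at the background, applying the product rule so that only one factor survives in $G_1$ and $G_2$, and then reading off the gradients of $\frac{1}{\rho u_1}$, $u_1+\frac{p}{\rho u_1}$, $\frac{1}{2}q^2+\frac{\gamma p}{(\gamma-1)\rho}$, $Z$, $u_2$ using $\partial_p\rho=1/c^2$, $\partial_S\rho=-\rho/(\gamma c_{\mathrm v})$---is precisely how one carries this out.
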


\subsection{Spaces and Notations}
Since the reacting Euler system is elliptic-hyperbolic composite in the subsonic region, we need to give different norms on different components. Denote $\bar{\psi}(y_2) \equiv \bar{\xi}_*$ and
\begin{align*}
	&\Omega(\bar{\psi}) = \{ (y_1,y_2) : \bar{\psi}(y_2)< y_1 < L, 0 < y_2 < 1\}, \\
	&\Gamma(\bar{\psi}) = \{ (y_1,y_2) : y_1 = \bar{\psi}(y_2), 0 < y_2 <1 \}.
\end{align*}
For the state $U=(p,\theta,q,S,Z)^{\top}$ in $\Omega(\bar{\psi})$, define
\begin{align*}
	\| U \|_{(\Omega(\bar{\psi}),\Gamma(\bar{\psi}))}
	&:= \| p \|_{W_\beta^1(\Omega(\bar{\psi}))} + \| \theta \|_{W_\beta^1(\Omega(\bar{\psi}))} \\
	&\quad + \| (q,S,Z) \|_{C(\overline{\Omega(\bar{\psi})})}
	+\| (q,S,Z) \|_{W_\beta^{1-\frac{1}{\beta}}(\Gamma(\bar{\psi}))}.
\end{align*}
For the subsonic region
\begin{align*}
	\Omega(\psi) = \{ (y_1,y_2) : \psi(y_2)< y_1 < L, 0 < y_2 < 1\},
\end{align*}
which has a shock front as a free boundary
\begin{align*}
	\Gamma(\psi) = \{ (y_1,y_2) : y_1 = \psi(y_2), 0 < y_2 <1 \},
\end{align*}
a transformation
$\Pi_{\psi}: \Omega(\psi) \rightarrow \Omega(\bar{\psi})$ needs to be introduced as follows:
\begin{align*}
	&z_1=L+\frac{L-\bar{\psi}(y_2)}{L-\psi(y_2)}(y_1-L) \\
	&z_2=y_2.
\end{align*}
The transformation is invertible if $\| \psi - \bar{\psi} \|_{C^{1,\alpha}[0,1]}$ is sufficiently small.
Then the norm of $U$ in $\Omega(\psi)$ can be defined as
\begin{align*}
	\| U \|_{(\Omega(\psi),\Gamma(\psi))}:=
	\| U \circ \Pi_\psi^{-1} \|_{(\Omega(\bar{\psi}),\Gamma(\bar{\psi}))}.
\end{align*}

\subsection{Main Theorems.} \label{section: main theorem}
The equations in the subsonic region is elliptic-hyperbolic composite. 
Its elliptic part is close to a Cauchy-Riemann system if we consider the solutions are near the background solution. 
With the boundary conditions of the subsonic region, additional conditions for $P_{\mr{e}}$ are necessary to make the elliptic sub-problem solvable (See \cite{FangXin2021CPAM}, Appendix A).
We will show the conditions first in the following and explain why we need these conditions in Section 3.

It turns out that the solvability condition will be 
\begin{align} \label{R=P}
	R(\dot{\xi}; \sigma, \kappa)= P_*(\sigma, \kappa).
\end{align}
where
\begin{align}
	R(\xi; \sigma, \kappa)
	&:=\sigma R_\sigma(\xi) +\kappa R_\kappa(\xi), \\
	P_*(\sigma, \kappa) &:=
	\frac{1}{\bar{\rho}_+ \bar{q}_+} \frac{1-\bar{M}_+^2}{\bar{\rho}_+ \bar{q}_+^2}
	\int_0^1 P_{\mr{e}} (y_2; \sigma, \kappa) \dif y_2,
\end{align}
with
\begin{align*}
	R_\sigma(\xi) &:= \int_0^L \Theta(y_1) \dif y_1-\dot{K}_1 \int_0^\xi \Theta(y_1) \dif y_1, \\
	R_\kappa(\xi) &:= -\bar{f}_1^+ L +\dot{K}_2 \xi,
\end{align*}
and
\begin{align*}
	\dot{K}_1 &:= [\bar{p}] \left( \frac{\gamma - 1}{\gamma \bar{p}_+}
	+ \frac{1}{\bar{\rho}_+ \bar{q}_+^2}\right) > 0, \\
	\dot{K}_2 &:=\frac{1}{\gamma c_{\mr{v}}} \frac{1}{\bar{T}_+} q_{\mr{e}} \bar{Z}
	(\frac{\phi(\overline{T}_+)}{\bar{\rho}_+ \bar{q}^2_+} - \frac{\phi(\overline{T}_-)}{\bar{\rho}_- \bar{q}^2_-})>0.
\end{align*}
Define
\begin{align*}
	\underline{R}(\sigma, \kappa) &:= \inf_{\xi \in (0,L)} R(\xi; \sigma, \kappa),\\
	\overline{R}(\sigma, \kappa) &:= \sup_{\xi \in (0,L)} R(\xi; \sigma, \kappa).
\end{align*}
For each $(\sigma, \kappa)$, if
\begin{align} \label{condition: admissible pressure}
	\underline{R}(\sigma, \kappa) <  P_*(\sigma, \kappa) < \overline{R}(\sigma, \kappa),
\end{align}
then by the continuity of the function $R$ with respect to $ \xi $, there exists at least one $\dot{\xi}(\sigma, \kappa)$ such that \eqref{R=P} holds.

%Moreover, if the pressure at the exit $P_\mr{e}$ has the decomposition:
In this paper, for explicitness of the argument, we assume that the perturbation of the pressure at the exit $P_\mr{e}$ has the following expression:
\begin{align*}
	P_\mr{e}(y_2; \sigma, \kappa) = \sigma P_\sigma(y_2) + \kappa P_\kappa(y_2),
\end{align*}
and set
\begin{align*}
	P_{\sigma *} &:=
	\frac{1}{\bar{\rho}_+ \bar{q}_+} \frac{1-\bar{M}_+^2}{\bar{\rho}_+ \bar{q}_+^2}
	\int_0^1 P_{\sigma}(y_2) \dif y_2, \\
	P_{\kappa *} &:=
	\frac{1}{\bar{\rho}_+ \bar{q}_+} \frac{1-\bar{M}_+^2}{\bar{\rho}_+ \bar{q}_+^2}
	\int_0^1 P_{\kappa}(y_2) \dif y_2.
\end{align*}
Next we will consider four typical cases. The following theorem shows that for these cases, there exists a fixed position $\bar{\xi}_* \in (0,L)$ such the approximating location of the shock front $\dot{\xi}(\sigma, \kappa)$ is close to $\bar{\xi}_*$ as $\sigma$ and $\kappa$ is sufficiently small.
%Then there exists a constant $\bar{\xi}_*$ such that \eqref{R=P} holds if and only if
%\begin{align} \label{R=P v2}
%	R_{\sigma}(\bar{\xi}_*)= P_{\sigma*}, \quad
%	R_{\kappa}(\bar{\xi}_*)= P_{\kappa*}.
%\end{align}
%Hence we can define a curve $\mathcal{C}$ by
%\begin{align*}
%	\mathcal{C}:=\{ (y_1,y_2): (y_1,y_2)=(R_{\sigma}(\xi),R_{\kappa}(\xi)) , \xi \in (0,L)   \}.
%\end{align*}
%If $P_\mr{e}(y_2;\sigma, \kappa) = \sigma P_\sigma(y_2) + \kappa P_\kappa(y_2) $ with $(P_{\sigma*}, P_{\kappa*}) \in \mathcal{C}$, then we can determine the approximating location of the shock front $\dot{\xi}(\sigma, \kappa) \equiv \bar{\xi}_*$ for each $\sigma$ and $\kappa$.

%To construct an appropriate iteration in the next section, we want that the approximating location of the shock front $\dot{\xi}(\sigma, \kappa)$ can be close to some fixed position $\bar{\xi}_* \in (0,L)$ as $\sigma$ and $\kappa$ are sufficiently small. So for $P_\mr{e}(y_2;\sigma, \kappa) = \sigma P_\sigma(y_2) + \kappa P_\kappa(y_2)$, we consider four typical cases and the following theorem shows that there exists an approximating location of the shock front for these cases.

%With these notations, we consider four special cases to determine the approximating location of the shock front first in the following theorem which will be proved in the next section.
%Hence we can give the approximate solution of Problem \ref{problem: linearized} first:

\begin{thm} \label{thm: apprximating location of shock-front}
	Suppose $A,A_1,A_2$ are positive constants. $P_\mr{e}$ has the form
	\begin{align*}
		P_\mr{e}(y_2; \sigma, \kappa) = \sigma P_\sigma(y_2) + \kappa P_\kappa(y_2).
	\end{align*}
	Then
	\begin{itemize}
		\item[(i)]Suppose $\kappa = A_1 \sigma^s $ with $s > 1$ so that
		$P_\mr{e}(y_2;\sigma, \kappa) = \sigma P_\sigma(y_2) + \sigma^s A_1 P_\kappa(y_2)$.
		Denote
		\begin{align*}
			\overline{R_\sigma} = \sup_{\xi \in (0,L)} R_\sigma (\xi), \quad
			\underline{R_\sigma} = \inf_{\xi \in (0,L)} R_\sigma (\xi).
		\end{align*}
		If $P_{\sigma *} \in (\underline{R_\sigma}, \overline{R_\sigma})$,
		$\bar{\xi}_* \in (0,L)$ is a solution of $R_\sigma(\bar{\xi}_*) = P_{\sigma *}$ and
		$\Theta(\bar{\xi}_*) \neq 0$,
		then there exists a constant $\sigma_1 > 0$ such that
		for each $\sigma \in (0,\sigma_1)$, \eqref{R=P} has a solution $\dot{\xi}(\sigma, \kappa)$
		with the estimate
		\begin{align*}
			|\dot{\xi}(\sigma, \kappa) - \bar{\xi}_*| \leq C_2 \sigma^{s-1}.
		\end{align*}
		\item[(ii)]Suppose $\sigma = A_2 \kappa^s $ with $s > 1$ so that
		$P_\mr{e}(y_2;\sigma, \kappa) = \kappa P_\kappa(y_2) + \kappa^s A_2 P_\sigma(y_2)$.
		Denote 
		\begin{align*}
			\overline{R_\kappa} = \sup_{\xi \in (0,L)} R_\kappa (\xi), \quad
			\underline{R_\kappa} = \inf_{\xi \in (0,L)} R_\kappa (\xi).
		\end{align*}
		If $P_{\kappa *} \in (\underline{R_\kappa}, \overline{R_\kappa})$ and
		$\bar{\xi}_* \in (0,L)$ is a solution of $R_\kappa(\bar{\xi}_*) = P_{\kappa *}$,
		then there exists a constant $\kappa_1 > 0$ such that
		for each $\kappa \in (0,\kappa_1)$, \eqref{R=P} has a solution $\dot{\xi}(\sigma, \kappa)$
		with the estimate
		\begin{align*}
			|\dot{\xi}(\sigma, \kappa) - \bar{\xi}_*| \leq C_1 \kappa^{s-1}.
		\end{align*}
		\item[(iii)]Suppose $\sigma = A \kappa $ so that $P_\mr{e}(y_2;\sigma, \kappa) = \kappa P_A(y_2)$ with 
		\begin{align*}
			P_A(y_2)=A P_\sigma(y_2)+ P_\kappa(y_2).	
		\end{align*}
		Denote $R_A(\xi) = A R_\sigma(\xi) + R_\kappa(\xi)$ and
		\begin{align*}
			\overline{R_A} = \sup_{\xi \in (0,L)} R_A (\xi), \quad
			\underline{R_A} = \inf_{\xi \in (0,L)} R_A (\xi).
		\end{align*}
		If $P_{A*} \in (\underline{R_A}, \overline{R_A})$,
		where
		\begin{align*}
			P_{A*} :=
			\frac{1}{\bar{\rho}_+ \bar{q}_+} \frac{1-\bar{M}_+^2}{\bar{\rho}_+ 	\bar{q}_+^2}
			\int_0^1 P_{A}(y_2) \dif y_2,
		\end{align*}
		then there exists a constant $\bar{\xi}_* \in (0,L)$ such that \eqref{R=P} holds for 
		\begin{align*}
			\dot{\xi}(\sigma, \kappa) \equiv \bar{\xi}_*.
		\end{align*}
		\item[(iv)]
		Suppose $P_\sigma, P_\kappa$ satisfy $(P_{\sigma*}, P_{\kappa*}) \in \mathcal{C}$, where $\mathcal{C}$ is a curve which is defined by
		\begin{align*}
			\mathcal{C}:=\{ (y_1,y_2): (y_1,y_2)=(R_{\sigma}(\xi),R_{\kappa}(\xi)) , \xi \in (0,L)   \}.
		\end{align*} 
		Then there exists $\bar{\xi}_* \in (0,L)$ such that \eqref{R=P} holds for 		\begin{align*}
			\dot{\xi}(\sigma, \kappa) \equiv \bar{\xi}_*.
		\end{align*}
	\end{itemize}
\end{thm}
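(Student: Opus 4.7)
The plan is to analyze the scalar equation $R(\dot{\xi};\sigma,\kappa) = P_*(\sigma,\kappa)$ case-by-case, exploiting the explicit formulas for $R_\sigma$ and $R_\kappa$ given above. The derivative identities
\[
R_\sigma'(\xi) = -\dot{K}_1 \Theta(\xi), \qquad R_\kappa'(\xi) = \dot{K}_2 > 0
\]
will be used throughout: the first follows by differentiating the integral formula for $R_\sigma$, and the second is immediate from the affine form of $R_\kappa$. The positivity of $\dot{K}_1$ and $\dot{K}_2$ has already been recorded.

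Cases (iii) and (iv) admit a $(\sigma,\kappa)$-independent root and are handled by direct inspection. In Case (iii), since $\sigma = A\kappa$, both sides of \eqref{R=P} carry a common factor $\kappa$ and the equation collapses to $R_A(\xi) = P_{A*}$; the hypothesis $P_{A*}\in(\underline{R_A},\overline{R_A})$ combined with continuity of $R_A$ on $(0,L)$ yields a root $\bar{\xi}_*\in(0,L)$ by the intermediate value theorem, and $\dot{\xi}\equiv\bar{\xi}_*$ solves \eqref{R=P} for every admissible $(\sigma,\kappa)$. In Case (iv), the assumption $(P_{\sigma*},P_{\kappa*})\in\mathcal{C}$ directly provides $\bar{\xi}_*\in(0,L)$ with $R_\sigma(\bar{\xi}_*)=P_{\sigma*}$ and $R_\kappa(\bar{\xi}_*)=P_{\kappa*}$, so again $\dot{\xi}\equiv\bar{\xi}_*$ is an exact solution.

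For Case (i), I would substitute $\kappa = A_1\sigma^s$ into \eqref{R=P} and divide through by $\sigma$, reducing the problem to
\[
F(\xi,\sigma) \defs \bigl(R_\sigma(\xi) - P_{\sigma*}\bigr) + \sigma^{s-1}A_1\bigl(R_\kappa(\xi) - P_{\kappa*}\bigr) = 0.
\]
By the hypothesis $R_\sigma(\bar{\xi}_*)=P_{\sigma*}$ we have $F(\bar{\xi}_*,0)=0$, and $\partial_\xi F(\bar{\xi}_*,0) = -\dot{K}_1\Theta(\bar{\xi}_*)\neq 0$. The implicit function theorem then furnishes a $C^1$ root $\dot{\xi}(\sigma)$ for $\sigma\in(0,\sigma_1)$ with $\sigma_1>0$ sufficiently small; Taylor expanding $F$ about $(\bar{\xi}_*,0)$ and solving for $\dot{\xi}-\bar{\xi}_*$ gives the quantitative bound $|\dot{\xi}-\bar{\xi}_*|\leq C_2\sigma^{s-1}$, since the forcing term has size $O(\sigma^{s-1})$ and the linearization is invertible with controlled inverse. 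Case (ii) is entirely analogous: substituting $\sigma = A_2\kappa^s$ and dividing by $\kappa$ produces an equation in which the leading derivative at the zeroth-order root is $R_\kappa'(\bar{\xi}_*)=\dot{K}_2>0$, which is automatically nonzero; this is precisely why Case (ii) requires no geometric assumption on $\Theta$ analogous to $\Theta(\bar{\xi}_*)\neq 0$.

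The only subtle point is the non-degeneracy of the linearization at $\bar{\xi}_*$ in Case (i); the condition $\Theta(\bar{\xi}_*)\neq 0$ is exactly what guarantees $R_\sigma'(\bar{\xi}_*)\neq 0$, without which the implicit function theorem would fail at first order. Everything else reduces to routine quantitative perturbation of a single scalar equation, and the exponent $s-1$ in the estimates is dictated by the relative scale between the leading and subleading terms after dividing by the dominant small parameter.
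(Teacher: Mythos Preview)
Your proposal is correct and follows essentially the same approach as the paper. The only cosmetic difference is that you normalize by the dominant small parameter (dividing \eqref{R=P} by $\sigma$ in Case (i) and by $\kappa$ in Case (ii)) before applying the implicit function theorem, whereas the paper works with the unnormalized functional $I(\xi;\sigma,\kappa)=\sigma R_\sigma(\xi)+\kappa R_\kappa(\xi)-\sigma P_{\sigma*}-\kappa P_{\kappa*}$ and checks that $\partial_\xi I(\bar{\xi}_*;\sigma,\kappa)=-\sigma\dot{K}_1\Theta(\bar{\xi}_*)+\kappa\dot{K}_2$ is nonzero for small parameters; the two arguments are equivalent and yield the same estimate.
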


With the approximating location of the shock front solved by Theorem \ref{thm: apprximating location of shock-front}, the problem will be considered under four cases which will be called by \textbf{Perturbation Hypotheses}:
\begin{itemize}
	\item [(H1)] $\kappa = A_1 \sigma^s $ with $s > 1$ and $A_1>0$, 
	$P_{\sigma *} \in (\underline{R_\sigma}, \overline{R_\sigma})$ and $\Theta(\bar{\xi}_*) \neq 0$.
	\item [(H2)] $\sigma = A_2 \kappa^s $ with $s > 1$ and $A_2>0$, 
	and $P_{\kappa *} \in (\underline{R_\kappa}, \overline{R_\kappa})$.	
	\item [(H3)] $\sigma=A \kappa$ with $A>0$,
	$P_{A*} \in (\underline{R_A}, \overline{R_A})$ and $\Theta(\bar{\xi}_*) \neq \displaystyle \frac{\dot{K}_2}{A \dot{K}_1}$.
	\item [(H4)] $(P_{\sigma*}, P_{\kappa*}) \in \mathcal{C}$. $\sigma$ and $\kappa$ satisfy
	\begin{align} \label{admissble condition}
		| \kappa \dot{K}_2
		-\sigma \dot{K}_1 \Theta(\dot{\xi})| \geq \beta_0(\sigma + \kappa).
	\end{align}
	for some constant $\beta_0 >0$.
\end{itemize}

In Case (H1), the perturbation of the boundary of the nozzle has the main effect to the flow. The result is similar to the result in \cite{FangXin2021CPAM} where there is a perturbation on the boundary but no exothermic reaction. So the condition $\Theta(\bar{\xi}_*) \neq 0$ is also needed for this case. 
In Case (H2), the main effect is from the exothermic reaction. The behavior of the solution is similar to the case of a contracting nozzle, the location of the shock will get closer to the exit as the pressure increases at the exit. 
And for Case (H3), the influence of the exothermic reaction and the perturbation of the boundary of the nozzle is at the same level so that the shock front will be effected by both factors. So we consider the pressure at the exit and its range with a weight constant $A$.

Case (H4) is a special case that we add an additional condition on the pressure at the exit such that the approximating location of the shock front can be independent of $\sigma$ and $\kappa$. But for the estimates in the iteration scheme, we still need a technical condition \eqref{admissble condition} to give a lower bound. 

We note that it is easy to see that in Case (H1)-(H3) we can also choose a constant $\beta_0 >0$ such that \eqref{admissble condition} holds for sufficiently small $\sigma$ and $\kappa$. Therefore we also use the notation $\beta_0$ to represent the constant which makes the estimate \eqref{admissble condition} hold in Case (H1)-(H3).

%\begin{rem}
%	\eqref{admissble condition} is a technical condition to give a lower bound for the estimates in the iteration scheme. It is easy to see that in Case (i)-(iii) we can also choose a constant $\beta_0 >0$ such that \eqref{admissble condition} holds for sufficiently small $\sigma$ and $\kappa$. Therefore we also use the notation $\beta_0$ to represent the constant which makes the estimate \eqref{admissble condition} hold in Case (i)-(iii).
%\end{rem}

Then for each case, the linearized problem can be solved as follows.

\begin{thm}\label{thm: linearized solution}
	Let $\alpha \in (0,1)$ and $\beta > 2$.
	Suppose that $P_\mr{e}$ and small constants $\sigma$, $\kappa$ match one of Perturbation Hypotheses.
	Then there exists a solution $\left( \dot{U}_-, \dot{U}_+; \dot{\psi}', \dot{\xi} \right)$ to the Problem \ref{problem: linearized}, where $\dot{\xi}$ is determined in the Theorem \ref{thm: apprximating location of shock-front}.
	Moreover, it holds that
	\begin{equation*}
		\norm{\dot{U}_{-}}_{\mcc^{2,\alpha}(\dot{\Omega}_{-})}
		+\norm{\dot{U}_{+}}_{(\dot{\Omega}_{+};\dot{\Gamma}_{s})}
		+\norm{\dot{\psi}'}_{W_{\beta}^{1-1/\beta}(\dot{\Gamma}_{s})}
		\leq \dot{C}(\sigma+\kappa),
	\end{equation*}
	where the constant $\dot{C}$ depends on $\overline{U}_{\pm}, L, \dot{\xi}$, $\alpha$ and $\beta$.
\end{thm}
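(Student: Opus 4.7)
The plan is to decouple the free boundary problem into a sequence of sub-problems, solving first the hyperbolic supersonic piece, then transferring data to the shock via the linearized Rankine--Hugoniot conditions, then solving the subsonic piece by splitting it into streamline ODEs and a mixed elliptic boundary value problem, and finally recovering the slope $\dot{\psi}'$. Throughout, $\dot{\xi}$ is the position supplied by Theorem \ref{thm: apprximating location of shock-front}, and the elliptic sub-problem's solvability condition is exactly \eqref{R=P}, so there is no circularity.

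First I would solve the upstream linearized system \eqref{eq:LEuler.1}--\eqref{eq:LEuler.5} in $\dot{\Omega}_-$. Equations \eqref{eq:LEuler.1}--\eqref{eq:LEuler.2} form a $2\times 2$ linear strictly hyperbolic system at the constant state $\bar{U}_-$ for $(\dot{p}_-,\dot{\theta}_-)$, with the characteristic slip boundary conditions \eqref{eq:LBoun.2}--\eqref{eq:LBoun.3} on $\dot{\Gamma}_2^-,\dot{\Gamma}_4^-$, initial data $\dot{U}_-=0$ on $\Gamma_1$, and a source term of size $\kappa$. Standard reflection and propagation of waves between the parallel walls yield a classical $C^{2,\alpha}$ solution with the estimate $\|(\dot p_-,\dot\theta_-)\|_{C^{2,\alpha}}\le C(\sigma+\kappa)$ (boundary data is of order $\sigma$, interior source of order $\kappa$). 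The remaining three equations \eqref{eq:LEuler.3}--\eqref{eq:LEuler.5} are ODEs along $y_2=\text{const}$ and integrate directly, giving the full bound $\|\dot{U}_-\|_{C^{2,\alpha}(\dot{\Omega}_-)}\le C(\sigma+\kappa)$.

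Next, on $\dot{\Gamma}_s$ I would use Lemma \ref{lem:3.1} to convert \eqref{eq:LRH.1}--\eqref{eq:LRH.2} into explicit boundary data. The first four linearized R--H conditions involve only the components $(\dot p_+,\dot q_+,\dot S_+,\dot Z_+)$ of $\dot{U}_+$ (each $\beta_j^\pm$, $j=1,\dots,4$, has its $\dot\theta$ entry equal to zero), so they give four independent Dirichlet-type conditions on $\dot\Gamma_s$ for $(\dot p_+,\dot q_+,\dot S_+,\dot Z_+)$ in terms of $\dot U_-$; the fifth R--H condition \eqref{eq:LRH.2} determines $\dot\psi'$ once $\dot\theta_\pm$ are known on $\dot{\Gamma}_s$. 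With these initial data, I would integrate the downstream hyperbolic equations \eqref{eq:LEuler.8}--\eqref{eq:LEuler.10} along the streamlines $y_2=\text{const}$ in $\dot\Omega_+$: first $\dot S_+$ and $\dot Z_+$ from \eqref{eq:LEuler.9}--\eqref{eq:LEuler.10}, then $\dot q_+$ from \eqref{eq:LEuler.8}, each giving a bound of order $\sigma+\kappa$ in the trace norm on $\dot\Gamma_s$ and the sup norm on $\overline{\dot{\Omega}_+}$.

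The main obstacle is the subsonic elliptic sub-problem for $(\dot p_+,\dot\theta_+)$, consisting of \eqref{eq:LEuler.6}--\eqref{eq:LEuler.7} (a Cauchy--Riemann-type system with an $O(\kappa)$ source) together with
\begin{equation*}
\dot p_+=P_{\mr e}\ \text{on}\ \Gamma_3,\qquad \dot\theta_+=0\ \text{on}\ \dot\Gamma_2^+,\qquad \dot\theta_+=\sigma\Theta\ \text{on}\ \dot\Gamma_4^+,
\end{equation*}
and a Dirichlet-type condition on $\dot\Gamma_s$ obtained by eliminating $(\dot q_+,\dot S_+,\dot Z_+)$ from the already-used R--H relations, yielding a condition of the form $\dot p_+=\Pi(\dot U_-)$ on $\dot\Gamma_s$. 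This is a mixed boundary value problem on a rectangle with corners, precisely the geometry treated in \cite{FangXin2021CPAM,Grisvard1985}, which is why the weighted space $W^1_\beta$ with $\beta>2$ is used: the solution generically has corner singularities whose control requires weighted estimates. Integrating $\partial_{y_2}\dot\theta_+$ from \eqref{eq:LEuler.7} over $\dot\Omega_+$ produces a scalar compatibility condition that reads exactly \eqref{R=P}; by Theorem \ref{thm: apprximating location of shock-front}, the choice of $\dot\xi$ satisfies this identity under any of the Perturbation Hypotheses, so the mixed BVP is solvable and standard weighted elliptic estimates give
\begin{equation*}
\|(\dot p_+,\dot\theta_+)\|_{W^1_\beta(\dot\Omega_+)}\le C(\sigma+\kappa).
\end{equation*}

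Finally, with $\dot\theta_\pm$ in hand on $\dot\Gamma_s$, the fifth R--H condition \eqref{eq:LRH.2} determines $\dot\psi'$ pointwise and the trace-theoretic bound gives $\|\dot\psi'\|_{W^{1-1/\beta}_\beta(\dot\Gamma_s)}\le C(\sigma+\kappa)$. Combining the four estimates (upstream $C^{2,\alpha}$, streamline ODEs for $\dot q_+,\dot S_+,\dot Z_+$, elliptic estimate for $(\dot p_+,\dot\theta_+)$, and the algebraic recovery of $\dot\psi'$) yields the desired norm bound in the theorem; the constant $\dot C$ depends on $\bar U_\pm,L,\dot\xi,\alpha,\beta$ through the ellipticity constants, the geometry of the rectangle, and the weighted elliptic estimates.
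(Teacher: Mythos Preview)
Your approach is essentially the paper's own: solve the supersonic hyperbolic block (Lemma~\ref{lem:3.2}), reformulate the linearized R--H conditions via Lemma~\ref{lem:3.1} to extract Dirichlet data for $(\dot p_+,\dot q_+,\dot S_+,\dot Z_+)$ on $\dot\Gamma_s$, solve the elliptic sub-problem for $(\dot p_+,\dot\theta_+)$ whose solvability condition is exactly \eqref{R=P} and is guaranteed by Theorem~\ref{thm: apprximating location of shock-front}, integrate the streamline ODEs, and finally read off $\dot\psi'$ from \eqref{eq:LRH.2}.

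One ordering correction: you place the integration of \eqref{eq:LEuler.8} for $\dot q_+$ \emph{before} the elliptic step, but \eqref{eq:LEuler.8} says the combination $\bar q_+\dot q_+ + \bar\rho_+^{-1}\dot p_+ + \bar T_+\dot S_+ + q_{\mr e}\dot Z_+$ is conserved along streamlines, so to recover $\dot q_+(y_1,y_2)$ you need $\dot p_+(y_1,y_2)$ throughout $\dot\Omega_+$, not just its trace on $\dot\Gamma_s$. The paper therefore solves the elliptic problem for $(\dot p_+,\dot\theta_+)$ first and only then determines $\dot q_+$ (the ODEs for $\dot S_+,\dot Z_+$ are genuinely decoupled and can go in either order). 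With that swap your outline matches the paper and is complete.
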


With the approximating solution, now we can state the main theorem as follows.
\begin{thm} \label{Main Theorem}
	(Main Theorem)
	Let $\alpha \in (0,1)$ and $\beta > 2$.
	Suppose that $P_\mr{e}$ and small constants $\sigma$, $\kappa$ match one of Perturbation Hypotheses.
	$\dot{\xi}(\sigma, \kappa) \in (0,L)$ is the approximating location of the shock-front in Theorem \ref{thm: apprximating location of shock-front}.
	%Assume $\dot{\xi}(\sigma, \kappa) \rightarrow \bar{\xi}_* \in (0,L)$ as $(\sigma,\kappa) \rightarrow (0,0)$.
	Then there exists a sufficiently small constant $\varepsilon>0$ depending on $\bar{U}_-$, $\bar{U}_+$, $L$, $\dot{\xi}$, $\alpha$, $\beta$ and $\beta_0$, such that for
	any $\sigma, \kappa$ satisfying $0<\sigma + \kappa < \varepsilon$,
	there exists a transonic shock solution $(U_-, U_+; \psi)$ to Problem \ref{problem: Lagrange}, with the estimates
	\begin{align*}
		&| \psi(1)-\dot{\xi} | \leq C_s (\sigma + \kappa),\\
		&\| \psi' \|_{W_{\beta}^{1-\frac{1}{\beta}}(\Gamma_s)} \leq C_s (\sigma + \kappa),\\
		&\| U_- - \bar{U}_- \|_{C^{2,\alpha}(\Omega_-)} \leq C_s (\sigma + \kappa),\\
		&\| U_+ - \bar{U}_+ \|_{(\Omega_+; \Gamma_s)} \leq C_s (\sigma + \kappa),
	\end{align*}
	where $C_s$ is a constant depending on $\bar{U}_-, \bar{U}_+, L$, $\dot{\xi}$, $\alpha$, $\beta$ and $\beta_0$.
	
	Furthermore, let $\left( \dot{U}_-, \dot{U}_+; \dot{\psi}', \dot{\xi} \right)$ be an approximating solution in Theorem \ref{thm: linearized solution}. Then the more accurate estimates hold:
	\begin{align*}
		&\| \psi' - \dot{\psi}' \|_{W_{\beta}^{1-\frac{1}{\beta}}(\Gamma_s)}
		\leq \frac{1}{2} (\sigma + \kappa)^{\frac{3}{2}},\\
		&\| U_- - (\bar{U}_- + \dot{U}_-) \|_{C^{1,\alpha}(\Omega_-)}
		\leq \frac{1}{2} (\sigma + \kappa)^{\frac{3}{2}},\\
		&\| U_+ \circ \Pi_{\psi}^{-1} - (\bar{U}_+ + \dot{U}_+) \|_{(\dot{\Omega}_+; \dot{\Gamma}_s)}
		\leq \frac{1}{2} (\sigma + \kappa)^{\frac{3}{2}}.
	\end{align*}
\end{thm}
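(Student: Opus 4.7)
The plan is to construct a Picard iteration close to the linear approximation $\bar U+\dot U$ supplied by Theorem~\ref{thm: linearized solution} and close by a contraction argument. Decompose the unknown as
\[
U_\pm = \bar U_\pm + \dot U_\pm + \delta U_\pm,\qquad \psi' = \dot\psi'+\delta\psi',\qquad \delta\xi := \psi(1)-\dot\xi,
\]
and pull downstream quantities back to the fixed rectangle $\dot\Omega_+$ via the map $\Pi_\psi$ of Section~2.3. I seek the correction $(\delta U_-,\delta U_+,\delta\psi',\delta\xi)$ in the two-scale ball
\[
\mathcal{X}_\varepsilon := \bigl\{\, \|\delta U_-\|_{C^{1,\alpha}(\Omega_-)}+\|\delta U_+\|_{(\dot\Omega_+;\dot\Gamma_s)}+\|\delta\psi'\|_{W_\beta^{1-1/\beta}(\dot\Gamma_s)}\le \tfrac12(\sigma+\kappa)^{3/2}, \ |\delta\xi|\le C_0(\sigma+\kappa) \,\bigr\},
\]
with $C_0$ to be fixed below. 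The two scales reflect the refined and coarse parts of the statement: the degeneracy $\partial_\xi R\sim\sigma+\kappa$ prevents the intercept from being pinned down to better than $O(\sigma+\kappa)$, while the quadratic Euler residual left by $\bar U+\dot U$ is $O((\sigma+\kappa)^2)$ and allows the other unknowns to sit in the smaller ball.

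Given $(\delta U^{(n)},\delta\psi'^{(n)},\delta\xi^{(n)})\in\mathcal{X}_\varepsilon$, the next iterate is built in three substeps. First, solve the upstream quasilinear hyperbolic system \eqref{NDF1}--\eqref{NDF5} in the region bounded by $\Gamma_1$ and the current shock $\Gamma_s^{(n)}$ with data $\bar U_-$ on $\Gamma_1$ and slip conditions on the walls; standard characteristics theory yields $\delta U_-^{(n+1)}\in C^{2,\alpha}$ with the required bound. Second, expand the R--H relations $G_1,\ldots,G_4=0$ around $(\bar U_+,\bar U_-)$ and $G_5=0$ around $(\bar U_+,\bar U_-;\bar\psi'_s)$: subtracting the linear system \eqref{eq:LRH.1}--\eqref{eq:LRH.2} produces $\delta\psi'^{(n+1)}$ together with Cauchy data for $(\delta p_+,\delta q_+,\delta S_+,\delta Z_+)$ on $\Gamma_s^{(n)}$ whose size is quadratic in $\sigma+\kappa+\|\delta U^{(n)}\|$.

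The third substep is the crux. As at the linear level, the downstream system splits into a hyperbolic block for $(q_+,S_+,Z_+)$, integrated along the straight characteristics $y_2=\mathrm{const}$, and an elliptic block for $(p_+,\theta_+)$ which is a small perturbation of a Cauchy--Riemann system in the rectangle. Once $\xi=\psi(1)$ is fixed the elliptic sub-problem is over-determined, and solvability amounts to a scalar compatibility relation $\mathcal F(\xi;\sigma,\kappa,\delta U^{(n)})=0$ treatable by the theory of \cite{FangXin2021CPAM,Grisvard1985} in the $W_\beta^1$ framework. Expanding around $\dot\xi$ gives $\mathcal F(\dot\xi;\sigma,\kappa,0)=R(\dot\xi;\sigma,\kappa)-P_*(\sigma,\kappa)+O((\sigma+\kappa)^2)=O((\sigma+\kappa)^2)$ by Theorem~\ref{thm: apprximating location of shock-front}, while
\[
\partial_\xi\mathcal F(\dot\xi;\sigma,\kappa,0) = -\sigma\dot K_1\Theta(\dot\xi)+\kappa\dot K_2+O\bigl((\sigma+\kappa)^2\bigr),
\]
whose modulus exceeds $\tfrac12\beta_0(\sigma+\kappa)$ by hypothesis \eqref{admissble condition}. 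The implicit function theorem uniquely solves $\mathcal F=0$ for $\xi^{(n+1)}=\dot\xi+\delta\xi^{(n+1)}$ with $|\delta\xi^{(n+1)}|\le C\beta_0^{-1}(\sigma+\kappa)$, fixing $C_0$ above; the elliptic system then delivers $(\delta p_+,\delta\theta_+)^{(n+1)}$ in $W_\beta^1$ with the required quadratic bound, and the hyperbolic block is solved by explicit integration, closing the step.

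Assembling the three bounds shows that the iteration maps $\mathcal{X}_\varepsilon$ into itself once $\sigma+\kappa$ is sufficiently small, and the identical estimates applied to differences of two iterates yield a contraction, with the factor $O((\sigma+\kappa)^{1/2})$ arising from the $\beta_0(\sigma+\kappa)$ divisor in the solvability step; the Banach fixed-point theorem then produces the sought transonic shock solution with the refined $\tfrac12(\sigma+\kappa)^{3/2}$ bounds and, \emph{a fortiori}, the coarse $O(\sigma+\kappa)$ bounds of the Main Theorem. The principal obstacle is precisely the degeneracy just described: because $|\partial_\xi\mathcal F|$ is only of order $\sigma+\kappa$, every nonlinear remainder entering $\mathcal F$ must be controlled at quadratic order $(\sigma+\kappa)^2$, which forces the intermediate radius $(\sigma+\kappa)^{3/2}$ for the non-intercept components and makes the $W_\beta^1$--$C^{2,\alpha}$ split norm essential, its trace theory on $\Gamma_s$ being just strong enough to absorb those nonlinear remainders with the correct scaling.
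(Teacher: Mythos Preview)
Your plan captures the heart of the argument correctly: the two-scale ball, the solvability functional whose $\xi$-derivative is $-\sigma\dot K_1\Theta(\dot\xi)+\kappa\dot K_2+O((\sigma+\kappa)^{3/2})$ and is bounded below by $\beta_0(\sigma+\kappa)$, the resulting $|\delta\xi|\le C\beta_0^{-1}(\sigma+\kappa)$, and the contraction with factor $O((\sigma+\kappa)^{1/2})$. These are exactly the mechanisms the paper uses.

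Where you differ from the paper is in the bookkeeping of the iteration. The paper does \emph{not} iterate on the supersonic state: since the upstream problem is purely hyperbolic with data on $\Gamma_1$, it is solved once and for all in the full rectangle $\Omega$ (Theorem~4.1), and the estimate $\|\delta U_--\dot U_-\|_{C^{1,\alpha}}\le C(\sigma+\kappa)^2$ is obtained directly by subtracting the linear problem. The shock then merely selects where to read off the trace $U_-(\psi(y_2),y_2)$. Consequently the paper's iteration map $J'$ acts only on $(\delta U_+,\delta\psi')$, with $\delta U_+:=U_+-\bar U_+$ lying in a ball centered at $\dot U_+$ rather than your decomposition $U_+=\bar U_++\dot U_++\delta U_+$; the intercept shift $\delta\xi^*$ is not carried as an iteration variable but is determined afresh at each step from the solvability condition. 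Also, the paper updates $\psi'$ \emph{last}, from $G_5$ evaluated at the already-computed $\delta U_+^*$, whereas you propose to extract $\delta\psi'^{(n+1)}$ from the R--H relations before solving the downstream system. Your organization can be made to work, but the paper's is cleaner: solving $U_-$ once avoids tracking its dependence on the moving shock, and treating $\delta\xi^*$ as an output of each step (rather than a state variable) makes the contraction estimate more transparent.
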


\section{The Initial Approximation for the Shock Solution}
%\redtext{To be completed:}
%admissible condition:
%\begin{equation}\label{adc}
%\sigma\dot{k}\Theta(\bar{\xi_*})+\kappa \left( \bar{f_1^+} - \frac{\bar{\rho}_+}{\bar{p}_+}
%\frac{\bar{p}_-}{\bar{\rho}_-} \bar{f_1^-}\right) \neq 0.
%\end{equation}
\subsection{The solution $\dot{U}_{-}$ in $\Omega$}
In the free boundary problem, it is easy to solve $\dot{U}_{-}$ in $\Omega$ since it satisfies a hyperbolic system with constant coefficients.
\begin{lem}\label{lem:3.2}
There exists a unique solution $\dot{U}_{-} \in C^{2,\alpha}(\Omega)$, which satisfies the linearized equations \eqref{eq:LEuler.1}-\eqref{eq:LEuler.5} in $\Omega$ with the boundary conditions \eqref{eq:LBoun.1}-\eqref{eq:LBoun.3}. Moreover, in the whole domain $\Omega$, it holds that
\begin{align}
 & \overline{\rho}_{-}\overline{q}_{-}\dot{q}_{-}+\dot{p}_{-}=0,\label{eq:q-}\\
 &\dot{S}_{-}(y_1,y_2)
 =\kappa\frac{1}{\overline{q}_-}\cdot\frac{\phi(\overline{T}_-)}{\overline{T}_-}{q_{\mr{e}}}\bar{Z}y_1,
 \label{eq:s-}\\
 &\dot{Z}_{-}(y_1,y_2)=-\kappa\frac{\phi(\overline{T}_-)}{\overline{q}_-}\bar{Z}y_1,\label{eq:z-}
\end{align}
and
\begin{eqnarray}
\norm{\dot{U}_{-}}_{\mcc^{2,\alpha}(\Omega)} & \leq & \dot{C}_{-}(\sigma+\kappa),\label{eq:U-}
\end{eqnarray}
where $\dot{C}_{-}$ is a constant depending on $\overline{U}_{-}$ and $L$.

Finally, for any $\xi\in\left(0,L\right)$, it holds that
\begin{eqnarray}
&&\int_{0}^{\xi}\sigma{\Theta}\left(y_1\right)\dif y_1+
\frac{1}{\overline{\rho}_{-}\overline{q}_{-}}\cdot\frac{\overline{M}_{-}^{2}-1}{\overline{\rho}_{-}\overline{q}_{-}^{2}}
\cdot\int_{0}^{1}\dot{p}_{-}\left(\xi,y_2\right)\dif y_2\nonumber\\
&=&\kappa \frac{1}{\gamma c_{\mr{v}}}\cdot\frac{1}{\overline{\rho}_{-}\overline{q}_{-}^2}\cdot
\frac{\phi(\overline{T}_-)}{\overline{T}_-}{q_{\mr{e}}}\bar{Z} \xi.
\label{eq:p-}
\end{eqnarray}
\end{lem}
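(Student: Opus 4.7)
The plan is to exploit the decoupled structure of the linearized reacting Euler system at the uniform supersonic background $\overline{U}_-$. Equations \eqref{eq:LEuler.4} and \eqref{eq:LEuler.5} are simple ODEs in $y_1$ with constant right-hand sides $\kappa\bar f_4^-$ and $-\kappa\bar f_5^-$; integrating them against the zero Cauchy data $\dot U_- = 0$ on $\Gamma_1$ immediately yields formulas \eqref{eq:s-} and \eqref{eq:z-}. Similarly, \eqref{eq:LEuler.3} says that $\overline{\rho}_-\overline{q}_-\dot q_- + \dot p_-$ is independent of $y_1$, and the zero inflow condition on $\Gamma_1$ forces the constant to vanish, which is \eqref{eq:q-}.

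The remaining work is to solve the $2\times 2$ subsystem \eqref{eq:LEuler.1}-\eqref{eq:LEuler.2} for $(\dot p_-, \dot\theta_-)$ with the prescribed boundary data. Solving algebraically for the $y_1$-derivatives gives
\[
\partial_{y_1}\dot p_- = \frac{\overline{\rho}_-^2\overline{q}_-^3}{1-\overline{M}_-^2}\bigl(\partial_{y_2}\dot\theta_- - \kappa\bar f_1^-\bigr), \qquad \partial_{y_1}\dot\theta_- = -\frac{1}{\overline{q}_-}\partial_{y_2}\dot p_-,
\]
a linear constant-coefficient system with characteristic speeds (viewing $y_1$ as evolution variable) $\pm\,\overline{\rho}_-\overline{q}_-/\sqrt{\overline{M}_-^2-1}$, which are real and of opposite sign since $\overline{M}_- > 1$. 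Thus exactly one scalar boundary condition is needed on each of $\Gamma_2^-$ and $\Gamma_4^-$, and this is furnished by \eqref{eq:LBoun.2}-\eqref{eq:LBoun.3}. I would then invoke the classical theory of linear hyperbolic initial-boundary value problems to obtain a unique solution. The regularity hypothesis \eqref{perturb4}, namely $\Theta(0)=\Theta'(0)=\Theta''(0)=0$, is precisely the second-order corner compatibility that matches the vanishing inflow data at $(0,0)$ and $(0,1)$ and so promotes the solution to $C^{2,\alpha}$. The size estimate \eqref{eq:U-} is then immediate by linearity, since the interior forcing is of size $\kappa$ and the boundary datum on $\Gamma_4^-$ is of size $\sigma$.

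The integral identity \eqref{eq:p-} falls out of integrating \eqref{eq:LEuler.2} over the rectangle $(0,\xi)\times(0,1)$. The $y_2$-integration of $\partial_{y_2}\dot\theta_-$ uses the slip conditions \eqref{eq:LBoun.2}-\eqref{eq:LBoun.3} to produce $\int_0^\xi \sigma\Theta(y_1)\,dy_1$, while the $y_1$-integration of $\partial_{y_1}\dot p_-$ uses the vanishing of $\dot p_-$ on $\Gamma_1$, leaving $\int_0^1\dot p_-(\xi,y_2)\,dy_2$. Substituting the explicit form of $\bar f_1^-$ on the right then gives exactly \eqref{eq:p-}.

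I do not anticipate a genuine obstacle here; the proof is essentially a bookkeeping exercise that leverages the triangular coupling of the system (the transport equations for $S,Z,q$ decouple from the pressure-angle block, and the pressure-angle block is strictly hyperbolic in $y_1$). The only mildly delicate point is verifying that the corner compatibility needed for $C^{2,\alpha}$ regularity is exactly what \eqref{perturb4} provides, but this is precisely why the assumption was placed on $\Theta$ at $y_1=0$.
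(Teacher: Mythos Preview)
Your proposal is correct and follows essentially the same path as the paper. The only cosmetic difference is that the paper introduces a potential $\phi_-$ with $\partial_{y_1}\phi_-=-\dot p_-$, $\partial_{y_2}\phi_-=\overline q_-\dot\theta_-$ (using \eqref{eq:LEuler.1}) and integrates the resulting second-order wave equation over $[0,\xi]\times[0,1]$, whereas you integrate \eqref{eq:LEuler.2} directly; the two computations are line-by-line identical once unpacked.
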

\begin{proof}
By \eqref{eq:LEuler.1}, there exists a function $\phi_{-}$, such that
\[
\partial_{y_1}\phi_{-}=-\dot{p}_{-},\qquad
\partial_{y_2}\phi_{-}=\overline{q}_{-}\dot{\theta}_{-}.
\]
Then substitute into \eqref{eq:LEuler.2} to get
\[
\frac{1}{\overline{q}_{-}}\partial_{y_2}^2\phi_{-}
-\frac{1}{\overline{\rho}_{-}\overline{q}_{-}}\cdot\frac{\overline{M}_{-}^{2}-1}
{\overline{\rho}_{-}\overline{q}_{-}^{2}}\partial_{y_1}^2\phi_{-}=
\kappa \frac{1}{\gamma c_{\mr{v}}}\cdot\frac{1}{\overline{\rho}_{-}\overline{q}_{-}^2}\cdot
\frac{\phi(\overline{T}_-)}{\overline{T}_-}{q_{\mr{e}}}\bar{Z},
\]
and the boundary conditions become
\begin{align*}
&\partial_{y_1}\phi_{-}=0, \quad  \text{on} \ \  {\Gamma}_{1}, \\
	& \partial_{y_2}\phi_{-}=0, \quad  \text{on} \ \  {\Gamma}_{2}, \\
	&\partial_{y_2}\phi_{-}=\sigma\overline{q}_{-}\Theta(y_1), \quad  \text{on} \ \  {\Gamma}_{4}.
\end{align*}

For any $\xi\in\left(0,L\right)$, integrating in the domain $[0,\xi]\times[0,1]$, it holds that
\begin{eqnarray*}
&&\int_{0}^{\xi}\int_{0}^{1}\kappa \frac{1}{\gamma c_{\mr{v}}}\cdot\frac{1}{\overline{\rho}_{-}\overline{q}_{-}^2}\cdot
\frac{\phi(\overline{T}_-)}{\overline{T}_-}{q_{\mr{e}}}\bar{Z}\dif y_2\dif y_1\\
&=&\int_{0}^{\xi}\int_{0}^{1}\frac{1}{\overline{q}_{-}}\partial_{y_2}^2\phi_{-}\dif y_2\dif y_1
-\int_{0}^{1}\int_{0}^{\xi}\frac{1}{\overline{\rho}_{-}\overline{q}_{-}}\cdot\frac{\overline{M}_{-}^{2}-1}
{\overline{\rho}_{-}\overline{q}_{-}^{2}}\partial_{y_1}^2\phi_{-}\dif y_1\dif y_2\\
&=&\int_{0}^{\xi}\sigma{\Theta}\left(y_1\right)\dif y_1
+\frac{1}{\overline{\rho}_{-}\overline{q}_{-}}\cdot
\frac{\overline{M}_{-}^{2}-1}{\overline{\rho}_{-}\overline{q}_{-}^{2}}\int_{0}^{1} \dot{p}_{-}(\xi,y_2)\dif y_2,
\end{eqnarray*}
which is the condition \eqref{eq:p-} exactly.
\end{proof}

\subsection{Reformulation of the linearized boundary conditions \eqref{eq:LRH.1}}
With $\dot{U}_{-}$ being determined, one then need to determine $\dot{U}_{+}$ and $\dot{\xi}$. The linearized boundary conditions \eqref{eq:LRH.1} can be rewritten as
\begin{equation}
B_{s}\cdot\begin{bmatrix}\dot{p}_{+}\\
\dot{q}_{+}\\
\dot{S}_{+}\\
\dot{Z}_{+}
\end{bmatrix}=\begin{bmatrix}\dot{g}_{1}\\
\dot{g}_{2}\\
\dot{g}_{3}\\
\dot{g}_{4}
\end{bmatrix},\label{eq:qps+}
%\qquad \dot{Z}_{+}=\dot{Z}_{-},
\end{equation}
where $\dot{g}_{j}\defs-\beta_{j}^{-}\cdot\dot{U}_{-}$, $(j=1,2,3)$,
and the coefficient matrix is given by
\[
B_{s}\defs\frac{1}{\overline{\rho}_{+}\overline{q}_{+}}\left[\overline{p}\right]\cdot
\begin{bmatrix}
-\displaystyle\frac{1}{\overline{\rho}_{+}\overline{c}_{+}^{2}} &-\displaystyle\frac{1}{\overline{q}_{+}}& \displaystyle\frac{1}{\gamma c_{\mr{v}}} & 0\\
1-\displaystyle\frac{\overline{p}_{+}}{\overline{\rho}_{+}\overline{c}_{+}^{2}}
&\overline{\rho}_{+}\overline{q}_{+}-\displaystyle\frac{\overline{p}_{+}}{\overline{q}_{+}} & \displaystyle\frac{\overline{p}_{+}}{\gamma c_{\mr{v}}} & 0\\
\displaystyle\frac{\overline{q}_{+}}{\left[\overline{p}\right]}
&\displaystyle\frac{\overline{\rho}_{+}\overline{q}_{+}^{2}}{\left[\overline{p}\right]} & \displaystyle\frac{1}{\left(\gamma-1\right)c_{\mr{v}}}\cdot\overline{p}_{+}\cdot\frac{\overline{q}_{+}}{\left[\overline{p}\right]} & 0 \\
0 & 0 & 0 & \displaystyle \frac{\overline{\rho}_{+}\overline{q}_{+}}{\left[\overline{p}\right]}
\end{bmatrix}.
\]
\begin{lem}
It follows from the boundary conditions \eqref{eq:qps+} that
\begin{eqnarray}
\det B_{s} & = & \frac{1}{\left(\gamma-1\right)c_{\mr{v}}}\cdot\frac{\left[\overline{p}\right]^{2}
\cdot\overline{p}_{+}}{\left(\overline{\rho}_{+}\overline{q}_{+}\right)^{3}}
\cdot\left(1-\overline{M}_{+}^{2}\right)\neq 0,\label{eq:Bs}\\
\dot{p}_{+} & = & \dot{g}_{1}^{\#}\defs\frac{\overline{\rho}_{+}\overline{q}_{+}^{2}}{\overline{M}_{+}^{2}-1}
\cdot\frac{\overline{M}_{-}^{2}-1}{\overline{\rho}_{-}\overline{q}_{-}^{2}}
\cdot\left(1-\dot{K}_1\right)\cdot\dot{p}_{-}\nonumber\\
&&\hspace{3em}+\frac{1}{\gamma c_{\mr{v}}}\cdot\frac{\overline{\rho}_{+}\overline{q}_{+}^{2}}{\overline{M}_{+}^{2}-1}
\cdot\left(
\frac{\overline{T}_{-}}{\overline{T}_{+}}-1+\dot{K}_1\right)\cdot\dot{S}_{-},\label{eq:p+}\\
\dot{q}_{+} & = & \dot{g}_{2}^{\#}\defs\frac{\overline{M}_{-}^{2}-1}{\overline{\rho}_{-}\overline{q}_{-}^{2}}\cdot\left\{ \left[\overline{p}\right]-\frac{\overline{\rho}_{+}\overline{q}_{+}^{2}}{\overline{M}_{+}^{2}-1}
\left(1-\dot{K}_1\right)\right\}\cdot\dot{p}_{-}\nonumber\\
&&\hspace{3em}-\frac{1}{\gamma c_{\mr{v}}}\cdot\left\{
\left[\overline{p}\right]+\frac{\overline{\rho}_{+}\overline{q}_{+}^{2}}{\overline{M}_{+}^{2}-1}
\left(\frac{\overline{T}_{-}}{\overline{T}_{+}}-1+\dot{K}_1\right)\right\}\cdot\dot{S}_{-},\label{eq:q+}\\
\dot{S}_{+} & = & \dot{g}_{3}^{\#}\defs-\left(\gamma-1\right)c_{\mr{v}}
\cdot\frac{\overline{M}_{-}^{2}-1}{\overline{\rho}_{-}\overline{q}_{-}^{2}}
\cdot\frac{\left[\overline{p}\right]}{\overline{p}_{+}}\cdot\dot{p}_{-}\nonumber\\
&&\hspace{3em}+\left(\frac{\overline{T}_{-}}{\overline{T}_{+}}+\frac{\gamma-1}{\gamma}\cdot
\frac{\left[\overline{p}\right]}{\overline{p}_{+}}\right)\cdot\dot{S}_{-},\label{eq:s+}\\
\dot{Z}_{+} & = & \dot{g}_{4}^{\#}\defs \dot{Z}_-,
\end{eqnarray}
where $\dot{K}_1\defs\left[\overline{p}\right]\left(\displaystyle\frac{\gamma-1}{\gamma\overline{p}_{+}}
+\frac{1}{\overline{\rho}_{+}\overline{q}_{+}^{2}}\right)>0$.
\end{lem}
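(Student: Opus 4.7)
My plan is to prove the lemma by direct linear-algebraic manipulation of the $4\times 4$ system \eqref{eq:qps+}, exploiting the block structure of the matrix $B_s$ and then applying Cramer's rule once the determinant is in hand.

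First I would observe that $B_s$ is block-triangular: the fourth column has zeros in rows $1,2,3$ and the fourth row has zeros in columns $1,2,3$, while the $(4,4)$-entry equals $\frac{[\bar{p}]}{\bar\rho_+\bar q_+}\cdot\frac{\bar\rho_+\bar q_+}{[\bar p]}=1$. Consequently $\det B_s=\det(B_s)_{3\times 3}$, where $(B_s)_{3\times 3}$ is the top-left block governing $(\dot p_+,\dot q_+,\dot S_+)$. Moreover, the fourth equation immediately gives $\dot Z_+ = \dot g_4 = -\beta_4^-\cdot\dot U_- = \dot Z_-$, which is precisely $\dot g_4^\#$. This reduces the lemma to a $3\times 3$ problem.

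Next I would compute $\det(B_s)_{3\times 3}$ by row reduction. Factoring $\frac{[\bar p]}{\bar\rho_+\bar q_+}$ from each of the three rows produces a numerical matrix $M$; performing $R_2\mapsto R_2-\bar p_+\, R_1$ on $M$ clears the third column in the second row and yields $R_2 = (1,\bar\rho_+\bar q_+,0)$. Cofactor expansion along the third column then leaves two $2\times 2$ determinants: the one multiplying $1/(\gamma c_{\mathrm v})$ vanishes (its rows are proportional), while the other reduces, using $\bar c_+^2 = \gamma\bar p_+/\bar\rho_+$ and the definition $\bar M_+^2=\bar q_+^2/\bar c_+^2$, to $(1-\bar M_+^2)/\bar q_+$. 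Reassembling the scalar prefactor $([\bar p]/(\bar\rho_+\bar q_+))^3$ gives the claimed formula \eqref{eq:Bs}, and subsonicity of $\bar U_+$ secures non-vanishing.

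Then I would apply Cramer's rule to read off $\dot p_+,\dot q_+,\dot S_+$. The crucial preparatory step is to express the right-hand side in terms of $\dot p_-$ and $\dot S_-$ alone: substituting $\dot q_- = -\dot p_-/(\bar\rho_-\bar q_-)$ from \eqref{eq:q-} into $\dot g_j=-\beta_j^-\cdot\dot U_-$ (with the explicit $\beta_j^-$ from Lemma~\ref{lem:3.1}) and using $-\frac{1}{\bar\rho_-\bar c_-^2}+\frac{1}{\bar\rho_-\bar q_-^2}=\frac{1-\bar M_-^2}{\bar\rho_-\bar q_-^2}$ gives a neat $3\times 2$ coefficient structure for the data. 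Replacing each column of $(B_s)_{3\times 3}$ by $(\dot g_1,\dot g_2,\dot g_3)^\top$ and again performing the elimination $R_2\mapsto R_2-\bar p_+ R_1$ produces $2\times 2$ determinants that evaluate to the displayed expressions \eqref{eq:p+}--\eqref{eq:s+} after collecting the background identities $\bar\rho_-\bar q_- = \bar\rho_+\bar q_+$, $\bar c_+^2=\gamma\bar p_+/\bar\rho_+$ and the definition of $\dot K_1$.

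The main obstacle is purely bookkeeping: keeping the constant prefactors straight and packaging the coefficients of $\dot p_-$ and $\dot S_-$ in the form that exhibits the combination $\bar M_-^2-1$ and the shock-strength constant $\dot K_1$. Once the determinant is factored out cleanly, each of \eqref{eq:p+}, \eqref{eq:q+}, \eqref{eq:s+} follows by a direct simplification of the $2\times 2$ Cramer determinants, and no delicate estimate is needed.
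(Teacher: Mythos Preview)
Your proposal is correct and follows a genuinely different route from the paper. The paper never invokes Cramer's rule; instead it performs a sequential elimination on the three scalar equations: it first rewrites the $j=1$ condition using $\bar\rho_-\bar q_-=\bar\rho_+\bar q_+$ and \eqref{eq:q-} to obtain an identity for $\frac{1}{\bar q_+}\dot q_++\frac{1}{\bar\rho_+\bar c_+^2}\dot p_+-\frac{1}{\gamma c_{\mr v}}\dot S_+$; then combines this with the $j=2$ condition to get $\bar\rho_+\bar q_+\dot q_++\dot p_+$ in terms of $\dot p_-,\dot S_-$; inserts that into the $j=3$ condition to isolate $\dot S_+$ (yielding \eqref{eq:s+}); back-substitutes to extract $\dot p_+$ (yielding \eqref{eq:p+}); and finally reads off $\dot q_+$. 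In particular, the paper's proof never actually computes $\det B_s$ --- the formula \eqref{eq:Bs} is stated but left implicit. Your block-triangular reduction followed by the row operation $R_2\mapsto R_2-\bar p_+R_1$ and cofactor expansion supplies a clean, self-contained verification of \eqref{eq:Bs} and then treats all three unknowns on an equal footing via Cramer determinants. The paper's substitution approach has the modest advantage of producing the intermediate relation $\bar\rho_+\bar q_+\dot q_++\dot p_+=\frac{[\bar p](\bar M_-^2-1)}{\bar\rho_-\bar q_-^2}\dot p_--\frac{[\bar p]}{\gamma c_{\mr v}}\dot S_-$, which is reused later; your approach is more systematic and makes the non-degeneracy \eqref{eq:Bs} an explicit step rather than an afterthought.
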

\begin{proof}
The boundary conditions \eqref{eq:qps+} for $j=1$ can be written as
\begin{eqnarray*}
 && \frac{1}{\overline{\rho}_{+}\overline{q}_{+}^{2}}\dot{q}_{+}+
 \frac{1}{\overline{\rho}_{+}^{2}\overline{c}_{+}^{2}\overline{q}_{+}}\dot{p}_{+}-\frac{1}{\gamma c_{\mr{v}}}\cdot\frac{1}{\overline{\rho}_{+}\overline{q}_{+}}\dot{S}_{+}\\
&= & \frac{1}{\overline{\rho}_{-}\overline{q}_{-}^{2}}\dot{q}_{-}+
\frac{1}{\overline{\rho}_{-}^{2}\overline{c}_{-}^{2}\overline{q}_{-}}\dot{p}_{-}-\frac{1}{\gamma c_{\mr{v}}}\cdot\frac{1}{\overline{\rho}_{-}\overline{q}_{-}}\dot{S}_{-},
\end{eqnarray*}
which yield, by  $\overline{\rho}_{+}\overline{q}_{+}=\overline{\rho}_{-}\overline{q}_{-}$ and \eqref{eq:q-}, that
\begin{eqnarray}
\frac{1}{\overline{q}_{+}}\dot{q}_{+}+\frac{1}{\overline{\rho}_{+}\overline{c}_{+}^{2}}\dot{p}_{+}-\frac{1}{\gamma c_{\mr{v}}}\dot{S}_{+} & = & \frac{1}{\overline{q}_{-}}\dot{q}_{-}+\frac{1}{\overline{\rho}_{-}\overline{c}_{-}^{2}}\dot{p}_{-}
-\frac{1}{\gamma c_{\mr{v}}}\dot{S}_{-}\nonumber \\
 & = & \frac{1}{\overline{\rho}_{-}\overline{q}_{-}^{2}}\left(\overline{M}_{-}^{2}-1\right)\dot{p}_{-}
 -\frac{1}{\gamma c_{\mr{v}}}\dot{S}_{-}.\label{eq:2.41.1}
\end{eqnarray}

Similarly, the boundary condition \eqref{eq:qps+} for $j=2$ leads to
\begin{eqnarray*}
 && \left(\overline{\rho}_{+}\overline{q}_{+}\dot{q}_{+}+\dot{p}_{+}\right)-\overline{p}_{+}
 \left(\frac{1}{\overline{q}_{+}}
 \dot{q}_{+}+\frac{1}{\overline{\rho}_{+}\overline{c}_{+}^{2}}\dot{p}_{+}-\frac{1}{\gamma c_{\mr{v}}}\dot{S}_{+}\right)\\
&= & \left(\overline{\rho}_{-}\overline{q}_{-}\dot{q}_{-}+\dot{p}_{-}\right)-\overline{p}_{-}
\left(\frac{1}{\overline{q}_{-}}\dot{q}_{-}+\frac{1}{\overline{\rho}_{-}\overline{c}_{-}^{2}}\dot{p}_{-}
-\frac{1}{\gamma c_{\mr{v}}}\dot{S}_{-}\right).
\end{eqnarray*}
Then, combining \eqref{eq:2.41.1} with \eqref{eq:q-} yields
\begin{equation}
\overline{\rho}_{+}\overline{q}_{+}\dot{q}_{+}+\dot{p}_{+}
=\frac{\left[\overline{p}\right]}{\overline{\rho}_{-}\overline{q}_{-}^{2}}\left(\overline{M}_{-}^{2}-1\right)
\dot{p}_{-}-\frac{\left[\overline{p}\right]}{\gamma c_{\mr{v}}}\dot{S}_{-}.\label{eq:2.41.2}
\end{equation}

The boundary condition \eqref{eq:qps+} for $j=3$ reads
\begin{eqnarray*}
 && \overline{q}_{+}\dot{q}_{+}+ \frac{1}{\overline{\rho}_{+}}\dot{p}_{+}
 +\frac{1}{\left(\gamma-1\right)c_{\mr{v}}}
 \cdot\frac{\overline{p}_{+}}{\overline{\rho}_{+}}\dot{S}_{+}\\
&= & \overline{q}_{-}\dot{q}_{-}+ \frac{1}{\overline{\rho}_{-}}\dot{p}_{-}
+\frac{1}{\left(\gamma-1\right)c_{\mr{v}}}
\cdot\frac{\overline{p}_{-}}{\overline{\rho}_{-}}\dot{S}_{-},
\end{eqnarray*}
which yields, by \eqref{eq:q-} again, that
\[
\frac{1}{\overline{\rho}_{+}}\left(\overline{\rho}_{+}\overline{q}_{+}\dot{q}_{+}+\dot{p}_{+}\right)
+\frac{1}{\left(\gamma-1\right)c_{\mr{v}}}\cdot\frac{\overline{p}_{+}}{\overline{\rho}_{+}}\dot{S}_{+}
=\frac{1}{\left(\gamma-1\right)c_{\mr{v}}}
\cdot\frac{\overline{p}_{-}}{\overline{\rho}_{-}}\dot{S}_{-}.
\]
Hence, this and \eqref{eq:2.41.2} yield that
\begin{equation}
\frac{1}{c_{\mr{v}}}\cdot\dot{S}_{+}=-\left(\gamma-1\right)\cdot\frac{\overline{M}_{-}^{2}-1}
{\overline{\rho}_{-}\overline{q}_{-}^{2}}\cdot\frac{\left[\overline{p}\right]}
{\overline{p}_{+}}\dot{p}_{-}+\frac{1}{c_{\mr{v}}}\left(\frac{\overline{\rho}_{+}}{\overline{p}_{+}}\cdot
\frac{\overline{p}_{-}}{\overline{\rho}_{-}}+\frac{\gamma-1}{\gamma}\cdot
\frac{\left[\overline{p}\right]}{\overline{p}_{+}}\right)\dot{S}_{-},
\label{eq:2.41.3}
\end{equation}
which is \eqref{eq:s+} exactly since $\overline{T}_{\pm} =\displaystyle \frac{1}{\mathcal{R}}  \frac{\overline{p}_{\pm}} {\overline{\rho}_{\pm}}$ for the constant $\mathcal{R}$.

Then, substituting \eqref{eq:2.41.3} into \eqref{eq:2.41.1} gives
\begin{eqnarray*}
&&\frac{1}{\overline{\rho}_{+}\overline{q}_{+}^{2}}\left(\overline{\rho}_{+}\overline{q}_{+}\dot{q}_{+}
+\overline{M}_{+}^{2}\dot{p}_{+}\right)\\
&=&\frac{1}{\overline{q}_{+}}\dot{q}_{+}+\frac{1}{\overline{\rho}_{+}\overline{c}_{+}^{2}}\dot{p}_{+}\\
& = & \frac{1}{\gamma c_{\mr{v}}}\dot{S}_{+}+\frac{1}{\overline{\rho}_{-}\overline{q}_{-}^{2}}
\left(\overline{M}_{-}^{2}-1\right)\dot{p}_{-}-\frac{1}{\gamma c_{\mr{v}}}\dot{S}_{-}\\
 & = & \left(1-\frac{\gamma-1}{\gamma}\cdot\frac{\left[\overline{p}\right]}{\overline{p}_{+}}\right)
 \cdot\frac{\overline{M}_{-}^{2}-1}{\overline{\rho}_{-}\overline{q}_{-}^{2}}\dot{p}_{-}
 +\frac{1}{\gamma c_{\mr{v}}}\left(\frac{\overline{\rho}_{+}}{\overline{p}_{+}}\cdot
\frac{\overline{p}_{-}}{\overline{\rho}_{-}}+\frac{\gamma-1}{\gamma}\cdot
\frac{\left[\overline{p}\right]}{\overline{p}_{+}}-1\right)\dot{S}_{-}.
\end{eqnarray*}
This, together with \eqref{eq:2.41.2}, gives
\begin{eqnarray}
\frac{\overline{M}_{+}^{2}-1}{\overline{\rho}_{+}\overline{q}_{+}^{2}}\cdot\dot{p}_{+}
&=&\frac{\overline{M}_{-}^{2}-1}{\overline{\rho}_{-}\overline{q}_{-}^{2}}\cdot\dot{p}_{-}\left\{ 1-\left[\overline{p}\right]\left(\frac{\gamma-1}{\gamma\overline{p}_{+}}+\frac{1}
{\overline{\rho}_{+}\overline{q}_{+}^{2}}\right)\right\}\nonumber\\
&&+\frac{1}{\gamma c_{\mr{v}}}\cdot\dot{S}_{-}\left\{\frac{\overline{\rho}_{+}}{\overline{p}_{+}}\cdot
\frac{\overline{p}_{-}}{\overline{\rho}_{-}}-1+\left[\overline{p}\right]
\left(\frac{\gamma-1}{\gamma\overline{p}_{+}}+\frac{1}
{\overline{\rho}_{+}\overline{q}_{+}^{2}}\right)\right\},\label{eq:2.41.4}
\end{eqnarray}
which yields \eqref{eq:p+}.

Finally, substituting \eqref{eq:2.41.4} into \eqref{eq:2.41.2} shows \eqref{eq:q+}. This completes the proof.
\end{proof}

\subsection{Determine $\dot{U}_{+}$ and $\dot{\xi}$ }
Now we determine $\dot{U}_{+}$ and $\dot{\xi}$. The key
step is determining $\dot{\xi}$ and $(\dot{p}_{+},\dot{\theta}_{+})$
via the elliptic system of first order consisting in the equations
\eqref{eq:LEuler.6}-\eqref{eq:LEuler.7} together with the boundary conditions \eqref{eq:LBoun.4}-\eqref{eq:LBoun.6}, and \eqref{eq:p+}. Then we have the following lemma.
\begin{lem}
Given $\dot{\xi} \in(0,L)$, there exists a unique solution $(\dot{p}_{+},\dot{\theta}_{+})$
to the boundary value problem consisting of the equations \eqref{eq:LEuler.6}-\eqref{eq:LEuler.7}
together with the boundary conditions \eqref{eq:LBoun.4}-\eqref{eq:LBoun.6}, and \eqref{eq:p+}, if and only if
\begin{eqnarray}
&&\frac{1}{\overline{\rho}_{+}\overline{q}_{+}}\cdot\frac{1-\overline{M}_{+}^{2}}{\overline{\rho}_{+}
\overline{q}_{+}^{2}}\cdot\int_{0}^{1}
 P_{\mr{e}}(y_2; \sigma, \kappa)  \dif y_2\nonumber\\
&=&\sigma \left(\int_{0}^{L}{\Theta}\left(y_1\right)\dif y_1
-\dot{K}_1 \int_{0}^{\dot{\xi}}{\Theta}\left(y_1\right)\dif y_1 \right)
+\kappa \left( -\bar{f}_1^+ L + \dot{K}_2 \dot{\xi} \right),
\label{eq:2.43}
\end{eqnarray}
where
\begin{align*}
\dot{K}_1 &\defs \left[\overline{p}\right]\left(\frac{\gamma-1}{\gamma\overline{p}_{+}}
+\frac{1}{\overline{\rho}_{+}\overline{q}_{+}^{2}}\right)>0, \\
\dot{K}_2 &\defs \frac{1}{\gamma c_{\mr{v}}} \frac{1}{\bar{T}_+} q_{\mr{e}} \bar{Z}
(\frac{\phi(\overline{T}_+)}{\bar{\rho}_+ \bar{q}^2_+} - \frac{\phi(\overline{T}_-)}{\bar{\rho}_- \bar{q}^2_-})>0.
\end{align*}

Furthermore, if $P_\mr{e}$ has the form $P_\mr{e}(y_2; \sigma, \kappa)=\sigma P_\sigma(y_2) + \kappa P_\kappa(y_2)$ for $\beta>2$, the solution $(\dot{p}_{+},\dot{\theta}_{+})$ satisfies the following estimate:
\begin{align}\label{eq:ptheta+}
 \norm{\dot{p}_{+}}_{W_{\beta}^{1}(\dot{\Omega}_{+})}
+\norm{\dot{\theta}_{+}}_{W_{\beta}^{1}(\dot{\Omega}_{+})}
\leq \dot{C}_{+}(\sigma+\kappa),
\end{align}
where the constant $\dot{C}_{+}$ depends on $\overline{U}_{+}, L, \dot{\xi}$ and $\alpha$.
\end{lem}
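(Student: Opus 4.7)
The plan is to derive the solvability condition by integrating the system and then reduce the problem to a classical Neumann problem for an elliptic equation. I would proceed in three steps.

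For the necessity of \eqref{eq:2.43}, I would integrate equation \eqref{eq:LEuler.7} over the rectangle $\dot{\Omega}_{+}=(\dot{\xi},L)\times(0,1)$. The $\partial_{y_2}\dot{\theta}_{+}$ term, evaluated via Fubini, contributes $\int_{\dot{\xi}}^{L}(\dot{\theta}_{+}(y_1,1)-\dot{\theta}_{+}(y_1,0))\,dy_1$, which by \eqref{eq:LBoun.5}--\eqref{eq:LBoun.6} equals $\int_{\dot{\xi}}^{L}\sigma\Theta(y_1)\,dy_1$. The $\partial_{y_1}\dot{p}_{+}$ term contributes $\int_{0}^{1}(\dot{p}_{+}(L,y_2)-\dot{p}_{+}(\dot{\xi},y_2))\,dy_2$, which by \eqref{eq:LBoun.4} and \eqref{eq:p+} splits into the $P_{\mathrm{e}}$ integral and contributions linear in $\dot{p}_{-}(\dot{\xi},\cdot)$ and $\dot{S}_{-}(\dot{\xi},\cdot)$. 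The source term contributes $\kappa\bar{f}_{1}^{+}(L-\dot{\xi})$. I would then invoke the upstream integral identity \eqref{eq:p-} from Lemma \ref{lem:3.2} to rewrite $\int_{0}^{1}\dot{p}_{-}(\dot{\xi},y_2)\,dy_2$ in terms of $\int_{0}^{\dot{\xi}}\sigma\Theta\,dy_1$ and $\kappa\dot{\xi}$, and substitute \eqref{eq:s-} to evaluate $\dot{S}_{-}(\dot{\xi},y_2)$ explicitly. After collecting terms, the coefficient of $\int_{0}^{\dot{\xi}}\sigma\Theta\,dy_1$ simplifies to $-\dot{K}_1$ (using the defining identity for $\dot{K}_1$), while the coefficients of $\kappa\dot{\xi}$ combine to produce exactly $\dot{K}_2$, yielding \eqref{eq:2.43}.

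For sufficiency, I would introduce a potential $\Phi_{+}$ defined by $\partial_{y_1}\Phi_{+}=\dot{p}_{+}$, $\partial_{y_2}\Phi_{+}=-\overline{q}_{+}\dot{\theta}_{+}$, which is well defined because \eqref{eq:LEuler.6} is exactly the compatibility $\partial_{y_2}\partial_{y_1}\Phi_{+}=\partial_{y_1}\partial_{y_2}\Phi_{+}$. Substituting into \eqref{eq:LEuler.7} yields a single scalar Poisson-type equation
\begin{equation*}
\frac{1}{\overline{q}_{+}}\partial_{y_2}^{2}\Phi_{+}+\frac{1}{\overline{\rho}_{+}\overline{q}_{+}}\cdot\frac{1-\overline{M}_{+}^{2}}{\overline{\rho}_{+}\overline{q}_{+}^{2}}\partial_{y_1}^{2}\Phi_{+}=-\kappa\bar{f}_{1}^{+},
\end{equation*}
which is uniformly elliptic on $\dot{\Omega}_{+}$ since $\overline{M}_{+}<1$. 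All four boundary conditions (from \eqref{eq:LBoun.4}, \eqref{eq:LBoun.5}, \eqref{eq:LBoun.6}, and \eqref{eq:p+}) prescribe the normal derivative of $\Phi_{+}$, so this is a pure Neumann problem on the rectangle. The compatibility condition for such a problem is exactly \eqref{eq:2.43} by the first step, hence when \eqref{eq:2.43} holds, the Grisvard theory for elliptic problems on polygonal domains yields a unique $\Phi_{+}$ modulo an additive constant, and therefore a unique pair $(\dot{p}_{+},\dot{\theta}_{+})$.

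The estimate \eqref{eq:ptheta+} would then follow from $W_{\beta}^{1}$ elliptic regularity for the Neumann problem on a rectangle with $\beta>2$, since at this weight the standard trace theory handles the corner singularities of $\dot{\Omega}_{+}$. All boundary data ($\sigma\Theta$, $0$ on the top/bottom, $P_{\mathrm{e}}=\sigma P_{\sigma}+\kappa P_{\kappa}$ on $\Gamma_{3}$, and $\dot{g}_{1}^{\#}$ on $\dot{\Gamma}_{s}$, which by \eqref{eq:p+} and \eqref{eq:U-} is linear in $\sigma+\kappa$) together with the source $\kappa\bar{f}_{1}^{+}$ are bounded by $\sigma+\kappa$, so the linear estimate \eqref{eq:ptheta+} follows. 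I expect the main obstacle to be the careful algebraic bookkeeping in the necessity step: the coefficient $\dot{K}_2$ contains the exact combination $\frac{\phi(\overline{T}_{+})}{\overline{\rho}_{+}\overline{q}_{+}^{2}}-\frac{\phi(\overline{T}_{-})}{\overline{\rho}_{-}\overline{q}_{-}^{2}}$, which emerges only after cancellations among the source term $\kappa\bar{f}_{1}^{+}(L-\dot{\xi})$, the $\dot{S}_{-}$-contribution entering through $\dot{g}_{1}^{\#}$, and the $\kappa$-part of \eqref{eq:p-}.
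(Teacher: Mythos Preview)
Your proposal is correct and follows essentially the same approach as the paper: integrate \eqref{eq:LEuler.7} over $\dot{\Omega}_{+}$ to obtain the compatibility relation, then substitute \eqref{eq:p+}, \eqref{eq:p-}, and \eqref{eq:s-} to reduce to \eqref{eq:2.43}, and finally invoke elliptic estimates for \eqref{eq:ptheta+}. The only difference is one of presentation: the paper states the solvability condition \eqref{eq:2.44} as a known fact for the first-order elliptic system (citing \cite{FangXin2021CPAM}, Appendix A) and then carries out the algebraic reduction, whereas you explicitly introduce the potential $\Phi_{+}$ to recast the problem as a scalar Neumann problem before invoking Grisvard-type regularity---this is the same mechanism spelled out in slightly more detail, not a different route.
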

\begin{proof}
By the solvability condition for the elliptic system \eqref{eq:LEuler.6}-\eqref{eq:LEuler.7} with the boundary conditions \eqref{eq:LBoun.4}-\eqref{eq:LBoun.6}, and \eqref{eq:p+}, it holds that
\begin{eqnarray}
&&\kappa\frac{1}{\gamma c_{\mr{v}}}\cdot\frac{1}{\overline{\rho}_{+}\overline{q}_{+}^2}\cdot
\frac{\phi(\overline{T}_+)}{\overline{T}_+}{q_{\mr{e}}}\bar{Z}(L-\dot{\xi})\nonumber\\
&=&\frac{1}{\overline{\rho}_{+}\overline{q}_{+}}\cdot\frac{1-\overline{M}_{+}^{2}}
{\overline{\rho}_{+}\overline{q}_{+}^{2}}\int_{0}^{1}\left(\dot{p}_{+}
(\dot{\xi},y_2)- P_{\mr{e}}(y_2;\sigma, \kappa)\right)\dif y_2\nonumber\\
&&+\int_{\dot{\xi}}^{L}
\sigma{\Theta}\left(y_1\right)\dif y_1.\label{eq:2.44}
\end{eqnarray}

Then by \eqref{eq:s-}, \eqref{eq:p-}, \eqref{eq:p+}, and recalling that  $\overline{\rho}_{+}\overline{q}_{+}=\overline{\rho}_{-}\overline{q}_{-}$, one gets that
\begin{eqnarray*}
&&\frac{1}{\overline{\rho}_{+}\overline{q}_{+}}\cdot\frac{1-\overline{M}_{+}^{2}}{\overline{\rho}_{+}
\overline{q}_{+}^{2}}\int_{0}^{1}\dot{p}_{+}(\dot{\xi},y_2)\dif y_2 \\
& = & \frac{1}{\overline{\rho}_{-}\overline{q}_{-}}\cdot\frac{1-\overline{M}_{-}^{2}}{\overline{\rho}_{-}
\overline{q}_{-}^{2}}\cdot\left(1-\dot{K}_1\right)\int_{0}^{1}\dot{p}_{-}(\dot{\xi},y_2)\dif y_2\\
&&-\frac{1}{\gamma c_{\mr{v}}}\cdot\frac{1}{\overline{\rho}_{-}\overline{q}_{-}}
\cdot\left(\frac{\overline{T}_{-}}{\overline{T}_{+}}-1+\dot{K}_1\right)\int_{0}^{1}\dot{S}_{-}(\dot{\xi}, y_2)\dif y_2\\
 & = &  \left(1-\dot{K}_1\right)
 \int_{0}^{\dot{\xi}}\sigma{\Theta}\left(y_1\right)\dif y_1
-\kappa\frac{1}{\gamma c_{\mr{v}}}\cdot\frac{1}{\overline{\rho}_{-}\overline{q}_{-}^2}\cdot
\frac{\phi(\overline{T}_-)}{\overline{T}_-}
\left(1-\dot{K}_1\right){q_{\mr{e}}}\bar{Z}\dot{\xi}\\
 &&-\kappa\frac{1}{\gamma c_{\mr{v}}}\cdot\frac{1}{\overline{\rho}_{-}\overline{q}_{-}^2}
\cdot\frac{\phi(\overline{T}_-)}{\overline{T}_-}
\left(\frac{\overline{T}_{-}}{\overline{T}_{+}}-1+\dot{K}_1\right){q_{\mr{e}}}\bar{Z}\dot{\xi}\\
&=& \left(1-\dot{K}_1 \right)
 \int_{0}^{\dot{\xi}}\sigma{\Theta}\left(y_1\right)\dif y_1
 -\kappa\frac{1}{\gamma c_{\mr{v}}}\cdot\frac{1}{\overline{\rho}_{-}\overline{q}_{-}^2}
\cdot\frac{\overline{T}_{-}}{\overline{T}_{+}}\cdot \frac{\phi(\overline{T}_-)}{\overline{T}_-}{q_{\mr{e}}}\bar{Z}\dot{\xi}.
\end{eqnarray*}
Therefore, \eqref{eq:2.44} yields that
\begin{eqnarray*}
&&
\frac{1}{\overline{\rho}_{+}\overline{q}_{+}}\cdot\frac{1-\overline{M}_{+}^{2}}
{\overline{\rho}_{+}\overline{q}_{+}^{2}}\int_{0}^{1}
P_{\mr{e}}(y_2; \sigma, \kappa) \dif y_2\\
&=&\left(1-\dot{K}_1\right)
 \int_{0}^{\dot{\xi}}\sigma{\Theta}\left(y_1\right)\dif y_1+\int_{\dot{\xi}}^{L}
\sigma{\Theta}\left(y_1\right)\dif y_1\\
&&\hspace{1em}-\kappa\frac{1}{\gamma c_{\mr{v}}}\cdot\frac{1}{\overline{\rho}_{-}\overline{q}_{-}^2}
\cdot\frac{\overline{T}_{-}}{\overline{T}_{+}}\cdot \frac{\phi(\overline{T}_-)}{\overline{T}_-}{q_{\mr{e}}}\bar{Z}\dot{\xi}-\kappa\frac{1}{\gamma c_{\mr{v}}}\cdot\frac{1}{\overline{\rho}_{+}\overline{q}_{+}^2}\cdot
\frac{\phi(\overline{T}_+)}{\overline{T}_+}{q_{\mr{e}}}\bar{Z}(L-\dot{\xi}) \\
&=& \sigma \left(\int_{0}^{L}{\Theta}\left(y_1\right)\dif y_1
-\dot{K}_1 \int_{0}^{\dot{\xi}}{\Theta}\left(y_1\right)\dif y_1 \right) \\
&& \hspace{1em} -\kappa \frac{1}{\gamma c_{\mr{v}}}\cdot\frac{1}{\overline{\rho}_{+}\overline{q}_{+}^2}\cdot
\frac{\phi(\overline{T}_+)}{\overline{T}_+}{q_{\mr{e}}}\bar{Z} L  +\kappa \dot{\xi} \frac{1}{\gamma c_{\mr{v}}}\cdot\frac{1}{\overline{T}_{+}} \cdot q_{\mr{e}}\bar{Z}
\left( \frac{\phi(\overline{T}_+)}{\overline{\rho}_{+}\overline{q}_+^2}
-\frac{\phi(\overline{T}_-)}{\overline{\rho}_{-}\overline{q}_-^2}  \right),
\end{eqnarray*}
which is the condition \eqref{eq:2.43} exactly.

Moreover, if $P_\mr{e}$ has the form $P_\mr{e}(y_2; \sigma, \kappa)=\sigma P_\sigma(y_2) + \kappa P_\kappa(y_2)$ for $\beta>2$, it holds that
\begin{eqnarray*}
&&\norm{\dot{p}_{+}}_{W_{\beta}^{1}(\dot{\Omega}_{+})}
+\norm{\dot{\theta}_{+}}_{W_{\beta}^{1}(\dot{\Omega}_{+})}\\
&\le& \dot{C}_{+}\left\{\sigma \norm{{\Theta}}_{\mcc^{2,\alpha}(\Gamma_{4})}
+\sigma\norm{ P_{\sigma}}_{\mcc^{2,\alpha}(\Gamma_{3})}
+ \kappa\norm{ P_{\kappa}}_{\mcc^{2,\alpha}(\Gamma_{3})}
+\norm{\dot{g}_{1}^{\#}}_{W_{\beta}^{1-1/\beta}(\dot{\Gamma}_{s})}\right\}\\
&\leq& \dot{C}_{+}(\sigma+\kappa),
\end{eqnarray*}
where the constant $\dot{C}_{+}$ depends on $\overline{U}_{+}, L, \dot{\xi}$ and $\alpha$.
\end{proof}

Once $\dot{\xi}$ is determined, then we can determine $\dot{U}_+$ in the domain $\dot{\Omega}_+$.
It follows from \eqref{eq:LEuler.8}-\eqref{eq:LEuler.10} that
\begin{align*}
 & \left(\overline{q}_{+}\dot{q}_{+}+\frac{1}{\overline{\rho}_{+}}\dot{p}_{+}+\overline{T}_{+}\dot{S}_{+}
 +{q_{\mr{e}}}\dot{Z}_{+}\right)\Big|_{(y_1,y_2)}
 =\left(\overline{q}_{+}\dot{q}_{+}+\frac{1}{\overline{\rho}_{+}}\dot{p}_{+}+\overline{T}_{+}\dot{S}_{+}
 +{q_{\mr{e}}}\dot{Z}_{+}\right)\Big|_{(\dot{\xi},y_2)},\\
 & \dot{S}_{+}(y_1,y_2)=\dot{S}_{+}(\dot{\xi},y_2)
 +\kappa\frac{1}{\overline{q}_+}\cdot\frac{\phi(\overline{T}_+)}{\overline{T}_+}{q_{\mr{e}}}\bar{Z}y_1,\\
 & \dot{Z}_{+}(y_1,y_2)=\dot{Z}_{+}(\dot{\xi},y_2)
 -\kappa\frac{\phi(\overline{T}_+)}{\overline{q}_+}\bar{Z}y_1.
\end{align*}
Then we obtain the following lemma:
\begin{lem}\label{lem: 3.4}
Let $\alpha \in (0,1)$ and $\beta > 2$.
If there exists $\dot{\xi} \in (0,L)$ such that \eqref{eq:2.43} holds, then there exists a solution $\left(\dot{U}_{-},\dot{U}_{+};\dot{\psi}_s', \dot{\xi} \right)$ to Problem \ref{problem: linearized}, which satisfies the estimate
\begin{equation}
\norm{\dot{U}_{-}}_{\mcc^{2,\alpha}(\dot{\Omega}_{-})}
+\norm{\dot{U}_{+}}_{(\dot{\Omega}_{+};\dot{\Gamma}_{s})}
+\norm{\dot{\psi}_s'}_{W_{\beta}^{1-1/\beta}(\dot{\Gamma}_{s})}
 \leq \dot{C}(\sigma+\kappa),
\end{equation}
where the constant $\dot{C}$ depending on $\overline{U}_{\pm}, L, \dot{\xi}$, $\alpha$ and $\beta$.
\end{lem}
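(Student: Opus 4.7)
The plan is to assemble the solution by combining the earlier lemmas in the natural order dictated by the type of each sub-system. First I would invoke Lemma \ref{lem:3.2} to obtain $\dot{U}_-\in C^{2,\alpha}(\Omega)$, which is entirely independent of the choice of $\dot{\xi}$; in particular this gives the estimate $\|\dot{U}_-\|_{C^{2,\alpha}(\Omega)}\leq \dot{C}_-(\sigma+\kappa)$, as well as the explicit expressions \eqref{eq:q-}--\eqref{eq:z-} which I will use later as source data on $\dot{\Gamma}_s$. Restricting $\dot{U}_-$ to $\dot{\Omega}_-$ yields the first summand in the desired estimate.

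Next, with $\dot{\xi}$ satisfying the solvability condition \eqref{eq:2.43}, I would invoke the previous lemma to solve the elliptic sub-problem \eqref{eq:LEuler.6}--\eqref{eq:LEuler.7} in $\dot{\Omega}_+$ under the boundary conditions \eqref{eq:LBoun.4}--\eqref{eq:LBoun.6} together with \eqref{eq:p+} on $\dot{\Gamma}_s$. Since the right-hand side $\dot{g}_1^\#$ on $\dot{\Gamma}_s$ is linear in $(\dot{p}_-,\dot{S}_-)$, which are already controlled by $C(\sigma+\kappa)$, the solvability is precisely provided by \eqref{eq:2.43}, and the estimate \eqref{eq:ptheta+} furnishes $\|\dot{p}_+\|_{W_\beta^1(\dot{\Omega}_+)}+\|\dot{\theta}_+\|_{W_\beta^1(\dot{\Omega}_+)}\leq \dot{C}_+(\sigma+\kappa)$.

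With $(\dot{p}_+,\dot{\theta}_+)$ in hand, the remaining components $(\dot{q}_+,\dot{S}_+,\dot{Z}_+)$ are obtained by integrating the transport equations \eqref{eq:LEuler.8}--\eqref{eq:LEuler.10} along the straight streamlines $y_2=\text{const}$, with Cauchy data on $\dot{\Gamma}_s$ prescribed by the R-H formulas \eqref{eq:q+}--\eqref{eq:s+} together with $\dot{Z}_+=\dot{Z}_-$. The explicit identities listed just before the statement give the $C(\overline{\dot{\Omega}_+})$ bound for $(\dot{q}_+,\dot{S}_+,\dot{Z}_+)$ directly from the corresponding bounds on the traces of $(\dot{p}_-,\dot{S}_-,\dot{Z}_-)$ and on $(\dot{p}_+,\dot{S}_+,\dot{Z}_+)|_{\dot{\Gamma}_s}$; the latter also supply the $W_\beta^{1-1/\beta}(\dot{\Gamma}_s)$ trace bound required by the norm $\|\cdot\|_{(\dot{\Omega}_+;\dot{\Gamma}_s)}$. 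Finally, I would read off $\dot{\psi}_s'$ algebraically from \eqref{eq:LRH.2}, namely $\dot{\psi}_s'=(\bar{q}_+\dot{\theta}_+-\bar{q}_-\dot{\theta}_-)/[\overline{p}]$ on $\dot{\Gamma}_s$, so that $\|\dot{\psi}_s'\|_{W_\beta^{1-1/\beta}(\dot{\Gamma}_s)}$ inherits the weighted trace estimate for $\dot{\theta}_\pm$.

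The main obstacle is the last point: the elliptic sub-problem carries mixed boundary data (Dirichlet for $\dot{\theta}_+$ on $\dot{\Gamma}_2^+\cup\dot{\Gamma}_4^+$, oblique on $\dot{\Gamma}_s$, Dirichlet for $\dot{p}_+$ on $\Gamma_3$), so corner singularities at the four corners of $\dot{\Omega}_+$ are unavoidable and force the weighted Sobolev space $W_\beta^1$ with $\beta>2$. The estimate \eqref{eq:ptheta+} already accounts for this, but one must verify that the trace of $\dot{g}_1^\#$ on $\dot{\Gamma}_s$ indeed lies in $W_\beta^{1-1/\beta}(\dot{\Gamma}_s)$ with norm controlled by $\sigma+\kappa$, which follows from the $C^{2,\alpha}$-regularity of $(\dot{p}_-,\dot{S}_-)$ and compatibility at the corners $(\dot{\xi},0)$ and $(\dot{\xi},1)$ encoded in \eqref{perturb4}. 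Summing the three pieces yields the claimed bound with $\dot{C}=\dot{C}(\overline{U}_\pm,L,\dot{\xi},\alpha,\beta)$.
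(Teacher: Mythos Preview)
Your proposal is correct and follows essentially the same route as the paper: first solve for $\dot{U}_-$ via Lemma~\ref{lem:3.2}, then use the solvability condition \eqref{eq:2.43} to obtain $(\dot{p}_+,\dot{\theta}_+)$ from the elliptic sub-problem with estimate \eqref{eq:ptheta+}, integrate the transport equations \eqref{eq:LEuler.8}--\eqref{eq:LEuler.10} from $\dot{\Gamma}_s$ with data \eqref{eq:q+}--\eqref{eq:s+} and $\dot{Z}_+=\dot{Z}_-$, and finally recover $\dot{\psi}_s'$ from \eqref{eq:LRH.2}. The paper in fact presents Lemma~\ref{lem: 3.4} without a separate proof, treating it as the summary of the preceding construction, so your write-up is if anything more detailed than the original.
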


\subsection{The Approximating Location of The Shock Front}
To estimate the approximating location of the shock front as $\sigma$ and $\kappa$ tend to zero, we consider four cases as Theorem \ref{thm: apprximating location of shock-front}. Here we give its proof:

\begin{proof} [Proof of Theorem \ref{thm: apprximating location of shock-front}]
	For statement (i), if $\kappa = A_1 \sigma^s$, denote $\bar{\xi}_*$ is the solution of $R_\sigma(\xi)= P_{\sigma *}$ since $P_{\sigma *} \in (\underline{R_\sigma}, \overline{R_\sigma})$. Denote the functional
	\begin{align*}
		I(\xi; \sigma, \kappa)= \sigma R_\sigma(\xi) + \kappa R_\kappa(\xi)
		-\sigma P_{\sigma *} - \kappa P_{\kappa *} .
	\end{align*}
	Thus clearly, $I(\bar{\xi}_*; \sigma, 0)=0.$ Taking derivative of $\xi$,
	\begin{align*}
		\frac{\partial I}{\partial \xi}(\bar{\xi}_*; \sigma ,\kappa)
		=\sigma R'_\sigma(\bar{\xi}_*) + \kappa R'_\kappa (\bar{\xi}_*)
		=-\sigma \dot{K}_1 \Theta(\bar{\xi}_*) + \kappa \dot{K}_2.
	\end{align*}
	Since $\Theta(\bar{\xi}_*) \neq 0$, $\displaystyle \frac{\partial I_\kappa}{\partial \xi}(\bar{\xi}_*; \sigma , \kappa) \neq 0$ if $\sigma_1^{s-1} < \displaystyle \frac{\dot{K}_1 |\Theta(\bar{\xi}_*)|}{A_1 \dot{K}_2}$.
	Then the implicit function theorem implies the existence of the solution $\dot{\xi}$ to the equation \eqref{R=P}.
	Moreover, from the expansion
	\begin{align*}
		0=I(\dot{\xi}; \sigma, \kappa)
		&= \kappa (R_\kappa(\bar{\xi}_*)- P_{\kappa *}) - \sigma \dot{K}_1 \Theta(\bar{\xi}_*) (\dot{\xi}-\bar{\xi}_*)
		+ \kappa \dot{K}_2  (\dot{\xi}-\bar{\xi}_*) + O(1) \sigma (\dot{\xi}-\bar{\xi}_*)^2,
	\end{align*}
	we can get the estimate
	\begin{align*}
		|\dot{\xi}-\bar{\xi}_*|	\leq \frac{\kappa |R_\kappa(\bar{\xi}_*)-P_{\kappa *}|}{ \sigma \dot{K}_1 |\Theta(\bar{\xi}_*)| - \kappa \dot{K}_2} \leq C_1 \sigma^{s-1},
	\end{align*}
	which complete the proof of (i).
	
	Similarly, if $\sigma=A_2 \kappa^s$, denote $\bar{\xi}_*$ is the solution of $R_\kappa(\xi)= P_{\kappa *}$ since $P_{\kappa *} \in (\underline{R_\kappa}, \overline{R_\kappa})$ and
	denote the functional
	\begin{align*}
		I(\xi; \sigma, \kappa)= \sigma R_\sigma(\xi) + \kappa R_\kappa(\xi)
		-\sigma P_{\sigma *} - \kappa P_{\kappa *} .
	\end{align*}
	Then $I (\bar{\xi}_*; 0, \kappa)=0$ and 
	$\displaystyle \frac{\partial I}{\partial \xi}(\bar{\xi}_*; \sigma ,\kappa) \neq 0$ if $\kappa_1^{s-1} < \displaystyle \frac{\dot{K}_2}{A_2 \dot{K}_1 \Theta(\bar{\xi}_*)}$.
	Using the same method in the proof of (i), we also have the existence of $\dot{\xi}$ and the estimate
	\begin{align*}
		|\dot{\xi}-\bar{\xi}_*|	\leq \frac{\sigma |R_\sigma(\bar{\xi}_*)- P_{\sigma *}|}{- \sigma \dot{K}_1 \Theta(\bar{\xi}_*) + \kappa \dot{K}_2} \leq C_2 \kappa^{s-1},
	\end{align*}
	for $0<\sigma < \sigma_1$. 
	Therefore the statement (ii) holds.
	
	If $\sigma = A \kappa $, we have
	\begin{align*}
		R(\xi; \sigma, \kappa)= \sigma R_{\sigma}(\xi) + \kappa R_{\kappa}(\xi)= \kappa (A R_\sigma + R_\kappa) = \kappa R_A.
	\end{align*}
	Then the solvability condition \eqref{R=P} becomes $R_A(\xi) = P_{A*}$ since $P_*= \kappa P_{A*}.$
	By the continuity of $R_A$, if $P_A$ satisfies $P_{A*} \in (\underline{R_A}, \overline{R_A})$,
	there exists $\bar{\xi}_* \in (0,L)$ such that $R_A(\bar{\xi}_*) = P_{A*}$. It implies (iii).
	
	For statement (iv), by the definition of $\mathcal{C}$, there exists $\bar{\xi}_* \in (0,L)$ such that 
	\begin{align} \label{R=P v2}
		R_{\sigma}(\bar{\xi}_*)= P_{\sigma*}, \quad
		R_{\kappa}(\bar{\xi}_*)= P_{\kappa*}.
	\end{align}
	It implies that \eqref{R=P} holds for $\dot{\xi}(\sigma, \kappa) \equiv \bar{\xi}_*$.
	
\end{proof}

Combining Theorem \ref{thm: apprximating location of shock-front} and Lemma \ref{lem: 3.4}, it is obvious that Theorem \ref{thm: linearized solution} holds.

\begin{figure}[htbp]
	\centering
	\includegraphics[height=7.0cm, width=10cm]{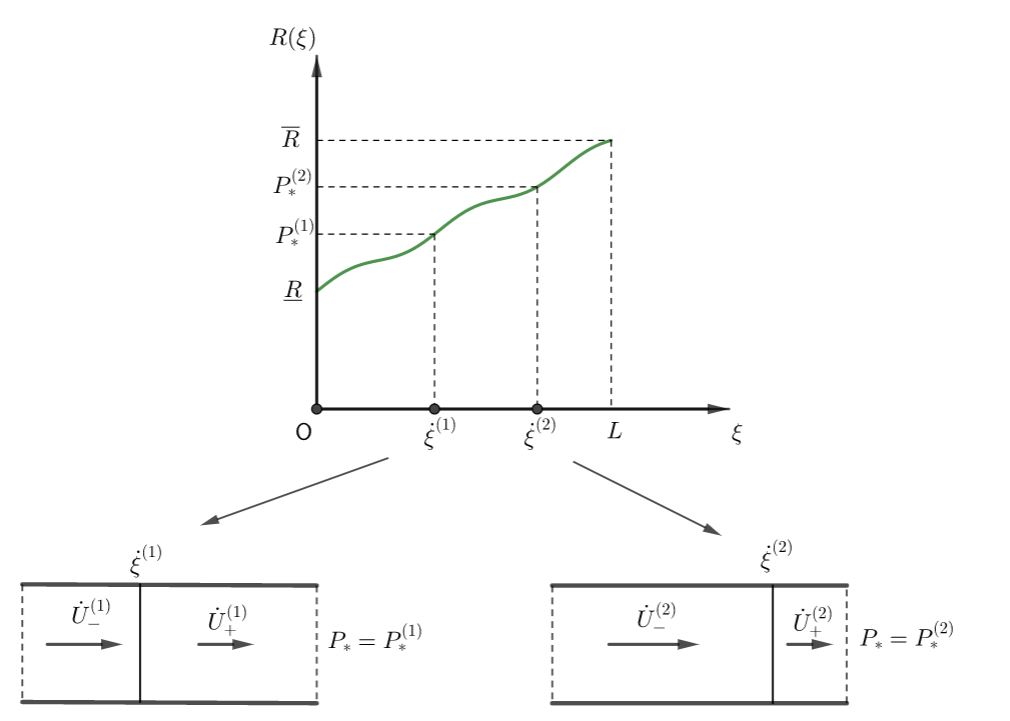}%
	\caption{the location of shock front in case (ii) of Theorem \ref{thm: apprximating location of shock-front}}
	\label{fig:approximating location}
\end{figure}

\begin{rem}
	For general cases, given an admissible pressure at the exit, there may exist multiple solutions for $\dot{\xi}$ satisfying the solvability condition \eqref{condition: admissible pressure}. However, in case (ii) of Theorem \ref{thm: apprximating location of shock-front}, $R(\xi)$ is a strictly increasing function of $\xi$ as $\sigma$ and $\kappa$ are sufficiently small. Therefore the approximating location of the shock front $\dot{\xi}$ can be determined uniquely for any admissible $P_\mr{e}$ satisfying $P_* \in (\underline R, \overline R)$ and then we can get a unique solution $\left(\dot{U}_{-},\dot{U}_{+};\dot{\psi}_s', \dot{\xi} \right)$ to Problem \ref{problem: linearized}.
	Moreover, if we change the pressure at the exit from $P^{(1)}$ to $P^{(2)}$ such that the corresponding quantity $P_*^{(1)}$ increases to $P_*^{(2)} < \bar{R}$, then the approximating location of the shock front gets closer to the exit, i.e. $\dot{\xi}^{(1)} < \dot{\xi}^{(2)} < L$ (See Figure \ref{fig:approximating location}).
	In case (i), $R(\xi)$ is a strictly monotone function if the nozzle is expanding or contracting and $\sigma$ and $\kappa$ are sufficiently small. Therefore we can get the similar results with the same argument in case (ii).
\end{rem}

\section{The Nonlinear Iteration}

In this section, we will construct the iteration mapping and use the background solution and the initial approximation location of the shock front as the initial approximation.

\subsection{The upstream flow}
Since the Euler system in the supersonic region is hyperbolic, according to the hyperbolic theory, we can get a supersonic flow in the whole nozzle without a shock as $\sigma$ and $\kappa$ are sufficiently small.

\begin{thm}
	Suppose \eqref{perturb4} holds. Then there exist constants $\delta_L$ and $C_L$ depending on $\bar{U}_-$ and $L$, such that for any $0 \leq \sigma + \kappa \leq \delta_L$, the problem
	\begin{align}
	\label{super1}&\text{The reacting Euler system $\eqref{LEuler1}-\eqref{LEuler5}$},& &\text{in $\Omega$}, \\
	\label{super2}&U_-(0,y_2)=\bar{U}_-, & &\text{on $\Gamma_1$}, \\
	\label{super3}&\theta_-(y_1,0)=0, & &\text{on $\Gamma_2$}, \\
	\label{super4}&\theta_-(y_1,1)=\sigma \Theta(y_1),& &\text{on $\Gamma_4$},
	\end{align}
	has a unique solution $U_- \in C^{2,\alpha}(\bar{\Omega})$ such that
	\begin{equation}\label{superes1}
	\|U_- - \bar{U}_- \|_{C^{2,\alpha}(\bar{\Omega})} \leq C_L (\sigma + \kappa).
	\end{equation}
	Moreover, let $\delta U:=U_- - \bar{U}_-$. Then there exists a constant $\tilde{C}_L$ depending on $\bar{U}_-$ and $L$ such that
	\begin{equation}\label{superes2}
	\|\delta U_- - \dot{U}_-\|_{C^{1,\alpha}(\bar{\Omega})} \leq \tilde{C}_L (\sigma+\kappa)^2.
	\end{equation}
\end{thm}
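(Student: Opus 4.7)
The plan is to regard the system \eqref{LEuler1}--\eqref{LEuler5}, rewritten in the non-divergence form \eqref{NDF1}--\eqref{NDF5}, as a quasilinear initial-boundary value problem on the rectangle $\Omega$, with $y_1$ playing the role of the time variable and $y_2$ the space variable. At the background constant state $\bar{U}_-$, the two eigenvalues of $A(\bar{U}_-)$ reduce to $\pm\frac{1}{\bar{\rho}_-\bar{q}_-}\sqrt{\bar{M}_-^2-1}$ and have opposite signs (since $\bar{M}_- > 1$ and $\bar{\theta}_- = 0$), so the two characteristic families of the $(p,\theta)$-subsystem enter $\Omega$ through $\Gamma_1$ and reflect cleanly off the walls $\Gamma_2$ and $\Gamma_4$; the slip condition provides exactly one scalar boundary datum on each wall, matching the number of incoming characteristics. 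The remaining equations \eqref{NDF3'}, \eqref{NDF4}, \eqref{NDF5} are first-order transport relations in the $y_1$-direction that integrate along streamlines from the data on $\Gamma_1$. Hence the linearization of the problem is well-posed by the same arguments used for Lemma~\ref{lem:3.2}.

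The first step, yielding the existence, uniqueness, and the bound \eqref{superes1}, is a Picard iteration: freeze the principal part at a previous iterate $U^{(n)}$, solve the resulting linear hyperbolic boundary value problem with the prescribed inhomogeneous data $(\bar{U}_-, 0, \sigma\Theta)$ and $O(\kappa)$ source terms, and denote the solution $U^{(n+1)}$. Using the linear $C^{2,\alpha}$-estimate for the frozen-coefficient problem, one checks inductively that the iterates remain in a small $C^{2,\alpha}$-ball around $\bar{U}_-$ of radius $\lesssim\sigma+\kappa$; this in particular keeps the flow supersonic and the system hyperbolic throughout $\Omega$. Since the nonlinearity is quadratic near $\bar{U}_-$, the iteration contracts in $C^{1,\alpha}$ provided $\sigma+\kappa\le\delta_L$ for a suitable $\delta_L$, producing a unique fixed point that satisfies \eqref{superes1}; uniqueness follows from the same contraction estimate.

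The second step is the refined estimate \eqref{superes2}. Set $W:=\delta U_- -\dot{U}_-$, where $\dot{U}_-$ is the linearized solution from Lemma~\ref{lem:3.2}. Plugging $U_- = \bar{U}_- + \dot{U}_- + W$ into \eqref{NDF1}--\eqref{NDF5} and subtracting the linearized system \eqref{eq:LEuler.1}--\eqref{eq:LEuler.5}, $W$ satisfies a linear hyperbolic system of the same structure with homogeneous entrance and slip boundary data; the right-hand side is the Taylor remainder, collecting the quadratic part of the Euler fluxes at $\bar{U}_-$ together with the first-order expansion of $\kappa f_1, \kappa f_4, \kappa f_5$ around $\bar{U}_-$. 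By \eqref{superes1} and the $C^{2,\alpha}$-bound on $\dot{U}_-$ from Lemma~\ref{lem:3.2}, this source is of order $(\sigma+\kappa)^2$ in $C^{0,\alpha}(\overline{\Omega})$, so the linear $C^{1,\alpha}$-estimate applied to $W$ yields $\|W\|_{C^{1,\alpha}(\overline{\Omega})}\le\tilde{C}_L(\sigma+\kappa)^2$.

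The main obstacle is the corner compatibility required for the $C^{2,\alpha}$-regularity at $(0,0)$ and $(0,1)$: since the entrance datum $\bar{U}_-$ and the wall data $0$, $\sigma\Theta$ must match up to second tangential derivatives for a $C^{2,\alpha}$-solution, one needs $\sigma\Theta(0)=\sigma\Theta'(0)=\sigma\Theta''(0)=0$, which is precisely hypothesis \eqref{perturb4}, and this compatibility has to be propagated through every Picard iterate. A secondary technical point, already touched on above, is that the iteration must preserve supersonicity throughout $\Omega$ so that the hyperbolic structure and the counting of boundary conditions remain valid; both issues are handled by taking $\sigma+\kappa$ sufficiently small, after which the conclusion follows from standard quasilinear hyperbolic theory.
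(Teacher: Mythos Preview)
Your proposal is correct and follows essentially the same approach as the paper: the paper cites the quasilinear hyperbolic theory of Li--Yu for existence and \eqref{superes1}, then derives \eqref{superes2} by writing the system as $A(U)\partial_{y_1}U+B\partial_{y_2}U=C(U)$, subtracting the linearized equation for $\dot U_-$, and bounding the resulting source $(A(\bar U_-)-A(U_-))\partial_{y_1}\delta U_-+(C(U_-)-C(\bar U_-))$ by $C(\sigma+\kappa)^2$ in $C^{1,\alpha}$. Your discussion of the Picard iteration, the characteristic/boundary counting, and the corner compatibility via \eqref{perturb4} supplies details the paper leaves implicit in its citation, but the underlying argument is the same.
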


\begin{proof}
	The existence of the unique solution $U_- \in C^{2,\alpha}(\bar{\Omega})$ can be obtained by applying the theory of hyperbolic system in \cite{LiYu1985}. To estimate $\|\delta U_- - \dot{U}_-\|_{C^{1,\alpha}(\bar{\Omega})}$, rewrite Euler system \eqref{LEuler1}-\eqref{LEuler5} as
	\begin{equation}
	A(U)\partial_{y_1} U + B \partial_{y_2} U = C(U),
	\end{equation}
	where
	\begin{equation*}
	A(U)=\left(
	\begin{matrix}
	-\displaystyle \frac{\cos \theta}{\rho q}\frac{1-M^2}{\rho q^2} & -\displaystyle \frac{\sin \theta}{\rho q} & 0 & 0 & 0 \\
	-\displaystyle \frac{\sin \theta}{\rho q} & q \cos \theta & 0 & 0 & 0 \\
	1 & 0 & \rho q & 0 & 0 \\
	0 & 0 & 0 & 1 & 0 \\
	0 & 0 & 0 & 0 & 1
	\end{matrix}
	\right)
	,
	\end{equation*}
	\begin{equation*}
	B=
	\left(
	\begin{matrix}
	0 & 1 & 0 & 0 & 0 \\
	1 & 0 & 0 & 0 & 0 \\
	0 & 0 & 0 & 0 & 0 \\
	0 & 0 & 0 & 0 & 0 \\
	0 & 0 & 0 & 0 & 0
	\end{matrix}
	\right)
	, \quad
	C=
	\left(
	\begin{matrix}
	\kappa f_1(U) \\ 0 \\ 0 \\ \kappa f_4(U) \\ -\kappa f_5(U)
	\end{matrix}
	\right).
	\end{equation*}
	
	Since $U_-$ satisfies \eqref{super1}-\eqref{super4} and $\dot{U}_-$ satisfies
	\begin{equation*}
	A(\bar{U}_-)\partial_{y_1} U + B \partial_{y_2} U = C(\bar{U}_-),
	\end{equation*}
	with the same boundary conditions. Hence $\delta U_- - \dot{U}_-$ satisfies the following problem:
	\begin{align*}
	&A(\bar{U}_-)\partial_{y_1} (\delta U_- - \dot{U}_-) + B \partial_{y_2} (\delta U_- - \dot{U}_-) = \tilde{C}(\delta U_-),& &\text{in $\Omega$}, \\
	&\delta U_- - \dot{U}_-=0,& &\text{on $\Gamma_1$}, \\
	&\theta_- - \dot{\theta}_-=0,& &\text{on $\Gamma_2$}, \\
	&\theta_- - \dot{\theta}_-=0,& &\text{on $\Gamma_4$},
	\end{align*}
	where
	\begin{equation*}
	\tilde{C}(\delta U_-) = (A(\bar{U}_-)-A(U_-))\partial_{y_1} \delta U_- + (C(U_-)-C(\bar{U}_-)).
	\end{equation*}
	Note that $\|C(U_-)-C(\bar{U}_-)\|_{C^{1,\alpha}(\bar{\Omega})} \leq
	C \kappa \| \delta U_- \|_{C^{1,\alpha}(\bar{\Omega})}$,
	we have
	\begin{align*}
	\|\delta U_- - \dot{U}_-\|_{C^{1,\alpha}(\bar{\Omega})} & \leq
	C \|\tilde{C}(\delta U_-)\|_{C^{1,\alpha}(\bar{\Omega})} \\
	& \leq C \| \partial_{y_1} \delta U_- \| _{C^{1,\alpha}(\bar{\Omega})}
	\| \delta U_- \|_{C^{1,\alpha}(\bar{\Omega})}
	+ C \kappa \|\delta U_-\|_{C^{1,\alpha}(\bar{\Omega})} \\
	& \leq C (\sigma + \kappa)^2,
	\end{align*}
	which proves \eqref{superes2}.
\end{proof}

\subsection{The shock front and the downstream flow}
We describe a shock-front $\Gamma_s$ whose location is close to $\dot{\Gamma}_s$ by
\begin{equation*}
\Gamma_s := \{ (y_1,y_2): y_1=\psi_s(y_2) := \dot{\xi} + \delta \psi_s(y_2), 0<y_2<1 \},
\end{equation*}
while the downstream region is
\begin{equation*}
\Omega_+ := \{ (y_1,y_2): \psi_s(y_2) < y_1 < L , 0<y_2<1 \}.
\end{equation*}
Our goal is to find a solution $(U_+; \psi)$ which is close to $(\bar{U}, \bar{\psi})$ and solves the free boundary problem
\begin{align*}
&\text{The reacting Euler system $\eqref{LEuler1}-\eqref{LEuler5}$},& & \qquad \text{in $\Omega$}, \\
&\text{Rankine-Hugoniot conditions $\eqref{LRH1}-\eqref{LRH5}$},& & \qquad \text{on $\Gamma_S$}, \\
&\theta_+(y_1,0)=0,& & \qquad \text{on $\Gamma_2^+$}, \\
&p_+(L,y_2)= \bar{p}_+ + P_\mr{e} (Y(L,y_2);\sigma,\kappa),& & \qquad \text{on $\Gamma_3$}, \\
&\theta_+(y_1,1)=\sigma \Theta(y_1),& & \qquad \text{on $\Gamma_4^+$},
\end{align*}
where $U_-$ in R-H conditions is given by the supersonic flow solved in the previous section.

Since the subsonic region is undetermined because of the free boundary $\Gamma_S$, to fix the region we need to introduce a transformation first which is defined as $(z_1,z_2)=\Phi(y_1,y_2,\psi)$:
\begin{align*}
&z_1=L+\frac{L-\dot{\xi}}{L-\psi(y_2)}(y_1-L), \\
&z_2=y_2.
\end{align*}
The transformation fix $\Gamma_S$ to the initial approximation location of the shock front.
Hence under the transformation, $\Omega_+, \Gamma_s, \Gamma_2^+, \Gamma_3 , \Gamma_4^+$ become, respectively,
\begin{align*}
& \tilde{\Omega} = \{(z_1,z_2): \dot{\xi} < z_1 < L , 0<z_2<1 \}, \\
& \tilde{\Gamma}_s = \{(z_1,z_2): z_1 = \dot{\xi} , 0<z_2<1 \}, \\
& \tilde{\Gamma}_2 = \{(z_1,z_2): \dot{\xi} < z_1 < L , z_2=0 \}, \\
& \tilde{\Gamma}_3 = \{(z_1,z_2): z_1 = L , 0<z_2<1\}, \\
& \tilde{\Gamma}_4 = \{(z_1,z_2): \dot{\xi} < z_1 < L , z_2 = 1 \}.
\end{align*}
Denote $\tilde{U}(z_1,z_2) = U_+(\Phi^{-1}(z_1,z_2; \psi))$. Then the free boundary value problem becomes a fixed boundary value problem:
\begin{align}
&A(\tilde{U}) \partial_{z_1} \tilde{U} + B \partial_{z_2} \tilde{U}= H(\tilde{U};\psi, \dot{\xi}),
& &\text{in $\tilde{\Omega}$}, \\
& \tilde{\theta} = 0, \quad& & \text{on $\tilde{\Gamma}_2$}, \\
& \tilde{p} = \bar{p}_+ + P_\mr{e} (Y(L,z_2);\sigma,\kappa),& & \text{on $\tilde{\Gamma}_3$}, \\
& \tilde{\theta} = \sigma \Theta (\Phi^{-1}(z_1,z_2;\psi)),& & \text{on $\tilde{\Gamma}_4$}, \\
& G_j(\tilde{U}, U_-^{(\psi',\dot{\xi})})=0, \quad j=1,2,3,4,& & \text{on $\tilde{\Gamma}_s$}, \\
& G_5(\tilde{U}, U_-^{(\psi',\dot{\xi})};\psi')=0,& &\text{on $\tilde{\Gamma}_s$},
\end{align}
where
\begin{align*}
	H(\tilde{U};\psi, \dot{\xi}):= C(\tilde{U})
	+ \frac{\dot{\xi} - \psi (z_2)}{L- \psi(z_2)} A(\tilde{U}) \partial_{z_1}\tilde{U}
	+\frac{L-z_1}{L-\psi(z_2)} B \psi ' (z_2) \partial_{z_1} \tilde{U}.
\end{align*}

Next we shall construct a nonlinear iteration scheme to prove the existence of the solution of this problem.
In the remaining part, we will drop " $\tilde{}$ " in the following argument if there is no confusion.

\subsection{The iteration scheme in the subsonic region}
For a given approximating subsonic state $U=\bar{U}_+ + \delta U$ and a given approximating location of the shock front $\psi(z_2)=\dot{\xi} + \delta \xi - \int_{z_2}^1 \delta \psi'(s)ds$,
where $\delta U$ and $\delta \psi'$ are close to $\dot{U}_+$ and $\dot{\psi}'$, respectively, we construct an iteration mapping
\begin{equation*}
	J: (\delta U;\delta \xi , \delta \psi') \mapsto (\delta U ^*;\delta \xi ^* , (\delta \psi^*)'),
\end{equation*}
according to the linearized problem:

In the subsonic region $\tilde{\Omega}$,
$\delta U^* = (\delta p^*, \delta \theta^*, \delta q^*, \delta S^*, \delta Z^*)$ and
$\delta \xi^*$ satisfy the linearized Euler equations:

\begin{align}
\label{it1} -\frac{1}{\bar{\rho}_+ \bar{q}_+} \frac{1-\bar{M}_+^2}{\bar{\rho}_+\bar{q}_+^2} \partial_{z_1}(\delta p^*)
+\partial_{z_2}(\delta \theta^*) &= \tilde{f}_1(\delta U; \delta \psi', \delta \xi^*), \\
\label{it2} \bar{q}_+ \partial_{z_1}(\delta \theta^*) + \partial_{z_2}(\delta p^*)
&= \tilde{f}_2 (\delta U; \delta \psi', \delta \xi^*),\\
\label{it3} \partial_{z_1} (\bar{q}_+ \delta q^* + \frac{1}{\bar{\rho}_+} \delta p^* + \bar{T}_+ \delta S^* +q_{\mr{e}}\delta Z^*)
&= \partial_{z_1} \tilde{f}_3 (\delta U; \delta \psi', \delta \xi^*),\\
\label{it4} \partial_{z_1} (\delta S^*) &= \tilde{f}_4 (\delta U; \delta \psi', \delta \xi^*),\\
\label{it5} \partial_{z_1} (\delta Z^*) &= \tilde{f}_5 (\delta U; \delta \psi', \delta \xi^*),
\end{align}
where
\begin{align*}
\begin{split}
\tilde{f}_1(\delta U; \delta \psi', \delta \xi^*) &= \kappa \bar{f}_1^+
+ \frac{\sin \theta}{\rho q}\partial_{z_1} \theta +
\left( \frac{\cos \theta}{\rho q}\frac{1-M_+^2}{\rho q^2}
--\frac{1}{\bar{\rho}_+ \bar{q}_+} \frac{1-\bar{M}_+^2}{\bar{\rho}_+\bar{q}_+^2} \right)
\partial_{z_1} p \\
&\quad + \kappa (f_1^+(\delta U, \delta \psi') - \bar{f}_1^+) + \frac{L-z_1}{L-\psi(z_2)} \psi'(z_2) \partial_{z_1} \theta \\
&\quad -\frac{\dot{\xi} - \psi(z_2)}{L-\psi(z_2)}
\left( \frac{\cos \theta}{\rho q}\frac{1-M_+^2}{\rho q^2} \partial_{z_1} p
+ \frac{\sin \theta}{\rho q} \partial_{z_1} \theta \right),
\end{split} \\
\begin{split}
\tilde{f}_2(\delta U; \delta \psi', \delta \xi^*) &=
-\frac{\sin \theta}{\rho q} \partial_{z_1}p + (q\cos\theta-\bar{q}_+) \partial_{z_1} \theta \\
&\quad +\frac{\dot{\xi} - \psi(z_2)}{L-\psi(z_2)}
\left ( q \cos \theta \partial_{z_1} \theta - \frac{\sin \theta}{\rho q} \partial_{z_1} p \right) \\
&\quad +  \frac{L-z_1}{L-\psi(z_2)} \psi'(z_2) \partial_{z_1}p,
\end{split} \\
\tilde{f}_3(\delta U; \delta \psi', \delta \xi^*) &=
(\bar{q}_+ \delta q + \frac{1}{\bar{\rho}_+} \delta p + \bar{T}_+ \delta S +q_{\mr{e}}\delta Z)\
-(\frac{1}{2}q^2 +i+q_{\mr{e}} Z), \\
\tilde{f}_4(\delta U; \delta \psi', \delta \xi^*) &= \kappa \bar{f}_4^+
+ \kappa (f_4^+(\delta U, \delta \psi') - \bar{f}_4^+)
+ \frac{\dot{\xi} - \psi(z_2)}{L-\psi(z_2)} \partial_{z_1} (\delta S), \\
\tilde{f}_5(\delta U; \delta \psi', \delta \xi^*) &= -\kappa \bar{f}_5^+
- \kappa (f_5^+(\delta U, \delta \psi') - \bar{f}_5^+)
+ \frac{\dot{\xi} - \psi(z_2)}{L-\psi(z_2)} \partial_{z_1} (\delta Z).
\end{align*}

On the boundaries, $\delta U^*$ and $\delta \xi^*$ satisfy
\begin{align}
\label{itb1} &\delta \theta^* (z_1,0) = 0,  & \text{on $\tilde{\Gamma}_2$}, \\
\label{itb2} &\delta p^*(L,z_2) = \delta P(z_2; \delta U),  & \text{on $\tilde{\Gamma}_3$}, \\
\label{itb3} &\delta \theta^* (z_1,1) = \delta \Theta_4(z_1; \delta \xi^*), & \text{on $\tilde{\Gamma}_4$},
\end{align}
where
\begin{align*}
&\delta P(z_2; \delta U):= P_\mr{e}(Y(L, z_2; \delta U);\sigma,\kappa),\\
&Y(L, z_2; \delta U)=\int_0^{z_2} \frac{1}{(\rho q \cos \theta)(L,s)}ds, \\
&\delta \Theta_4(z_1; \delta \xi^*) = \sigma \Theta\left(z_1+ \frac{\delta \xi^*}{L-\dot{\xi}}(L-z_1)\right).
\end{align*}

On the shock-front $\tilde{\Gamma}_s$, $\delta U^*$, $\delta \xi^*$ and $(\delta \psi^*)'$ satisfy the linearized Rankine-Hugoniot conditions:
\begin{align}
\label{its1} &\beta_j^+ \cdot \delta U^* = g_j(\delta U, \delta U_-; \delta \psi', \delta \xi^*), \quad j=1,2,3,4, \\
\label{its2} &\beta_5^+ \cdot \delta U^* - [\bar{p}](\delta \psi^*)' = g_5 (\delta U, \delta U_-; \delta \psi', \delta \xi^*),
\end{align}
where
\begin{align*}
&g_j(\delta U, \delta U_-; \delta \psi', \delta \xi^*) =
\beta_j^+ \cdot \delta U - G_j(U, U_-^{(\psi',\xi_*)}), \quad j=1,2,3,4, \\
& g_5 (\delta U, \delta U_-; \delta \psi', \delta \xi^*) =
\beta_5^+ \cdot \delta U - [\bar{p}]\delta \psi' - G_5(U, U_-^{(\psi',\xi_*)};\psi').
\end{align*}
Similar as the previous section, we can rewrite the R-H conditions into
\begin{equation}
\left(
\begin{matrix}
\delta p^* \\ \delta q^* \\ \delta S^* \\ \delta Z^*
\end{matrix}
\right)
=
\left(
\begin{matrix}
g_1^\sharp \\ g_2^\sharp \\ g_3^\sharp \\ g_4^\sharp
\end{matrix}
\right)
:=
B_s^{-1}
\left(
\begin{matrix}
g_1 \\ g_2 \\ g_3 \\ g_4
\end{matrix}
\right),
\quad \text{on $\Gamma_s$},
\end{equation}
where
$g_j^\sharp = g_j^\sharp (\delta U, \delta U_-; \delta \psi', \delta \xi^*), j=1,2,3,4$.

Define
\begin{equation*}
\Sigma(\varepsilon):=
\{ (\delta U, \delta \psi'): \| \delta U \|_{(\Omega, \Gamma_s)}
+ \| \delta \psi' \|_{W_{\beta}^{1-\frac{1}{\beta}}(\Gamma_s)} \leq \varepsilon \},
\end{equation*}
and
\begin{equation*}
\mathcal{S}(\varepsilon; \dot{U}_+, \dot{\psi}'):=
\{ (\delta U, \delta \psi'):
(\delta U - \dot{U}_+ , \delta \psi'-\dot{\psi}') \in \Sigma({\varepsilon})
\}.
\end{equation*}

Next we will prove the iteration mapping
\begin{align*}
	J' : \mathcal{S}(\displaystyle \frac{1}{2} (\sigma + \kappa)^{3/2}; \dot{U}_+, \dot{\psi}')
	&\to \mathcal{S}(\displaystyle \frac{1}{2} (\sigma + \kappa)^{3/2}; \dot{U}_+, \dot{\psi}'), \\
	(\delta U, \delta \psi') &\mapsto (\delta U ^*, (\delta \psi^*)'),
\end{align*}
is well-defined for sufficiently small $\sigma$ and $\kappa$.
The proof is divided into several steps:
\begin{enumerate}
	\item Determine $\delta \xi^*$ and solve for $(\delta p^*, \delta \theta^*)$ by \eqref{it1},\eqref{it2},\eqref{itb1}-\eqref{itb3} and \eqref{its1};
	\item Solve for $\delta S^*$ and $\delta Z^*$ by \eqref{it4}, \eqref{it5} and \eqref{its1};
	\item Solve for $\delta q^*$ by \eqref{it3} and \eqref{its1};
	\item Update $(\delta \psi)'$ by \eqref{its2}.
\end{enumerate}
And then we will prove that the iteration mapping $J'$ is contractive so that we can get a fixed point via the contraction mapping principle. The fixed point is the solution to Problem \ref{problem: Lagrange}.

\subsubsection{Determine $\delta \xi^*$ and solve for $(\delta p^*, \delta \theta^*)$}
Consider the equations
\begin{align}
\label{pt1} &-\frac{1}{\bar{\rho}_+ \bar{q}_+} \frac{1-\bar{M}_+^2}{\bar{\rho}_+\bar{q}_+^2} \partial_{z_1}(\delta p^*)
+\partial_{z_2}(\delta \theta^*) = \tilde{f}_1(\delta U; \delta \psi', \delta \xi^*),
& &\text{in $\tilde{\Omega}$},\\
\label{pt2}&\bar{q}_+ \partial_{z_1}(\delta \theta^*) + \partial_{z_2}(\delta p^*)
= \tilde{f}_2 (\delta U; \delta \psi', \delta \xi^*),
& & \text{in $\tilde{\Omega}$},\\
\label{pt3}&\delta \theta^* (z_1,0) = 0, & & \text{on $\tilde{\Gamma}_2$}, \\
\label{pt4}&\delta p^*(L,z_2) = \delta P(z_2; \delta U), & & \text{on $\tilde{\Gamma}_3$}, \\
\label{pt5}&\delta \theta^* (z_1,1) = \delta \Theta_4(z_1; \delta \xi^*),& & \text{on $\tilde{\Gamma}_4$},\\
\label{pt6}&\delta p^*(\bar{\xi_*},z_2) = g_1^{\sharp} (\delta U, \delta U_-; \delta \psi', \delta \xi^*),
& &\text{on $\tilde{\Gamma}_s$}.
\end{align}
Define the functional:
\begin{equation}
\begin{split}
I(\delta \xi^*, \delta U, \delta \psi', \delta U_-, c)
= &-\int_{\tilde{\Omega}} \tilde{f}_1(\delta U; \delta \psi',\delta \xi^*) dz_1 dz_2
+\int_{\dot{\xi}}^L \delta \theta_4 (z_1 ; \delta \xi^*) dz_1\\
&-\frac{1}{\bar{\rho}_+ \bar{q}_+} \frac{1-\bar{M}_+^2}{\bar{\rho}_+\bar{q}_+^2}
\int_0^1 \delta P(z_2;\delta U) dz_2 \\
& +\frac{1}{\bar{\rho}_+ \bar{q}_+} \frac{1-\bar{M}_+^2}{\bar{\rho}_+\bar{q}_+^2}
\int_0^1 g_1^\sharp(\delta U, \delta U_- ; \delta \psi', \delta \xi^*) dz_2
+c.
\end{split}
\end{equation}
According to the lemma of solvability, the equations has a solution if and only if
\begin{equation}\label{sol}
I(\delta \xi^*, \delta U, \delta \psi', \delta U_-, 0)=0.
\end{equation}
By the calculation of the initial approximating location of the shock front, we already have
\begin{equation}
I(0;0,0;\dot{U}_- , \dot{c})=0,
\end{equation}
where
\begin{equation*}
\dot{c}=\frac{1}{\bar{\rho}_+ \bar{q}_+} \frac{1-\bar{M}_+^2}{\bar{\rho}_+\bar{q}_+^2}
\int_{\tilde{\Gamma}_s} (-g_1^{\sharp} (0,\dot{U}_1;0,0) + \dot{g}_1^{\sharp} (0,\dot{U}_1;0,0)) dz_2.
\end{equation*}

Next, we will expand $I$ near $(0;0,0;\dot{U}_-, \dot{c})$ and solve $\delta \xi^*$ first by the implicit theorem.

\begin{thm} \label{sol thm}
	There exists small constants $\sigma_1$ and $\kappa_1$ depending on $\dot{\xi}$ such that
	for any $0<\sigma \leq \sigma_1$ and $0< \kappa \leq \kappa_1$ satisfy \eqref{admissble condition},
	if $(\delta U, \delta \psi') \in \mathcal{S}(\displaystyle \frac{1}{2} (\sigma + \kappa)^{3/2}; \dot{U}_+, \dot{\psi}')$,
	there exists a unique solution $\delta \xi^*$ to the equation \eqref{sol} with the estimate
	$|\delta \xi^*| \leq C_* (\sigma+\kappa)$, where the constant $C_*$ depends on $\dot{\xi}$
	and $\beta_0$.
\end{thm}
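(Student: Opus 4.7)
The plan is to treat the scalar equation $\mcf(\delta \xi^*) \defs I(\delta \xi^*, \delta U, \delta \psi', \delta U_-, 0) = 0$ as an implicit function problem in the single unknown $\delta \xi^*$, with the other arguments regarded as parameters drawn from the ball $\mcs\bigl(\tfrac12(\sigma+\kappa)^{3/2};\dot{U}_+,\dot{\psi}'\bigr)$ together with the upstream constraint from \eqref{superes2}. Two quantitative bounds are required: a lower bound $|\mcf'(0)| \geq c(\sigma+\kappa)$ that makes the linearization of $\mcf$ invertible at the reference, and an upper bound $|\mcf(0)| \leq C(\sigma+\kappa)^2$ that controls the residual. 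Their ratio delivers the estimate $|\delta \xi^*|\leq C_*(\sigma+\kappa)$, while existence and uniqueness follow by showing that the map $\delta\xi^*\mapsto -\mcf(0)/\mcf'(0)+(\text{higher-order correction})$ is a contraction on the interval of that size.

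For the lower bound on $\mcf'(0)$, I would differentiate the four components of $I$ in $\delta\xi^*$ at the reference parameter $(\delta U,\delta\psi',\delta U_-)=(0,0,\dot{U}_-)$ and retrace the computation leading to \eqref{eq:2.43}. The dependence of $I$ on $\delta\xi^*$ enters through the rescaled upper boundary $\delta\Theta_4(z_1;\delta\xi^*)$, through the shifted upstream evaluation $\xi_*=\dot{\xi}+\delta\xi^*$ inside $g_1^\sharp$, and through the $(\dot{\xi}-\psi)$ factor in $\tilde{f}_1$. Mirroring the Section 3 derivation, these contributions collapse to
\[
\mcf'(0)=\kappa\dot{K}_2-\sigma\dot{K}_1\Theta(\dot{\xi})+R,
\]
with a remainder $R$ controlled by $\|(\delta U-\dot{U}_+,\delta\psi'-\dot{\psi}')\|+\|\delta U_--\dot{U}_-\|=O((\sigma+\kappa)^{3/2})$. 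The admissibility hypothesis \eqref{admissble condition} then guarantees $|\mcf'(0)|\geq\tfrac12\beta_0(\sigma+\kappa)$ for all $\sigma+\kappa$ small enough to absorb $R$.

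For the upper bound on $|\mcf(0)|$, I would exploit the baseline identity $I(0;0,0;\dot{U}_-,\dot{c})=0$ to write $\mcf(0)=I(0,\delta U,\delta\psi',\delta U_-,0)-I(0;0,0;\dot{U}_-,\dot{c})$ and verify that every contribution is genuinely quadratic. The nonlinear pieces $G_j-\beta_j^+\cdot\delta U-\beta_j^-\cdot\delta U_-$ concealed in $g_1^\sharp$ are $O((|\delta U|+|\delta U_-|)^2)=O((\sigma+\kappa)^2)$; the non-constant terms of $\tilde{f}_1$ each carry a small prefactor ($\kappa$, $\sin\theta$, or $(\dot{\xi}-\psi)$) multiplied against a quantity of size $O(\sigma+\kappa)$; the upstream discrepancy $\delta U_--\dot{U}_-$ is $O((\sigma+\kappa)^2)$ by \eqref{superes2}; and $\dot{c}$ is itself $O((\sigma+\kappa)^2)$ since it measures the gap between $g_1^\sharp$ and $\dot{g}_1^\sharp$ at a state of size $\sigma+\kappa$. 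Summing yields $|\mcf(0)|\leq C(\sigma+\kappa)^2$ uniformly over the parameter ball.

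Combining the two estimates and expanding $\mcf(\delta\xi^*)=\mcf(0)+\mcf'(0)\delta\xi^*+N(\delta\xi^*)$ with $N(\delta\xi^*)=O((\sigma+\kappa)|\delta\xi^*|^2)$, the equation $\mcf(\delta\xi^*)=0$ rearranges to $\delta\xi^*=-[\mcf(0)+N(\delta\xi^*)]/\mcf'(0)$; this map sends $\{|\delta\xi^*|\leq C_*(\sigma+\kappa)\}$ with $C_*\defs 4C/\beta_0$ into itself and is contractive with Lipschitz constant $O(\sigma+\kappa)/\beta_0$, so Banach's fixed-point theorem produces the unique $\delta\xi^*$ satisfying the asserted bound. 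The principal obstacle will be the careful identification of $\mcf'(0)$: one must track exactly how $\delta\xi^*$ propagates through $\delta\Theta_4$, $\tilde{f}_1$, and $g_1^\sharp$ so that the cancellations from Section 3 regenerate the precise combination $\kappa\dot{K}_2-\sigma\dot{K}_1\Theta(\dot{\xi})$ demanded by \eqref{admissble condition}, with all unmatched contributions safely absorbed into $R$.
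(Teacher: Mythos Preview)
Your proposal is correct and matches the paper's proof essentially step for step: the paper performs the same term-by-term expansion of $I$ (splitting $\int\tilde f_1$ into pieces $I_1,\dots,I_5$, Taylor-expanding $\delta\Theta_4$ and $g_1^\sharp$, etc.) to arrive at exactly your decomposition $\mcf(\delta\xi^*)=\mcf(0)+\mcf'(0)\,\delta\xi^*+N(\delta\xi^*)$ with $\mcf'(0)=\kappa\dot K_2-\sigma\dot K_1\Theta(\dot\xi)+O((\sigma+\kappa)^{3/2})$, $\mcf(0)=O((\sigma+\kappa)^2)$, and $N=O((\sigma+\kappa)|\delta\xi^*|^2)$. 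The only cosmetic difference is that the paper invokes the implicit function theorem at the end, whereas you write out the equivalent contraction-mapping argument explicitly; both yield the same existence, uniqueness, and bound $|\delta\xi^*|\leq C\beta_0^{-1}(\sigma+\kappa)$.
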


\begin{proof}
	To expand $I$, we shall estimate each term first. Firstly, we separate
	$\int_{\tilde{\Omega}} \tilde{f}_1 dz_1 dz_2$ into six parts:
	\begin{equation*}
	\int_{\tilde{\Omega}} \tilde{f}_1 (\delta U;\delta \psi', \delta \xi^*) dz_1 dz_2
	=\int_{\tilde{\Omega}} \kappa \bar{f}_1 dz_1 dz_2
	+\sum_{i=1}^5 I_i,
	\end{equation*}
	where
	\begin{align*}
	&I_1 = \int_{\Omega} \kappa (f_1^+ - \bar{f}_1^+) dz_1 dz_2, \\
	&I_2 = \int_{\Omega} \frac{\sin \theta}{\rho q} \partial_{z_1} \theta dz_1 dz_2, \\
	&I_3 = \int_{\Omega} \left( \frac{\cos \theta}{\rho q} \frac{1-M_+^2}{\rho q^2}
	- \frac{1}{\bar{\rho}_+ \bar{q}_+} \frac{1-\bar{M}_+^2}{\bar{\rho}_+  \bar{q}_+^2} \right)
	\partial_{z_1}p dz_1 dz_2, \\
	&I_4 = \int_{\Omega} \frac{\delta \xi^* - \int_{z_2}^1 \delta \psi'(s) ds}{L-\psi(z_2)}
	\left( \frac{\cos \theta}{\rho q} \frac{1-M_+^2}{\rho q^2} \partial_{z_1} p
	+\frac{\sin \theta}{\rho q} \partial_{z_1}\theta \right) dz_1 dz_2, \\
	&I_5 = \int_{\Omega} \frac{L-z_1}{L-\psi(z_2)} \psi'(z_2) \partial_{z_1} \theta dz_1 dz_2 .
	\end{align*}
	Since $(\delta U, \delta \psi') \in \Sigma(\kappa + \sigma)$, it is easy to estimate that
	\begin{align*}
	& I_1 =  O(1) \kappa (\kappa + \sigma ), \\
	& I_i =  O(1) (\kappa+\sigma)^2, \quad i=2,3,5. \\
	\end{align*}
	For $I_4$, we have
	\begin{align*}
	I_4 &= \delta \xi^* \int_{\Omega} \frac{1}{L-\psi(z_2)} \frac{\cos \theta}{\rho q} \frac{1-M_+^2}{\rho q^2} \partial_{z_1} p dz_1 dz_2 \\
	& \quad +\delta \xi^* \int_{\Omega} \frac{1}{L-\psi(z_2)} \frac{\sin \theta}{\rho q} \partial_{z_1}\theta dz_1 dz_2 \\
	& \quad -\int_\Omega  \frac{\int_{z_2}^1 \delta \psi'(s) ds}{L-\psi(z_2)}
	\left( \frac{\cos \theta}{\rho q} \frac{1-M_+^2}{\rho q^2} \partial_{z_1} p
	+\frac{\sin \theta}{\rho q} \partial_{z_1}\theta \right) dz_1 dz_2 \\
	&=\delta \xi^* \int_{\Omega} \frac{1}{L-\dot{\xi}} \frac{1}{\bar{\rho}_+ \bar{q}_+}
	\frac{1-\bar{M}_+^2}{\bar{\rho}_+ \bar{q}_+^2} \partial_{z_1} \dot{p} dz_1 dz_2
	+\delta \xi^* O(1) (\sigma+\kappa)^{\frac{3}{2}} \\
	& \quad + \delta \xi^* O(1) (\sigma+\kappa)^2 +(\delta \xi^*)^2 O(1) (\sigma +\kappa)
	+O(1) (\sigma + \kappa)^2.
	\end{align*}
	From the equation
	\begin{equation*}
	-\frac{1}{\bar{\rho}_+ \bar{q}_+}
	\frac{1-\bar{M}_+^2}{\bar{\rho}_+ \bar{q}_+^2}
	\partial_{y_1} \dot{p}_+ + \partial_{y_2} \dot{\theta}_+ = \kappa \bar{f}_1^+,
	\end{equation*}
	we know
	\begin{align*}
	\frac{1}{\bar{\rho}_+ \bar{q}_+}
	\frac{1-\bar{M}_+^2}{\bar{\rho}_+ \bar{q}_+^2}
	\int_{\Omega} \partial_{z_1} \dot{p}_+ dz_1 dz_2
	&=\int_{\Omega} (-\kappa \bar{f}_1^+ + \partial_{z_2} \dot{\theta}_+) dz_1 dz_2 \\
	&= -\kappa \bar{f}_1^+ (L-\dot{\xi}) + \int_{\dot{\xi}}^L \sigma \Theta(z_1) dz_1.
	\end{align*}
	Hence
	\begin{align}
	\begin{split}
	I_4 &= \delta \xi^* \left(-\kappa \bar{f}_1^+ + \sigma \frac{1}{L-\bar{\xi_*}}
	\int_{\dot{\xi}}^L \Theta (z_1) \right)
	+\delta \xi^* O(1) (\sigma+\kappa)^{\frac{3}{2}} \\
	& \quad + \delta \xi^* O(1) (\sigma+\kappa)^2 +(\delta \xi^*)^2 O(1) (\sigma +\kappa)
	+O(1) (\sigma + \kappa)^2.
	\end{split}
	\end{align}
	
	Next, the second and third integral of $I$ can be written as
	\begin{align*}
	\int_{\dot{\xi}}^L \delta \theta_4 (z_1 ; \delta \xi^*) dz_1
	&= \sigma \int_{\dot{\xi}}^L \Theta (z_1+\frac{\delta \xi^*}{L-\dot{\xi}} (L-z_1)) dz_1\\
	&= \sigma \int_{\dot{\xi}+\delta \xi^*}^L \Theta(y_1) (1+\frac{\delta \xi^*}{L-\dot{\xi}})dy_1 \\
	&= \sigma \int_{\dot{\xi}}^L \Theta(y_1) dy_1
	- \sigma \int_{\dot{\xi}}^{\dot{\xi}+\delta \xi^*} \Theta(y_1) dy_1\\
	&\quad + \sigma \int_{\dot{\xi}+\delta \xi^*}^L \Theta(y_1) \frac{\delta \xi^*}{L-\dot{\xi}}dy_1 \\
	&= \sigma \int_{\dot{\xi}}^L \Theta(y_1) dy_1
	- \delta \xi^* \left( \sigma \Theta(\dot{\xi}) -
	\sigma \frac{1}{L-\dot{\xi}} \int_{\dot{\xi}}^L \Theta(y_1) dy_1 \right) \\
	&\quad + O(1) \sigma (\delta \xi^*)^2,
	\end{align*}
and
	\begin{align*}
	\frac{1}{\bar{\rho}_+ \bar{q}_+} \frac{1-\bar{M}_+^2}{\bar{\rho}_+\bar{q}_+^2}
	\int_0^1 \delta P(z_2;\delta U) dz_2
	&= \frac{1}{\bar{\rho}_+ \bar{q}_+} \frac{1-\bar{M}_+^2}{\bar{\rho}_+\bar{q}_+^2}
	\int_0^1 P_\mr{e}(z_2) dz_2 \\
	& \quad +\frac{1}{\bar{\rho}_+ \bar{q}_+} \frac{1-\bar{M}_+^2}{\bar{\rho}_+\bar{q}_+^2}
	\int_0^1 P_\mr{e}(Y(L,z_2;\delta U))-P_\mr{e}(z_2) dz_2 \\
	&= \frac{1}{\bar{\rho}_+ \bar{q}_+} \frac{1-\bar{M}_+^2}{\bar{\rho}_+\bar{q}_+^2}
	\int_0^1 P_\mr{e}(z_2) dz_2 + O(1) (\sigma + \kappa)^2.
	\end{align*}
	
	To estimate $g_1^\sharp$, we consider $g_j(j=1,2,3,4)$ first:
	\begin{align*}
	g_j &= \beta_j^+ \delta U - G_j (U, U_-^{(\psi', \xi_*)}) \\
	&= \left(\beta_j^+ \delta U + \beta_j^- \delta U_-^{(\psi', \xi_*)}
	- G_j (U, U_-^{(\psi', \xi_*)})\right) - \beta_j^- \delta U_-^{(\psi', \xi_*)} \\
	&= \frac{1}{2} \int_0^1 D^2 G_j ( \bar{U}_+ + s\delta U; \bar{U}_- + s \delta U_-) ds (\delta U, \delta U_-)^2
	- \beta_j^- (\delta U_-^{(\psi', \xi_*)}- \dot{U}_-(\psi(z_2),z_2)) \\
	& \quad - \beta_j^- (\dot{U}_-(\psi(z_2),z_2)-\dot{U}_- (\xi_*, z_2))
	- \beta_j^- \dot{U}_- (\dot{\xi} + \delta \xi^* , z_2) \\
	&= - \beta_j^- \dot{U}_- (\dot{\xi} + \delta \xi^* , z_2) + O(1)(\kappa + \sigma)^2.
	\end{align*}
	Hence
	\begin{align*}
	g_j^\sharp (\delta U, \delta U_- ; \delta \psi', \delta \xi^*)\mid_{(\dot{\xi},z_2)} =
	\dot{g}_j^\sharp \mid_{(\dot{\xi} + \delta \xi^* ,z_2)} + O(1) (\kappa + \sigma)^2,
	\end{align*}
	and
	\begin{align*}
	\dot{c}=\frac{1}{\bar{\rho}_+ \bar{q}_+} \frac{1-\bar{M}_+^2}{\bar{\rho}_+\bar{q}_+^2}
	\int_{\tilde{\Gamma}_s} (-g_1^{\sharp} (0,\dot{U}_1;0,0) + -\dot{g}_1^{\sharp} (0,\dot{U}_1;0,0)) dz_2
	=O(1)(\kappa + \sigma)^2.
	\end{align*}
	Integrate $g_1^\sharp$ by $z_2$ from 0 to 1 to get
	\begin{align*}
	&\quad \frac{1}{\bar{\rho}_+ \bar{q}_+} \frac{1-\bar{M}_+^2}{\bar{\rho}_+\bar{q}_+^2}
	\int_0^1 g_1^\sharp(\delta U, \delta U_- ; \delta \psi', \delta \xi^*) dz_2 \\
	&= \frac{1}{\bar{\rho}_+ \bar{q}_+} \frac{1-\bar{M}_+^2}{\bar{\rho}_+\bar{q}_+^2}
	\int_0^1 \dot{g}_1^\sharp(\dot{\xi}, z_2) dz_2 \\
	&\quad + \frac{1}{\bar{\rho}_+ \bar{q}_+} \frac{1-\bar{M}_+^2}{\bar{\rho}_+\bar{q}_+^2}
	\int_0^1 \dot{g}_1^\sharp(\dot{\xi}+\delta \xi^*,z_2) -\dot{g}_1^\sharp(\dot{\xi}, z_2) dz_2
	+O(1) (\kappa + \sigma)^2 \\
	&= \frac{1}{\bar{\rho}_+ \bar{q}_+} \frac{1-\bar{M}_+^2}{\bar{\rho}_+\bar{q}_+^2}
	\int_0^1 \dot{g}_1^\sharp(\dot{\xi}, z_2) dz_2 \\
	& \quad + (1-\dot{K}_1) \left( \int_{\dot{\xi}}^{\dot{\xi} + \delta \xi} \sigma \Theta(z_1) dz_1
	-\kappa \bar{f}_1^- \delta \xi^* \right)\\
	& \quad -\frac{1}{\gamma C_v} \frac{1}{\bar{\rho}_- \bar{q}_-} (\frac{\bar{\rho}_+}{\bar{p}_+} \frac{\bar{p}_-}{\bar{\rho}_-} -1 + \dot{K}_1) \kappa \bar{f}_4^- \delta \xi^*
	+O(1)(\kappa + \sigma)^2 \\
	&= \frac{1}{\bar{\rho}_+ \bar{q}_+} \frac{1-\bar{M}_+^2}{\bar{\rho}_+\bar{q}_+^2}
	\int_0^1 \dot{g}_1^\sharp(\dot{\xi}, z_2) dz_2 \\
	&\quad +\delta \xi^* \left( (1-\dot{K}_1) \sigma \Theta(\bar{\xi_*})
	-\kappa \frac{\bar{\rho}_+}{\bar{p}_+} \frac{\bar{p}_-}{\bar{\rho}_-} \bar{f}^-_1
	\right) \\
	&\quad +O(1)(\kappa+\sigma)(\delta \xi^*)^2 + O(1) (\kappa + \sigma)^2.
	\end{align*}
	
	Combining the above calculation of each term of $I$, we have
	\begin{align*}
	& \quad I(\delta \xi^*, \delta U, \delta \psi', \delta U_-, 0) \\
	&= \int_{\tilde{\Omega}} \kappa \bar{f}_1 dz_1 dz_2
	+ \sigma \int_{\dot{\xi}}^L \Theta(z_1) dz_1
	- \frac{1}{\bar{\rho}_+ \bar{q}_+} \frac{1-\bar{M}_+^2}{\bar{\rho}_+\bar{q}_+^2}
	\int_0^1 P_\mr{e}(z_2) dz_2\\
	&\quad+ \frac{1}{\bar{\rho}_+ \bar{q}_+} \frac{1-\bar{M}_+^2}{\bar{\rho}_+\bar{q}_+^2}
	\int_0^1 \dot{g}_1^\sharp(\dot{\xi}, z_2) dz_2 \\
	& \quad +\delta \xi^* \left(\kappa \bar{f}_1^+ - \sigma \frac{1}{L-\bar{\xi_*}}
	\int_{\dot{\xi}}^L \Theta (z_1) \right) - \delta \xi^* \left( \sigma \Theta(\dot{\xi}) -
	\sigma \frac{1}{L-\dot{\xi}} \int_{\dot{\xi}}^L \Theta(y_1) dy_1 \right) \\
	& \quad +\delta \xi^* \left( (1-\dot{K}_1) \sigma \Theta(\bar{\xi_*})
	-\kappa \frac{\bar{\rho}_+}{\bar{p}_+} \frac{\bar{p}_-}{\bar{\rho}_-} \bar{f}^-_1
	\right) \\
	& \quad +O(1)\delta \xi^* (\sigma + \kappa)^{\frac{3}{2}}
	+O(1) (\sigma + \kappa)^2
	+O(1) (\sigma + \kappa)(\delta \xi^*)^2 \\
	&= \delta \xi^* \left(-\sigma \dot{K}_1 \Theta(\dot{\xi})
	+ \kappa \dot{K}_2
	+ O(1) (\sigma + \kappa)^{\frac{3}{2}} \right) \\
	& \quad +O(1) (\sigma + \kappa)^2
	+O(1) (\sigma + \kappa)(\delta \xi^*)^2.
	\end{align*}
	This implies
	\begin{align}\label{DI}
	\frac{\partial I}{\partial (\delta \xi^*)} (0,0,0; \dot{U}_-, \dot{c})
	=-\sigma \dot{K}_1 \Theta(\dot{\xi})
	+ \kappa \dot{K}_2
	+ O(1) (\sigma + \kappa)^{\frac{3}{2}}.
	\end{align}
	If we choose $\sigma$ and $\kappa$ sufficiently small and $-\sigma \dot{K}_1 \Theta(\dot{\xi})
	+ \kappa \dot{K}_2 \neq 0$,
	the existence of the solution $\delta \xi^*$ to the equation $I(\delta \xi^*, \delta U, \delta \psi', \delta U_-, 0)=0$ can be proved by the implicit function theorem. Furthermore,  we have
	\begin{align*}
	|\delta \xi^*| &\leq \frac{C}{\left| -\sigma \dot{K}_1 \Theta(\dot{\xi})
		+ \kappa \dot{K}_2 \right|}
	(\sigma+\kappa)^2
	\leq \frac{C}{\beta_0} (\sigma+\kappa),
	\end{align*}
	which complete the proof of the lemma.
\end{proof}

With the solution $\delta \xi^*$, the solvability of the elliptic system \eqref{pt1}-\eqref{pt6} holds so that we can solve the solution
$(\delta p^*, \delta \theta^*)$ and get the estimate
\begin{align}\label{est-pt}
\begin{split}
\|\delta p^*\|_{W_q^1(\Omega)}+\|\delta \theta^*\|_{W_q^1(\Omega)}
&\leq C\big(\|\tilde{f}_1\|_{L^{\beta}(\Omega)}+\|\tilde{f}_2\|_{L^{\beta}(\Omega)}
+\|\delta \Theta_4\|_{W_{\beta}^{1-\frac{1}{\beta}}(\Gamma_4)} \\
&\quad + \|\delta P\|_{W_{\beta}^{1-\frac{1}{\beta}}(\Gamma_3)}
+\|g_1^\sharp\|_{W_{\beta}^{1-\frac{1}{\beta}}(\Gamma_s)}\big).
\end{split}
\end{align}

To complete the iteration of $(\delta p, \delta \theta)$, we need to give an estimate of
$(\delta p^*-\delta \dot{p}_+, \delta \theta^*-\delta \dot{\theta}_+)$.
\begin{thm}
	Suppose $(\delta U, \delta \psi') \in \mathcal{S}(\displaystyle \frac{1}{2} (\sigma + \kappa)^{3/2}; \dot{U}_+, \dot{\psi}')$ and $\delta \xi^*$ satisfies \eqref{sol}. Then $(\delta p^*-\delta \dot{p}_+, \delta \theta - \delta \dot{\theta}_+)$ satisfies the estimate
	\begin{equation*}
		\|\delta p^*-\delta \dot{p}_+\|_{W_q^1(\Omega)}
		+\|\delta \theta^*- \delta \dot{\theta}_+ \|_{W_q^1(\Omega)}
		\leq \frac{1}{2} (\kappa + \sigma)^{\frac{3}{2}},
	\end{equation*}
as $\sigma$ and $\kappa$ are sufficiently small.
\end{thm}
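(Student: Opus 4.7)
The plan is to exploit the fact that $(\dot{p}_+,\dot{\theta}_+)$ already solves the ``background-coefficient'' elliptic system \eqref{eq:LEuler.6}--\eqref{eq:LEuler.7} with source $(\,\kappa\bar f_1^+,0)$ and with boundary data $\dot\theta_+=0$ on $\tilde\Gamma_2$, $\dot p_+ = \sigma P_\sigma+\kappa P_\kappa$ on $\tilde\Gamma_3$, $\dot\theta_+ = \sigma\Theta$ on $\tilde\Gamma_4$, and $\dot p_+ = \dot g_1^\sharp(\dot\xi,\cdot)$ on $\tilde\Gamma_s$. Subtracting this from the system \eqref{pt1}--\eqref{pt6} satisfied by $(\delta p^*,\delta\theta^*)$, the difference $(\delta p^*-\dot p_+,\,\delta\theta^*-\dot\theta_+)$ solves the \emph{same} first-order elliptic operator with background coefficients, driven by
\begin{equation*}
(\tilde f_1-\kappa\bar f_1^+,\ \tilde f_2)\quad\text{in }\tilde\Omega,
\end{equation*}
and by boundary data $0,\ \delta P-(\sigma P_\sigma+\kappa P_\kappa),\ \delta\Theta_4-\sigma\Theta,\ g_1^\sharp-\dot g_1^\sharp(\dot\xi,\cdot)$ on $\tilde\Gamma_2,\tilde\Gamma_3,\tilde\Gamma_4,\tilde\Gamma_s$ respectively. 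Applying the elliptic estimate \eqref{est-pt} to the difference, it suffices to control each perturbation-difference term in the appropriate norm.

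Next, I would estimate each of these differences using the bookkeeping already performed in the proof of Theorem~\ref{sol thm}. The interior terms $\tilde f_1-\kappa\bar f_1^+$ and $\tilde f_2$ are sums of products in which one factor is of order $(\sigma+\kappa)$ (either $\delta U$, $\sin\theta$, $q\cos\theta-\bar q_+$, $\dot\xi-\psi$, or $\psi'$) and the other factor is a first derivative of an $O(\sigma+\kappa)$ quantity; together with the uniform bound $|\delta\xi^*|\le C_*(\sigma+\kappa)$ from Theorem~\ref{sol thm}, this yields $L^\beta$-bounds of order $(\sigma+\kappa)^2$. For the boundary on $\tilde\Gamma_4$, Taylor expansion gives $\delta\Theta_4(z_1;\delta\xi^*)-\sigma\Theta(z_1)=\sigma[\Theta(z_1+\tfrac{\delta\xi^*}{L-\dot\xi}(L-z_1))-\Theta(z_1)]=O(\sigma|\delta\xi^*|)=O((\sigma+\kappa)^2)$. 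On $\tilde\Gamma_3$, $\delta P-(\sigma P_\sigma+\kappa P_\kappa)=P_\mathrm{e}(Y(L,z_2;\delta U))-P_\mathrm{e}(z_2)$ is controlled by $\|P_\mathrm{e}'\|_\infty\cdot|Y-z_2|=O((\sigma+\kappa)^2)$, using that $Y-z_2=O(\sigma+\kappa)$ and $P_\mathrm{e}$ itself is $O(\sigma+\kappa)$. On $\tilde\Gamma_s$, the identity
\begin{equation*}
g_1^\sharp\bigl|_{(\dot\xi,z_2)}=\dot g_1^\sharp\bigl|_{(\dot\xi+\delta\xi^*,z_2)}+O((\sigma+\kappa)^2),
\end{equation*}
derived inside the proof of Theorem~\ref{sol thm}, together with $|\dot g_1^\sharp(\dot\xi+\delta\xi^*,\cdot)-\dot g_1^\sharp(\dot\xi,\cdot)|=O(|\delta\xi^*|(\sigma+\kappa))$, gives a boundary-data bound of $O((\sigma+\kappa)^2)$ in $W_\beta^{1-1/\beta}(\tilde\Gamma_s)$.

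Plugging these estimates into \eqref{est-pt} (applied to the difference) produces
\begin{equation*}
\|\delta p^*-\dot p_+\|_{W_\beta^1(\Omega)}+\|\delta\theta^*-\dot\theta_+\|_{W_\beta^1(\Omega)}\le C(\sigma+\kappa)^2,
\end{equation*}
and since $C(\sigma+\kappa)^2\le\tfrac12(\sigma+\kappa)^{3/2}$ whenever $(\sigma+\kappa)^{1/2}\le\tfrac{1}{2C}$, the desired bound follows by shrinking $\sigma+\kappa$. The main obstacle is the boundary term on $\tilde\Gamma_s$: one must carefully track how $g_1^\sharp$ transitions to $\dot g_1^\sharp$ evaluated at the \emph{shifted} point $\dot\xi+\delta\xi^*$, as well as the dependence of $U_-^{(\psi',\xi_*)}$ on the perturbed shock location, so that the linear-in-$\delta\xi^*$ contribution is accounted for and only quadratic errors survive. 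A secondary subtlety is that $\delta\xi^*$ is itself \emph{linear}, not $3/2$-order, in $\sigma+\kappa$; the estimate works because $\delta\xi^*$ always appears multiplied by another $O(\sigma+\kappa)$ factor, producing the required $O((\sigma+\kappa)^2)$ bound that beats $(\sigma+\kappa)^{3/2}$ for small data.
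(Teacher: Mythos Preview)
Your proposal is correct and follows essentially the same approach as the paper: subtract the linearized system solved by $(\dot p_+,\dot\theta_+)$ from \eqref{pt1}--\eqref{pt6}, apply the elliptic estimate \eqref{est-pt} to the difference, and then bound each of $\|\tilde f_1-\kappa\bar f_1^+\|_{L^\beta}$, $\|\tilde f_2\|_{L^\beta}$, $\|\delta\Theta_4-\sigma\Theta\|$, $\|\delta P-P_{\mr e}\|$, and $\|g_1^\sharp-\dot g_1^\sharp\|$ by $C(\sigma+\kappa)^2$ using the same ingredients (Taylor expansion in $\delta\xi^*$, the identity $g_1^\sharp|_{(\dot\xi,z_2)}=\dot g_1^\sharp|_{(\dot\xi+\delta\xi^*,z_2)}+O((\sigma+\kappa)^2)$ from the proof of Theorem~\ref{sol thm}, and $|\delta\xi^*|\le C_*(\sigma+\kappa)$). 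Your remarks on the two subtleties---the shifted evaluation point on $\tilde\Gamma_s$ and the fact that $\delta\xi^*$ is only $O(\sigma+\kappa)$ but always appears paired with another small factor---match exactly what the paper relies on.
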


\begin{proof}
	Taking the difference of the equations \eqref{pt1}-\eqref{pt6} with the corresponding equations of the initial approximating solution. From the estimate \eqref{est-pt} we can obtain
	\begin{align}
	\begin{split}
	&\quad \|\delta p^*-\delta \dot{p}_+\|_{W_q^1(\Omega)}
	+\|\delta \theta^*- \delta \dot{\theta}_+ \|_{W_q^1(\Omega)}\\
	&\leq C\big(\|\tilde{f}_1-\kappa \bar{f}_1^+\|_{L^q(\Omega)}
	+\|\tilde{f}_2\|_{L^{\beta}(\Omega)}
	+\|\delta \Theta_4-\sigma \Theta\|_{W_{\beta}^{1-\frac{1}{\beta}}(\Gamma_4)} \\
	&\quad + \|\delta P-P_\mr{e}\|_{W_{\beta}^{1-\frac{1}{\beta}}(\Gamma_3)}
	+\|g_1^\sharp-\dot{g}_1^{\sharp}\|_{W_{\beta}^{1-\frac{1}{\beta}}(\Gamma_s)}\big).
	\end{split}
	\end{align}
	
	Expand the first term in the right hand side:
	\begin{align*}
	\|\tilde{f}_1-\kappa \bar{f}_1^+\|_{L^\beta(\Omega)}
	&= \left\| \frac{\sin \theta}{\rho q} \partial_{z_1} \theta +
	\left( \frac{\cos \theta}{\rho q} \frac{1-M_+^2}{\rho q^2}
	- \frac{1}{\bar{\rho}_+ \bar{q}_+} \frac{1-\bar{M}_+^2}{\bar{\rho}_+  \bar{q}_+^2} \right)
	\partial_{z_1}p \right\|_{L^\beta(\Omega)} \\
	&\quad + \left\| \kappa (f_1^+ - \bar{f}_1^+) \right\|_{L^\beta(\Omega)}
	+\left\| \frac{L-z_1}{L-\psi(z_2)} \psi'(z_2) \partial_{z_1} \theta \right\|_{L^\beta(\Omega)} \\
	&\quad + \left\| \frac{\delta \xi^* - \int_{z_2}^1 \delta \psi'(s) ds}{L-\psi(z_2)}
	\left( \frac{\cos \theta}{\rho q} \frac{1-M_+^2}{\rho q^2} \partial_{z_1} p
	+\frac{\sin \theta}{\rho q} \partial_{z_1}\theta \right) \right\|_{L^{\beta}(\Omega)}\\
	&\leq C\| \delta U \|_{L^\infty} \| (\delta \theta, \delta p) \|_{W_q^1(\Omega)}
	+C \kappa \| \delta U \|_{L^\beta} \\
	&\quad + C(\dot{\xi}) |\delta \xi^*| \| (\delta \theta, \delta p) \|_{W_q^1(\Omega)}
	+ C(\dot{\xi}) \| \delta \psi' \|_{L^\infty} \| (\delta \theta, \delta p) \|_{W_q^1(\Omega)} \\
	&\leq C (\sigma + \kappa)^2.
	\end{align*}
	Similarly, it holds that $ \|\tilde{f}_2\|_{L^{\beta}(\Omega)} \leq C (\sigma + \kappa)^2 $.
	
	In the proof of Theorem \ref{sol thm}, we have shown that
	\begin{align*}
	\begin{split}
	\|g_1^\sharp-\dot{g}_1^{\sharp}\|_{W_{\beta}^{1-\frac{1}{\beta}}(\Gamma_s)}
	&\leq \| \dot{g}_1^\sharp(\dot{\xi}, \cdot)
	-\dot{g}_1^{\sharp} ( \dot{\xi} + \delta \xi^*, \cdot )\|_{W_{\beta}^{1-\frac{1}{\beta}}(\Gamma_s)}
	+ O(1) (\kappa + \sigma)^2 \\
	&\leq C |\delta \xi^*| \| \partial_{z_1}\dot{U}_- \|_{C^{1,\alpha}(\Omega_L)}
	+ O(1) (\kappa + \sigma)^2 \\
	&\leq C (\sigma + \kappa)^2,
	\end{split}\\
	\begin{split}
	\|\delta \Theta_4-\sigma \Theta\|_{W_{\beta}^{1-\frac{1}{\beta}}(\Gamma_4)}
	&\leq \sigma \| \Theta(z_1+\frac{L-z_1}{L-\xi_*} \delta \xi^*) - \Theta(z_1) \|
	_{W_{\beta}^{1-\frac{1}{\beta}}(\Gamma_4)} \\
	&\leq C \sigma \| \Theta \|_{C^{1,\alpha}(\Gamma_s)}
	| \delta \xi^* | \\
	&\leq  C \sigma (\kappa + \sigma),
	\end{split}\\
	\|\delta P-P_\mr{e}\|_{W_{\beta}^{1-\frac{1}{\beta}}(\Gamma_3)}
	&\leq C (\kappa + \sigma)^2.
	\end{align*}
	
	Combining the above estimate, we obtain that there exists a constant $C_1$ such that
	\begin{align*}
	\|\delta p^*-\delta \dot{p}_+\|_{W_q^1(\Omega)}
	+\|\delta \theta^*- \delta \dot{\theta}_+ \|_{W_q^1(\Omega)}
	\leq C_1 (\kappa + \sigma)^2 \leq \frac{1}{2} (\kappa + \sigma)^{\frac{3}{2}},
	\end{align*}
	for sufficiently small $\kappa$ and $\sigma$.
\end{proof}

\subsubsection{Solve for $\delta S^*$ and $\delta Z^*$}
Consider the equations
\begin{align*}
&\partial_{z_1} (\delta S^*) = \tilde{f}_4 (\delta U; \delta \psi', \delta \xi^*),
& &\text{in $\tilde{\Omega}$}, \\
&\delta S^*(\dot{\xi}, z_2) = g_3^\sharp  (\delta U, \delta U_-; \delta \psi', \delta \xi^*),
& &\text{on $\tilde{\Gamma}_s$},
\end{align*}
and
\begin{align*}
&\partial_{z_1} (\delta Z^*) = \tilde{f}_5 (\delta U; \delta \psi', \delta \xi^*),
& & \text{in $\tilde{\Omega}$},\\
&\delta Z^*(\dot{\xi}, z_2) = g_4^\sharp (\delta U, \delta U_-; \delta \psi', \delta \xi^*),
& &\text{on $\tilde{\Gamma}_s$},
\end{align*}
which gives the solution $(\delta S^*, \delta Z^*)$:
\begin{align}
&\delta S^*(z_1, z_2)=g_3^\sharp(z_2) + \int_{\dot{\xi}}^{z_1} \tilde{f}_4(s,z_2) ds,\\
&\delta Z^*(z_1, z_2)=g_4^\sharp(z_2) + \int_{\dot{\xi}}^{z_1} \tilde{f}_5(s,z_2) ds.
\end{align}
Similar to the estimate of $\|g_1^\sharp-\dot{g}_1^{\sharp}\|_{W_{\beta}^{1-\frac{1}{\beta}}(\Gamma_s)}$, we obtain
\begin{align*}
\| \delta S^* - \delta \dot{S}_+  \|_{W_{\beta}^{1-\frac{1}{\beta}}(\Gamma_s)}
&\leq \| \dot{g}_3^\sharp(\dot{\xi}, \cdot)
-\dot{g}_3^{\sharp} ( \dot{\xi} + \delta \xi^*, \cdot )\|_{W_{\beta}^{1-\frac{1}{\beta}}(\Gamma_s)}
+ O(1) (\kappa + \sigma)^2\\
&\leq C(\kappa + \sigma)^2 \leq \frac{1}{2} (\kappa + \sigma)^{\frac{3}{2}},
\end{align*}
and
\begin{align*}
\| \delta S^* - \delta \dot{S}_+  \|_{C^0(\Omega)}
&\leq \| \dot{g}_3^\sharp(\dot{\xi}, \cdot)
-\dot{g}_3^{\sharp} ( \dot{\xi} + \delta \xi^*, \cdot )\|_{W_{\beta}^{1-\frac{1}{\beta}}(\Gamma_s)} \\
&\quad +\kappa \| \int_{\xi^*}^{z_1} (\tilde{f}_4-\bar{f}_4^+) (s,z_2) ds  \|_{C^0(\Omega)} \\
&\quad +\| \frac{\dot{\xi}-\psi(z_2)}{L-\psi(z_2)} \int_{\xi^*}^{z_1} \partial_{z_1}(\delta S) (s,z_2) ds  \|_{C^0(\Omega)}\\
&\leq C(\kappa +\sigma)^2
+ C \kappa \|\delta U\|_{W^1_q(\Omega)}
+ C |\delta \xi^*| \| \delta S \|_{C^0(\Omega)}\\
&\leq C(\kappa + \sigma)^2 \leq \frac{1}{2} (\kappa + \sigma)^{\frac{3}{2}},
\end{align*}
for sufficiently small $\kappa$ and $\sigma$.

Using the same estimate, we can also get
\begin{align*}
\| \delta Z^* - \delta \dot{Z}_+  \|_{W_{\beta}^{1-\frac{1}{\beta}}(\Gamma_s)}
+ \| \delta Z^* - \delta \dot{Z}_+  \|_{C^0(\Omega)}
\leq C(\kappa + \sigma)^2 \leq \frac{1}{2} (\kappa + \sigma)^{\frac{3}{2}},
\end{align*}
for sufficiently small $\kappa$ and $\sigma$.

\subsubsection{Solve for $\delta q^*$}
We will use the equations of the divergence form to solve $\delta q^*$:
\begin{align*}
&\partial_{z_1} (\bar{q}_+ \delta q^* + \frac{1}{\bar{\rho}_+} \delta p^* + \bar{T}_+ \delta S^* +q_{\mr{e}} \delta Z^*)
= \partial_{z_1} \tilde{f}_3 (\delta U; \delta \psi', \delta \xi^*),
& &\text{in $\tilde{\Omega}$}, \\
&\delta q^*(\dot{\xi}, z_2) = g_2^\sharp  (\delta U, \delta U_-; \delta \psi', \delta \xi^*),
& &\text{on $\tilde{\Gamma}_s$}.
\end{align*}
Integrating the equations from $\dot{\xi}$ to 1, we have
\begin{align*}
&\quad (\bar{q}_+ \delta q^* + \frac{1}{\bar{\rho}_+} \delta p^* + \bar{T}_+ \delta S^* +q_{\mr{e}} \delta Z^*) (z_1, z_2) \\
&= (\bar{q}_+ \delta q^* + \frac{1}{\bar{\rho}_+} \delta p^* + \bar{T}_+ \delta S^* +q_{\mr{e}} \delta Z^*)
(\dot{\xi}, z_2)
- \tilde{f}_3 (\dot{\xi} , z_2) + \tilde{f}_3 (z_1, z_2).
\end{align*}
Since
\begin{align*}
\tilde{f}_3 (\delta U) + \Phi(\bar{U}_+)
&= (\bar{q}_+ \delta q^* + \frac{1}{\bar{\rho}_+} \delta p^* + \bar{T}_+ \delta S^* +\beta \delta Z^*) - \Phi(U) + \Phi(\bar{U}_+) \\
&= -\int_0^1 D_u^2 \Phi(\bar{U}_+ + t\delta U) dt (\delta U)^2,
\end{align*}
we can obtain the estimate
\begin{align*}
\|\tilde{f}_3 (\delta U) + \Phi(\bar{U}_+) \|_{C^0(\Omega)} \leq C (\kappa + \sigma)^2 .
\end{align*}
Hence $\delta q^*- \delta \dot{q}_+$ can be estimated as
\begin{align*}
&\quad \| \delta q^* - \delta \dot{q}_+  \|_{W_{\beta}^{1-\frac{1}{\beta}}(\Gamma_s)}
+ \| \delta q^* - \delta \dot{q}_+  \|_{C^0(\Omega)} \\
&\leq C \bigg( \sum_{i=1}^4 \| \dot{g}_i^\sharp (\dot{\xi}, \cdot)
-\dot{g}_i^{\sharp} ( \dot{\xi} + \delta \xi^*, \cdot )\|_{W_{\beta}^{1-\frac{1}{\beta}}(\Gamma_s)}\\
&\quad + \| (\delta p^*-\delta \dot{p}_+, \delta Z^*-\delta \dot{Z}_+, \delta S^*-\delta \dot{S}_+) \|_{C^0(\Omega)} \\
&\quad + \|\tilde{f}_3 (\delta U) + \Phi(\bar{U}_+) \|_{C^0(\Omega)}\bigg) \\
&\leq C (\kappa + \sigma)^2 \leq \frac{1}{2} (\kappa + \sigma)^{\frac{3}{2}}.
\end{align*}

\subsubsection{Update the shock front $(\delta \psi^*)'$}
From \eqref{its2}, we can update $(\delta \psi^*)'$ by
\begin{align*}
(\delta \psi^*)' = \frac{1}{[p]} (\beta_4^+ \delta U^* - g_5).
\end{align*}
Hence we have
\begin{align*}
\| (\delta \psi^*)' \|_{W_{\beta}^{1-\frac{1}{\beta}}(\Gamma_s)}
&\leq C( \| \delta U^* \|_{W_{\beta}^{1-\frac{1}{\beta}}(\Gamma_s)}
+\| g_5 \|_{W_{\beta}^{1-\frac{1}{\beta}}(\Gamma_s)} ) \\
\| (\delta \psi^*)' - (\delta \dot{\psi})' \|_{W_{\beta}^{1-\frac{1}{\beta}}(\Gamma_s)}
&\leq C( \| \delta U^* - \delta \dot{U}_+ \|_{W_{\beta}^{1-\frac{1}{\beta}}(\Gamma_s)}
+ \| g_5 - \dot{g}_5 \|_{W_{\beta}^{1-\frac{1}{\beta}}(\Gamma_s)} ).
\end{align*}
Similar to the computation of $g_i, i=1,2,3,4$,
\begin{align*}
g_5=-\beta_5^- \dot{U}_-(\dot{\xi} + \delta \xi^*, z_2) + O(1) (\kappa + \sigma)^2.
\end{align*}
Therefore
\begin{align*}
\| (\delta \psi^*)' -  (\delta \dot{\psi})' \|_ {W_{\beta}^{1-\frac{1}{\beta}}(\Gamma_s)}
&\leq C |\delta \xi^*| \| \partial_{z_1} U^- \|_{C^{1,\alpha}(\Omega_L)}
+ C(\kappa + \sigma)^2 \\
&\leq C(\kappa + \sigma)^2 \leq \frac{1}{2} (\kappa + \sigma)^{\frac{3}{2}}.
\end{align*}

\subsubsection{The contraction of the iteration mapping}

Combining the above four steps, we have proved that the iteration mapping
$	J' : \mathcal{S}(\displaystyle \frac{1}{2} (\sigma + \kappa)^{3/2}; \dot{U}_+, \dot{\psi}')
\to \mathcal{S}(\displaystyle \frac{1}{2} (\sigma + \kappa)^{3/2}; \dot{U}_+, \dot{\psi}')$ which maps $(\delta U; \delta \psi')$ to
$(\delta U^*; (\delta \psi^*)') $ is well defined for sufficiently small $\kappa$ and $\sigma$ . To find the fixed point of the iteration, we will prove that the iteration mapping
$J'$ is a contractive mapping.

\begin{thm}
	There exists small constants $\sigma_0, \kappa_0$ such that for any $\sigma \in (0,\sigma_0)$ and $\kappa \in (0,\kappa_0)$, the iteration mapping
	$	J' : \mathcal{S}(\displaystyle \frac{1}{2} (\sigma + \kappa)^{3/2}; \dot{U}_+, \dot{\psi}')
	\to \mathcal{S}(\displaystyle \frac{1}{2} (\sigma + \kappa)^{3/2}; \dot{U}_+, \dot{\psi}')$ is contractive.
\end{thm}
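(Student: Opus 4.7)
The plan is to apply $J'$ to two arbitrary points $(\delta U^{(i)},\delta\psi'^{(i)})\in\mathcal{S}(\tfrac{1}{2}(\sigma+\kappa)^{3/2};\dot U_+,\dot\psi')$ ($i=1,2$), to denote the images by $(\delta U^{*,(i)},\delta\xi^{*,(i)},(\delta\psi^*)'^{(i)})$, and to mirror the four steps of Sections 4.3.1--4.3.4 now applied to the differences $\Delta U:=\delta U^{(1)}-\delta U^{(2)}$, $\Delta\psi':=\delta\psi'^{(1)}-\delta\psi'^{(2)}$, $\Delta\xi^*$, $\Delta U^*$, $(\Delta\psi^*)'$. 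The target is a Lipschitz estimate of the form
\begin{equation*}
\|\Delta U^*\|_{(\Omega;\Gamma_s)}+\|(\Delta\psi^*)'\|_{W^{1-1/\beta}_\beta(\Gamma_s)}
\leq C_0(\sigma+\kappa)\Bigl(\|\Delta U\|_{(\Omega;\Gamma_s)}+\|\Delta\psi'\|_{W^{1-1/\beta}_\beta(\Gamma_s)}\Bigr),
\end{equation*}
with $C_0$ depending on $\bar U_\pm,L,\dot\xi,\alpha,\beta,\beta_0$; then choosing $\sigma_0,\kappa_0$ with $C_0(\sigma_0+\kappa_0)\leq\tfrac{1}{2}$ will yield contraction and complete the proof.

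The first step, and the main obstacle, is the estimate of $\Delta\xi^*$. Both $\delta\xi^{*,(i)}$ are defined implicitly by $I(\delta\xi^{*,(i)},\delta U^{(i)},\delta\psi'^{(i)},\delta U_-,0)=0$. Subtracting and using the expansion leading to \eqref{DI}, I would obtain a scalar identity of the shape
\begin{equation*}
\bigl(-\sigma\dot K_1\Theta(\dot\xi)+\kappa\dot K_2+O((\sigma+\kappa)^{3/2})\bigr)\,\Delta\xi^*
= I(\delta\xi^{*,(2)},\delta U^{(1)},\delta\psi'^{(1)},\cdot,0)-I(\delta\xi^{*,(2)},\delta U^{(2)},\delta\psi'^{(2)},\cdot,0).
\end{equation*}
Every contribution to $I$ that depends on $(\delta U,\delta\psi')$ is either quadratic (because the linear part was already cancelled in the construction of the initial approximation) or carries a $\kappa$ prefactor; hence its Lipschitz constant in $(\delta U,\delta\psi')$ on $\mathcal{S}$ is $O(\sigma+\kappa)$. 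Combined with the lower bound $\geq\beta_0(\sigma+\kappa)$ on the coefficient on the left, supplied by hypothesis \eqref{admissble condition}, this gives $|\Delta\xi^*|\leq(C/\beta_0)\bigl(\|\Delta U\|_{(\Omega;\Gamma_s)}+\|\Delta\psi'\|_{W^{1-1/\beta}_\beta(\Gamma_s)}\bigr)$. The delicate point is that the $(\sigma+\kappa)$ factors in numerator and denominator cancel exactly, so $\Delta\xi^*$ is Lipschitz in the inputs with a constant \emph{independent} of $\sigma+\kappa$; therefore the required contraction cannot emerge from the $\delta\xi^*$ step alone and must be supplied by the downstream solution operators.

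Next, for $(\Delta p^*,\Delta\theta^*)$, I would take the difference of the two elliptic systems \eqref{pt1}--\eqref{pt6} and apply the $W^1_\beta$ estimate \eqref{est-pt}. Each of $\tilde f_1,\tilde f_2,\delta\Theta_4,\delta P,g_1^\sharp$ is either quadratic in $(\delta U,\delta\psi',\delta\xi^*)$ or carries a $\kappa$ prefactor, so its Lipschitz constant on $\mathcal{S}$ in $(\delta U,\delta\psi')$ is $O(\sigma+\kappa)$; the contribution from $\Delta\xi^*$ inherits the same smallness through the a priori bound $|\delta\xi^{*,(i)}|\leq C(\sigma+\kappa)$ from Theorem~\ref{sol thm}. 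This should yield $\|\Delta p^*\|_{W^1_\beta(\Omega)}+\|\Delta\theta^*\|_{W^1_\beta(\Omega)}\leq C(\sigma+\kappa)\bigl(\|\Delta U\|_{(\Omega;\Gamma_s)}+\|\Delta\psi'\|_{W^{1-1/\beta}_\beta(\Gamma_s)}\bigr)$. The transport steps for $\Delta S^*$ and $\Delta Z^*$ (Section 4.3.2), the algebraic recovery of $\Delta q^*$ through the integrated Bernoulli-type identity for $\tilde f_3$ (Section 4.3.3), and the boundary update $(\Delta\psi^*)'=\frac{1}{[\bar p]}\bigl(\beta_5^+\cdot\Delta U^*-\Delta g_5\bigr)$ (Section 4.3.4) all follow the same template and produce $O(\sigma+\kappa)$ Lipschitz bounds in the relevant norms on $\Omega$ and on $\Gamma_s$. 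Summing these bounds supplies the target inequality; Banach's fixed point theorem applied to the closed set $\mathcal{S}(\tfrac{1}{2}(\sigma+\kappa)^{3/2};\dot U_+,\dot\psi')$ then produces a unique fixed point, which is the shock solution promised by Theorem~\ref{Main Theorem}. The recurring technical subtlety to watch is the bookkeeping in Step~1: one must confirm that \emph{every} input-dependent contribution to $I$ genuinely carries the $(\sigma+\kappa)$ factor, since any purely linear-order Lipschitz dependence on $(\delta U,\delta\psi')$ would destroy the cancellation and block contraction.
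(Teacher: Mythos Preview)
Your proposal is correct and follows essentially the same route as the paper: estimate $\Delta\xi^*$ from the difference of the solvability identities (using the lower bound \eqref{admissble condition} on the coefficient and the $O(\sigma+\kappa)$ size of $\nabla_{(\delta U,\delta\psi')}I$), then run the four solution steps on the differences to harvest the small Lipschitz factor. The paper records the final contraction rate as $O((\sigma+\kappa)^{1/2})$ rather than your $O(\sigma+\kappa)$, but gives no detailed computation; your sharper claim is plausible and in any case either suffices. One small imprecision: the smallness of the $\Delta\xi^*$-contribution to $g_1^\sharp,\delta\Theta_4$, etc.\ comes not from the a priori bound $|\delta\xi^{*,(i)}|\le C(\sigma+\kappa)$ but from the fact that the \emph{coefficients} of $\delta\xi^*$ in those functions (namely $\partial_{z_1}\dot U_-$ and $\sigma\Theta'$) are themselves $O(\sigma+\kappa)$; this is the mechanism the paper uses as well.
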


\begin{proof}
	Assume that $(\delta U_k; \delta \psi'_k)
	\in  \mathcal{S}(\displaystyle \frac{1}{2} (\sigma + \kappa)^{3/2}; \dot{U}_+, \dot{\psi}'), k=1,2 $.
	According to the discussion in the previous sections, there exists $\delta \xi^*_k$ satisfying \eqref{sol} and
	\begin{align*}
	(\delta U^*_k; (\delta \psi^*_k)')
	=J'(\delta U_k; \delta \psi'_k) \in  \mathcal{S}(\displaystyle \frac{1}{2} (\sigma + \kappa)^{3/2}; \dot{U}_+, \dot{\psi}'), k=1,2.
	\end{align*}
	To estimate $\| \delta U^*_1 - \delta U^*_2 \|_{(\Omega, \Gamma_s)}$ and
	$ \| (\delta \psi^*_1)' -(\delta \psi^*_1)' \|_{W_{\beta}^{1-\frac{1}{\beta}}(\Gamma_s)} $,
	we need to estimate $| \delta \xi^*_2 - \delta \xi^*_1 |$ first.
	
	Since $\delta \xi^*_1, \delta \xi^*_2$ both satisfy the solvability condition \eqref{sol}, it implies that
	\begin{align*}
	0&= I(\delta \xi_2^*; \delta U_2, \delta \psi'_2; \delta U_-, 0)
	- I(\delta \xi_1^*; \delta U_1, \delta \psi'_1; \delta U_-, 0) \\
	&= \int_0^1 \frac{\partial I}{\partial (\delta \xi^*)}
	(\delta \xi^*_t; \delta U_2, \delta \psi_2'; \delta U_-, 0)dt \cdot
	(\delta \xi^*_2 - \delta \xi^*_1) \\
	& \quad + \int_0^1 \nabla_{(\delta U, \delta \psi')}
	I(\delta \xi^*_1; \delta U_t, \delta \psi_t'; \delta U_-, 0)dt \cdot
	(\delta U^*_1 - \delta U^*_2, (\delta \psi^*_1)' -(\delta \psi^*_1)'),
	\end{align*}
	where
	$\delta \xi^*_t = t \delta \xi^*_2 + (1-t) \delta \xi^*_1$,
	$\delta U_t = t \delta U_2 + (1-t) \delta U_1$ and
	$\delta \psi_t' = t \delta \psi_2' + (1-t) \delta \psi_1'$.
	
	Similar to the computation of \eqref{DI},
	\begin{align*}
	&\quad \frac{\partial I}{\partial (\delta \xi^*)}
	(\delta \xi^*_t; \delta U_2, \delta \psi_2'; \delta U_-, 0)\\
	&=\frac{\partial I}{\partial (\delta \xi^*)}
	(0; 0, 0; \delta \dot{U}_-, \dot{c})
	+ \int_0^1 \nabla_{(\delta \xi^*, ; \delta U, \delta \psi'; \delta U_-, c)}
	\frac{\partial I}{\partial (\delta \xi^*)}
	(s\delta \xi^*_t; s\delta U_2, s\delta \psi_2'; \delta U_-^s, (1-s)\dot{c})ds \\
	&= (-\sigma \dot{k} \Theta(\dot{\xi})
	+ \kappa (\bar{f}_1^+ - \frac{\bar{\rho}_+}{\bar{p}_+} \frac{\bar{p}_-}{\bar{\rho}_-} \bar{f}^-_1)
	+ O(1) (\sigma + \kappa)^{\frac{3}{2}}) + O(1) (\sigma + \kappa)^2,
	\end{align*}
	and
	\begin{align*}
	\nabla_{(\delta U, \delta \psi')}
	I(\delta \xi^*_1; \delta U_t, \delta \psi_t'; \delta U_-, 0) = O(1) (\sigma+\kappa).
	\end{align*}
	Hence as $\sigma$ and $\kappa$ are sufficiently small and \eqref{admissble condition} holds,
	\begin{align}\label{xi_control}
	| \delta \xi^*_2 - \delta \xi^*_1 |
	\leq C\left( \| \delta U^*_1 - \delta U^*_2 \|_{(\Omega, \Gamma_s)}
	+ \| (\delta \psi^*_1)' -(\delta \psi^*_1)' \|_{W_{\beta}^{1-\frac{1}{\beta}}(\Gamma_s)} \right),
	\end{align}
	where $C$ depends on $\beta_0$ and the background solution.
	
	Substituting $(\delta U_k^*, (\delta \psi^*_k)')$ and $(\delta U_k, \delta \psi_k')$ into \eqref{it1}-\eqref{its2} for $k=1,2$ and taking the difference, through an analogous computation with the help of \eqref{xi_control}, we can get the estimate
	\begin{align*}
	&\quad \| \delta U^*_1 - \delta U^*_2 \|_{(\Omega, \Gamma_s)}
	+ \| (\delta \psi^*_1)' -(\delta \psi^*_1)' \|_{W_{\beta}^{1-\frac{1}{\beta}}(\Gamma_s)}\\
	&\leq O((\sigma+ \kappa)^{\frac{1}{2}})
	\left(\| \delta U_1 - \delta U_2 \|_{(\Omega, \Gamma_s)}
	+ \| (\delta \psi_1)' -(\delta \psi_1)' \|_{W_{\beta}^{1-\frac{1}{\beta}}(\Gamma_s)}\right)\\
	&\leq \frac{1}{2}
	\left(\| \delta U_1 - \delta U_2 \|_{(\Omega, \Gamma_s)}
	+ \| (\delta \psi_1)' -(\delta \psi_1)' \|_{W_{\beta}^{1-\frac{1}{\beta}}(\Gamma_s)}\right),
	\end{align*}
	for sufficiently small $\sigma$ and $\kappa$.
	
	Therefore, $J'$ is a contractive map.
\end{proof}

Using the contraction mapping theorem, the iteration mapping $J'$ has a unique fixed point $(\delta U, \delta \psi')$ which is a solution to Problem \ref{problem: Lagrange}  and
has the estimate
\begin{align*}
\quad \| \delta U_+ - \dot{U}_+ \|_{(\Omega, \Gamma_s)}
+ \| \delta \psi' -\dot{\psi}' \|_{W_{\beta}^{1-\frac{1}{\beta}}(\Gamma_s)}
\leq \frac{1}{2} (\kappa + \sigma)^{\frac{3}{2}}.
\end{align*}
Therefore Theorem \ref{Main Theorem} is proved.

\section*{Acknowlegements}
The research of the paper was supported by Natural Science Foundation of China under Grant Nos. 11971308 and 11631008. The research of Qin Zhao was also supported in part by Natural Science Foundation of China under Grant Nos. 12101471.

\end{document}